\documentclass[12pt]{amsart}
\usepackage[T1]{fontenc}
\usepackage[a4paper,margin=1in]{geometry}
\usepackage{graphicx}
\usepackage{amssymb}
\usepackage{stmaryrd}
\usepackage{enumitem}
\usepackage{tikz}
\usepackage{float}
\usepackage{url}
\usepackage{mathtools}
\usepackage{amsmath,amsfonts,amsthm}
\usepackage{mathrsfs}

\newtheorem{theorem}{Theorem}[section]
\newtheorem{lemma}[theorem]{Lemma}
\newtheorem{proposition}[theorem]{Proposition}
\newtheorem{corollary}[theorem]{Corollary}
\newtheorem{conjecture}[theorem]{Conjecture}
\newtheorem{definition}{Definition}
\newtheorem{remark}{Remark}
\newtheorem{problem}{Problem}
\usepackage{multirow}

\usepackage{pgfplots}
\pgfplotsset{compat=1.18}
\usepgfplotslibrary{fillbetween}
\usepackage{tikz}
\usetikzlibrary{shapes.geometric, positioning, calc, intersections}

\newcommand{\TT}{\mathcal{T}}
\newcommand{\RR}{\mathcal{MR}}

\usepackage[colorlinks,
linkcolor=blue,
anchorcolor=blue,
citecolor=blue
]{hyperref}

\author{J. Hamoud}
\address{\textbf{Jasem Hamoud:} Department of Discrete Mathematics, Moscow Institute of Physics and Technology}
\email{hamoud.math@gmail.com}
\thanks{Corresponding author.}

\author{D. Abdullah}
\address{\textbf{Duaa Abdullah:} Department of Discrete Mathematics, Moscow Institute of Physics and Technology}
\email{duaa1992abdullah@gmail.com}
\thanks{ }

\title[$\sigma$-Irregularity of Trees]{$\sigma$-Irregularity of Trees with Prescribed Maximum Degree: \\
A Majorization--Duality--Stability Framework}

\date{}
\begin{document}

\begin{abstract}
Trees of maximum $\sigma$-irregularity subject to a prescribed maximum degree $\Delta$ have been characterized for $\Delta=4$ and $\Delta=5$, with the extension to arbitrary $\Delta\geqslant 3$. This paper develops a unified framework for this class of problems built on three pillars. A majorization-theoretic reformulation that
decomposes $\sigma$-extremization into a Schur-convex optimization  over degree sequences and a rearrangement type optimization over
tree realizations, valid for every $\Delta\geqslant 3$. We
establish a duality between the maximization and minimization problems, alongside the general $\Delta$ closed form for
$\sigma_{\max}(n,\Delta)$. A stability result showing that the optimality gap between extremal and near extremal trees is bounded independently of $n$ and grows as $\Theta(\Delta^3)$. 
\end{abstract}

\maketitle

\noindent\rule{15.9cm}{1.0pt}

\noindent
\textbf{
Keywords:} $\sigma$-irregularity; extremal trees; maximum
degree; majorization; topological index; stability analysis; graph invariants.

\medskip

\noindent
{\bf
MSC 2020:} 05C05; 05C07; 05C35; 05C92; 26D15

\medskip

\noindent
{\bf
UDC:} 519.172.1

\noindent\rule{15.9cm}{1.0pt}

%======================
\section{Introduction}~\label{sec01intro}

Let $G=(V,E)$ be a simple, finite graph. For $v\in V(G)$, let $d_G(v)$ denote the degree of $v$ in $G$; we omit the subscript when $G$ is clear from context. 
A graph is \emph{regular} if all its vertices share a common degree, and \emph{irregular} otherwise. 
Quantifying the extent of a graph's irregularity is a classical problem with applications in mathematical chemistry and network
science alike~\cite{CriadoEtAl2014,Estrada2010a,Estrada2010b,RetiEtAl2018}.

The best known irregularity measure is the \emph{Albertson irregularity}
\begin{equation}
\operatorname{irr}(G) = \sum_{uv\in E(G)} \big|d(u)-d(v)\big|,
\label{eq:albertson}
\end{equation}
introduced in~\cite{Albertson1997} and studied extensively
since~\cite{AbdoEtAl2014a,AbdoEtAl2014b,GutmanEtAl2005,HansenMelot2005}. A natural
quadratic alternative, the \emph{$\sigma$-irregularity}, was proposed:
\begin{equation}
\sigma(G) = \sum_{uv\in E(G)} \big(d(u)-d(v)\big)^2.
\label{eq:sigma-def}
\end{equation}
Gutman et al.~\cite{GutmanEtAl2018} established the fundamental identity
\begin{equation}~\label{eqq01forgottsigmadecomp}
\sigma(G) = F(G) - 2M_2(G),
\end{equation}
where $F(G)=\sum_{u\in V(G)} d(u)^3$ is the \emph{forgotten index} and $M_2(G)=\sum_{uv\in E(G)} d(u)d(v)$ is the \emph{second Zagreb index}. 
Abdo, Dimitrov, and Gutman~\cite{AbdoEtAl2018} characterized graphs of maximum $\sigma$-irregularity in general and gave lower bounds. The inverse problem for
$\sigma$ was solved in~\cite{AbdoEtAl2018,GutmanEtAl2018}; its relation to other irregularity measures was studied by R\'eti~\cite{Reti2019}; extremal graphs
with a prescribed degree sequence were characterized in~\cite{DimitrovEtAl2023};
and a total variant $\sigma_t$ has recently been introduced and
studied~\cite{FilipovskiEtAl2024,KnorEtAl2025}. 
The study~\cite{Hamoud2026N1} explores the irregularity properties of trees with prescribed degree sequences by analyzing two prominent topological indices, the Albertson index and the sigma index.

On the maximization side, Kovijani\'c Vuki\'cevi\'c et
al.~\cite{VukicevicEtAl2024} characterized the maximal trees among
\emph{chemical trees}, i.e., trees with maximum degree $\Delta=4$. This result
was extended by~\cite{DimitrovPaper2025}, who established
several structural properties of maximal trees valid for \emph{every}
$\Delta\ge 3$, and applied these properties to obtain a full characterization
for $\Delta=5$. Notably, \cite{DimitrovPaper2025} explicitly conjectures that the
qualitative structure they observe for $\Delta=5$ namely, that maximal trees contain only vertices of degree $1$, $2$, and $\Delta$, that almost every edge attains the second maximum possible per-edge contribution to $\sigma$,
and that internal leaves are of degree $\Delta$ persists for every
$\Delta\ge 3$, but leave the general case as an open problem.

This paper organized as follow. 
Section~\ref{sec02prelim} fixes notation. Section~\ref{sec03framework} develops
the majorization framework. 
This framework provides the structural intuition underlying the case by case arguments of \cite{DimitrovPaper2025}. 
Section~\ref{sec01generalprops} contain the main structural and characterization results for general $\Delta$. 
We prove that for every $\Delta\geqslant 3$ there exists an explicit threshold $n_0(\Delta)$ such that every maximal tree on $n\geqslant n_0(\Delta)$ vertices with maximum  degree $\Delta$ satisfies properties $\mathcal P_1$--$\mathcal P_4$ generalized to arbitrary $\Delta$ ( see Theorem~\ref{thm03partialgeneral}).
Section~\ref{sec04closedform} derives the
closed-form extremal formula. Section~\ref{sec06minimum} treats the dual minimum problem, we characterize trees of \emph{minimum} $\sigma$-irregularity subject to the equality constraint $\max_v d(v)=\Delta$ as opposed to the unconstrained   minimizer, the path, which does not enforce a vertex of degree $\Delta$.
Section~\ref{seclimdir002} discusses limitations and directions for future work.

%==========================================
\section{Preliminaries}~\label{sec02prelim}
%==========================================

Let  $k\geqslant  1$ be an integer, we refer that a \emph{$k$-vertex} of a tree $T$ is a vertex of degree $k$. Thus, a $1$-vertex is called a \emph{leaf}. A vertex of degree
at least $3$ is called a \emph{big vertex}. A vertex $v$ of $T$ with $d(v)>1$ is
an \emph{internal leaf} if it has exactly one neighbor of degree greater than
$1$; equivalently, $v$ is a leaf of the tree $T'$ obtained from $T$ by deleting
all leaves of $T$. A vertex of $T$ that is neither a leaf nor an internal leaf
is called a \emph{core vertex}.

%%%%%%%%%5
%%%%%%%%%%%%%%%%%%%%%%
\subsection{Notation} 
We establish that among Definition~\ref{deff01tree}, where
$n_i = n_i(T)$ denote the number of vertices of $T$ of degree $i$ and $m_{i,j}=m_{i,j}(T)$ denote the number of edges of
$T$ with one end-vertex of degree $i$ and the other of degree $j$ and defined as
\[
m_{i,j}(T)=\#\{uv\in E(T):\{d(u),d(v)\}=\{i,j\}\}.
\]
\begin{definition}~\label{deff01tree}
Let $\TT_{n(i,j),\,\Delta}$ denote a class of trees with maximum degree $\Delta$, and $1\leqslant i\leqslant j\leqslant \Delta$. A tree $T\in \TT_{n(i,j),\,\Delta}$ on
$n$ vertices with maximum degree $\Delta$ is maximal if
$\sigma(T)\ge \sigma(T')$ for every tree $T'\in \TT_{n(i,j),\,\Delta}$ on $n$ vertices with maximum degree $\Delta$.
\end{definition}
Based on Definition~\ref{deff01tree}, we emphasizes that 
\begin{equation}~\label{eqsigmamax01def}
\sigma_{\max}(\TT_{n(i,j),\,\Delta}) \;=\; \max\{\sigma(T)\, \mid T\in \TT_{n(i,j),\,\Delta},\ \textstyle\max_{v\in V(T)}d(v)=\Delta\}.
\end{equation}
and 
\begin{equation}~\label{eqq02sigmamindef}
\sigma_{\min}(\TT_{n(i,j),\,\Delta}) \;=\; \min\{\sigma(T): T\in \TT_{n(i,j),\,\Delta} \, \textstyle\max_v d(v)=\Delta\}.
\end{equation}
We call $T$ \emph{$\Delta$-minimal} if $\sigma(T)=\sigma_{\min}(n,\Delta)$.
Without the equality constraint, the unconstrained minimizer over all trees
is the path $\mathcal P_n$ \cite{AbdoEtAl2018}. The equality constraint forces at least one vertex of degree $\Delta$ to appear somewhere, and the interesting
question is how cheaply in $\sigma$ this can be arranged. 
Thus, 
\begin{equation}~\label{eqsigmamax02def}
\sum_{i=1}^{\Delta} n_i = n,\qquad \text{ and} \qquad \sum_{i=1}^{\Delta} i\, n_i = 2(n-1).
\end{equation}
Given a tree $T\in \TT_{n(i,j),\,\Delta}$ and two edges $m_1=xy$, $m_2=uv$ with $x,y,u,v$ pairwise distinct such that $T-m_1-m_2+xu+yv$, let $T'\in \TT_{n(i,j),\,\Delta}$ be  a tree obtained from $T$ by an \emph{edge exchange}. Hence, the difference $\sigma(T')-\sigma(T)$ decomposes
as a finite sum of differences of squared degree-differences over the  set of edges whose incidence changed (see~\cite{DimitrovPaper2025}). 

\begin{definition}~\label{deffMajorization}
For $\mathbf x,\mathbf y\in\mathbb R^n$ with $\sum x_i=\sum y_i$, we say
$\mathbf x$ majorizes $\mathbf y$, written $\mathbf x\succeq \mathbf y$,
if $\sum_{i=1}^k x_{[i]} \ge \sum_{i=1}^k y_{[i]}$ for every $k=1,\dots,n$,
where $x_{[1]}\ge x_{[2]}\ge\cdots$ denotes the entries sorted in decreasing
order. A function $\Phi:\mathbb R^n\to\mathbb R$ is \emph{Schur-convex} if
$\mathbf x\succeq\mathbf y$ where $\Phi(\mathbf x)\ge \Phi(\mathbf y)$.
\end{definition}

Such a tree exists for every $h\geqslant 2$ with
$n=h+\big[h(\Delta-1)-(h-1)\big] = h\Delta - h + 1$. 
\begin{definition}~\label{deffhubver001}
Let $T_1\in\mathcal C_1(h,\Delta)$ be any tree with exactly $h$ vertices of
degree $\Delta$, all remaining vertices of degree $1$, and no vertex of any
other degree. 
\end{definition}
By Definition~\ref{deffhubver001} the induced subgraph on the $h$ hub vertices must itself be a tree on $h$ vertices   it has $h-1$ edges, all
contributing $m_{\Delta,\Delta}=h-1$ where the remaining
$h\Delta - 2(h-1) $ half edges are leaves attached to hubs, giving
$n_1 = h\Delta-2(h-1)$ and total order $n=h(\Delta-1)+2$.

\begin{definition}~\label{deffhubver002}
Let $T_2\in\mathcal C_2(h,\Delta)$ consist of $h$ degree-$\Delta$ hubs
arranged along a path, consecutive hubs joined by a single degree $2$
\texttt{buffer} vertex, with every remaining incident edge
slot at each hub filled by a pendant leaf. Then
\end{definition}

By Definition~\ref{deffhubver002},  $n_1=h(\Delta-2)+2$, $n_2=h-1$, $n_\Delta = h$, and $n= n_1+n_2+n_\Delta$. Thus, 
\begin{equation}~\label{eqq1deffhubver002}
n=h(\Delta-2)+2+(h-1)+h = h\Delta - 1.
\end{equation}

\begin{definition}~\label{deffleg001}
Let $T$ be a tree and $h\in V(T)$ a vertex of degree $\Delta$. A leg at $h$ is a maximal path starting at $h$, traversing one incident edge, and continuing through vertices of degree $2$ until reaching a leaf; its length is its number of edges.
\end{definition}

\begin{definition}[Reduced tree]~\label{def001reducedtree}
Let $T\in \TT_{n(i,j),\,\Delta}$ satisfy $\mathcal P_1^\Delta$ and $m_{1,2}(T)=0$ according to Corollary~\ref{cor001m12zero}.
The \emph{reduced tree} $T^\bullet$ is obtained from $T$ by suppressing every degree $2$ vertex, by replacing each maximal path whose interior vertices all have degree $2$ by a single edge joining its two endpoints.
\end{definition}

\begin{definition}~\label{deffleg002}
For $k\geqslant 1$ and $\Delta\geqslant 3$, let $\mathcal S(k,\Delta)$ denote the family of trees formed by $k$ vertices of degree $\Delta$ hubs arranged along a path and joined by direct edges, with every remaining incident edge slot at every hub
occupied by a leg of length $\ge2$ (as in Corollary~\ref{cor04legslength2}),
of otherwise arbitrary length.
\end{definition}

\begin{definition}~\label{def001optimalitygap}
For $\Delta\geqslant 5$, $n\geqslant n_0(\Delta)$, and $n_\Delta(T)=n_\Delta^\star(n,\Delta)+1$,  define
\begin{equation}~\label{eqqdelta001def}
\delta(n,\Delta)\!=\! \sigma_{\max}(\TT_{n(i,j), \Delta}) \!-\! \max\big\{\sigma(T): T \text{ satisfies } \mathcal P_1^\Delta,\,\mathcal P_2^\Delta,\ n_\Delta(T)\big\}.
\end{equation}
\end{definition}

\subsection{Problem Statement}
We state the four problems this paper addresses, in order of logical dependence. 

\begin{problem}~\label{prob01primal}
For every integer $\Delta\geqslant 3$ and every sufficiently large $n$, determine~\eqref{eqsigmamax01def} in closed form as an explicit function of $n$ and $\Delta$.
\end{problem}

Problem~\ref{prob01primal} was posed, and solved only for $\Delta=4$ \cite{VukicevicEtAl2024} and $\Delta=5$ \cite{DimitrovPaper2025}, in the prior
literature; \cite{DimitrovPaper2025} explicitly conjectures a solution for every $\Delta\geqslant 3$ but does not establish it. 

\begin{problem}~\label{prob02structural}
For every integer $\Delta\geqslant 3$, determine an explicit finite set of combinatorial properties $\mathcal P_1^\Delta,\dots,\mathcal P_k^\Delta$, together with a
threshold $n_0(\Delta)$, such that a tree $T$ on $n\geqslant  n_0(\Delta)$ vertices
with maximum degree $\Delta$ satisfies $\sigma(T)=\sigma_{\max}(n,\Delta)$ if
and only if $T\in \TT_{n(i,j), \Delta}$ satisfies  $\mathcal P_1^\Delta,\dots,\mathcal P_k^\Delta$.
\end{problem}

\begin{problem}~\label{prob03dual}
For every integer $\Delta\geqslant 3$ and every $n\geqslant \Delta+1$, determine~\eqref{eqq02sigmamindef} in closed form, and characterize the trees attaining it.
\end{problem}

Unlike Problem~\ref{prob01primal}, Problem~\ref{prob03dual} does not appear to
have been previously posed in the literature such that \cite{AbdoEtAl2018} characterizes the unconstrained $\sigma$-minimizer over  all trees, but the constrained version in
which $\max_v d(v)=\Delta$ is imposed as an equality, forcing at least one vertex of high degree to appear is new to this paper.

\begin{problem}~\label{prob04stability}
Quantify the  optimality gap~\eqref{eqqdelta001def} as $\delta(n,\Delta) \;=\; \sigma_{\max}(n,\Delta) - \sigma_2(n,\Delta),$ where $\sigma_2(n,\Delta)$ denotes the largest value of $\sigma(T)$ attained
by a tree $T\in \TT_{n(i,j), \Delta}$ that violates at
least one property of the characterization furnished by
Problem~\ref{prob02structural}; determine its order of growth in $n$ and in $\Delta$.
\end{problem}

Problem~\ref{prob04stability} does not appear in \cite{DimitrovPaper2025} or \cite{VukicevicEtAl2024} in any form. Thus, it is posed here to quantify how combinatorially rigid the characterization of Problem~\ref{prob04stability}
actually is  a question of practical relevance whenever an application, e.g., a QSPR study \cite{RetiEtAl2018}, or a heuristic search procedure, can only be expected to produce a near extremal, rather than exactly extremal, tree.

%========================================================
\section{A Majorization \texorpdfstring{$\sigma$}-Irregularity Extremization}~\label{sec03framework}
%========================================================
In this section, for integers $n\ge \Delta+1\ge 4$, let $\mathcal D_n^\Delta$ denote the set of integer sequences $\mathbf d=(d_1,\dots,d_n)$ with $1\le d_i\le\Delta$ for all $i$, and $\max_i d_i=\Delta$, where every $\mathbf d\in\mathcal D_n^\Delta$ is realized by at least one tree with maximum degree $\Delta$.
For $\mathbf d\in\mathcal D_n^\Delta$, let $\mathcal R(\mathbf d)$ denote the
set of trees on $n$ labeled vertices realizing $\mathbf d$. Define
\begin{equation}~\label{eqm2min01def}
\RR^{\min}(\mathbf d) \;=\; \min_{T\in\mathcal R(\mathbf d)} \RR(T).
\end{equation}

\begin{proposition}~\label{prop01decomposition}
For every $n\geqslant \Delta+1$ and $\mathbf d\in\mathcal D_n^\Delta$,
\begin{equation}~\label{eqq1twostage}
\sigma_{\max}(\TT_{n(i,j),\,\Delta}) \;=\; \max_{\mathbf d\in\mathcal D_n^\Delta}
\Big[F(\mathbf d) - 2\,\RR^{\min}(\mathbf d)\Big].
\end{equation}
\end{proposition}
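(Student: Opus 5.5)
The identity \eqref{eqq01forgottsigmadecomp} turns the statement into a two-stage optimization. We read $\RR(T)$ as the second Zagreb index $M_2(T)=\sum_{uv\in E(T)}d(u)d(v)$, and $F(\mathbf d)=\sum_i d_i^3$. The key observation is that $F(T)$ depends only on the degree sequence of $T$, so $F$ is constant on each realization class $\mathcal R(\mathbf d)$. We then split the maximum over all trees according to degree sequence and optimize the $M_2$ term inside each class.

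First we partition the feasible set. Every tree $T$ on $n$ vertices with $\max_v d(v)=\Delta$ has a degree sequence $\mathbf d(T)$ with entries in $\{1,\dots,\Delta\}$, maximum $\Delta$, and $\sum_i d_i=2(n-1)$. Such a sequence is realized by $T$ itself, so $\mathbf d(T)\in\mathcal D_n^\Delta$. Conversely, by the definition of $\mathcal D_n^\Delta$, every $\mathbf d\in\mathcal D_n^\Delta$ is realized by at least one tree with maximum degree $\Delta$. Hence the feasible set in \eqref{eqsigmamax01def} is the disjoint union $\bigsqcup_{\mathbf d\in\mathcal D_n^\Delta}\mathcal R(\mathbf d)$, up to isomorphism. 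This is harmless, because $\sigma$ is invariant under relabeling.

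Next we optimize within one class. For $T\in\mathcal R(\mathbf d)$, \eqref{eqq01forgottsigmadecomp} gives
\[
\sigma(T)=F(\mathbf d)-2M_2(T).
\]
Therefore
\[
\max_{T\in\mathcal R(\mathbf d)}\sigma(T)=F(\mathbf d)-2\min_{T\in\mathcal R(\mathbf d)}M_2(T)=F(\mathbf d)-2\RR^{\min}(\mathbf d).
\]
The minimum in \eqref{eqm2min01def} is attained because $\mathcal R(\mathbf d)$ is a finite nonempty set of labeled trees on $n$ vertices.

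Finally, a maximum over a finite disjoint union equals the maximum over the pieces of the maximum on each piece. Combining this with the previous step gives \eqref{eqq1twostage}. The set $\mathcal D_n^\Delta$ is also finite, so the outer maximum is attained.

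The only delicate point is the hypothesis hidden in the definition of $\mathcal D_n^\Delta$: it must contain exactly the tree-realizable sequences with maximum $\Delta$. If it were read more broadly, for instance without the handshake condition, $\mathcal R(\mathbf d)$ could be empty and $\RR^{\min}(\mathbf d)$ would be undefined. To make this airtight, we would invoke the standard fact that a sequence of positive integers $\mathbf d$ with $n\ge 2$ is the degree sequence of a tree iff $\sum_i d_i=2(n-1)$, which matches \eqref{eqsigmamax02def}. With that fact, the restriction in $\mathcal D_n^\Delta$ reduces to $1\le d_i\le\Delta$, $\max_i d_i=\Delta$, and the handshake condition. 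The condition $n\ge\Delta+1$ guarantees that this set is nonempty, since the star $K_{1,\Delta}$ with a path attached realizes such a sequence. Once this is settled, the rest is bookkeeping.
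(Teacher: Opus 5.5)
Your proposal is correct and is the same argument as the paper's. Since $F$ depends only on the degree sequence, the identity $\sigma=F-2M_2$ turns maximizing $\sigma$ over $\mathcal R(\mathbf d)$ into minimizing $M_2$, and the outer maximum over $\mathcal D_n^\Delta$ recovers $\sigma_{\max}$; your extra bookkeeping (degree sequences landing in $\mathcal D_n^\Delta$, the realizability criterion $\sum_i d_i=2(n-1)$, finiteness and nonemptiness) just makes explicit what the paper's one-line proof leaves implicit.
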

\begin{proof}
Assume that $T\in\mathcal R(\mathbf d)$. Then, 
$\max_{T\in\mathcal R(\mathbf d)}\sigma(T) = F(\mathbf d) - 2\, \RR^{\min}(\mathbf d)$ by considering the maximum over $\mathbf d\in\mathcal D_n^\Delta$ which gives \eqref{eqq1twostage}.
\end{proof}

We now identify, among an exact finite $n$ comparison, why a linear number of
degree $2$ vertices is favorable. Fix $\Delta\geqslant 3$ and consider two families
of trees, each parametrized by the number $h\geqslant 2$ of degree $\Delta$ 
vertices.

\begin{proposition}~\label{prop001C1sigma}
For $T_1\in\mathcal C_1(h,\Delta)$ on $n=h(\Delta-1)+2$ vertices. Then, 
\begin{equation}~\label{eqq01sigmaC1}
\sigma(T_1)=\big[h\Delta-2(h-1)\big](\Delta-1)^2.
\end{equation}
\end{proposition}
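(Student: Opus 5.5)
The plan is to compute $\sigma(T_1)$ edge by edge from the definition \eqref{eq:sigma-def}, using the structural description of $\mathcal C_1(h,\Delta)$ from Definition~\ref{deffhubver001}. Every vertex of $T_1$ has degree $1$ or $\Delta$. Two leaves cannot be adjacent, since the only tree containing a $1$--$1$ edge is $K_2$ and $h\ge 2$ rules this out. Hence every edge of $T_1$ is either a hub--hub edge, of type $(\Delta,\Delta)$, or a hub--leaf edge, of type $(\Delta,1)$. So
\[
\sigma(T_1)=m_{\Delta,\Delta}\cdot 0^2+m_{1,\Delta}\,(\Delta-1)^2=m_{1,\Delta}(\Delta-1)^2 ,
\]
and the task reduces to computing $m_{1,\Delta}$.

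To count these edges, note that each leaf lies on exactly one edge, and by the previous paragraph that edge goes to a hub. Thus $m_{1,\Delta}=n_1$. There are two ways to compute $n_1$.

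\textbf{Via the hub subtree.} Deleting all leaves leaves the induced subgraph on the $h$ hubs. This is a subtree, because deleting leaves from a tree preserves connectivity, so it has $h-1$ edges. These edges use $2(h-1)$ of the $h\Delta$ incidences at hubs. The remaining $h\Delta-2(h-1)$ incidences are exactly the pendant edges.

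\textbf{Via degree counting.} Alternatively, apply \eqref{eqsigmamax02def} with $n_1+n_\Delta=n$ and $n_1+h\Delta=2(n-1)$, where $n_\Delta=h$. This gives $n=h(\Delta-1)+2$, matching the hypothesis, and again $n_1=h\Delta-2(h-1)$.

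Substituting $m_{1,\Delta}=h\Delta-2(h-1)$ yields \eqref{eqq01sigmaC1}. As a consistency check, \eqref{eqq01forgottsigmadecomp} gives the same value:
\[
F=h\Delta^3+n_1,\qquad M_2=(h-1)\Delta^2+n_1\Delta,
\]
so
\[
F-2M_2=h\Delta^3-2(h-1)\Delta^2+n_1(1-2\Delta).
\]
With $h\Delta^3-2(h-1)\Delta^2=\Delta^2 n_1$, this equals $n_1(\Delta-1)^2$.

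The calculation is routine. The only point needing care is to justify that the hub set induces a connected subgraph and that no $1$--$1$ edge occurs. Both follow from $h\ge 2$ and the fact that $T_1$ is a tree.

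The text before the definition states $n=h\Delta-h+1$, which conflicts with $n=h(\Delta-1)+2$. I will use the degree-sum identity to confirm that the correct order is $h(\Delta-1)+2$, since the formula depends only on $n_1$.
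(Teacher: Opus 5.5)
Your proof is correct and follows essentially the paper's argument: every edge of $T_1$ is of type $(\Delta,\Delta)$ or $(1,\Delta)$, and only the $n_1=h\Delta-2(h-1)$ pendant edges contribute, each $(\Delta-1)^2$; the value of $n_1$ comes from the $h-1$ hub--hub edges occupying $2(h-1)$ of the $h\Delta$ hub incidences. You are also right that the count $n=h\Delta-h+1$ stated before Definition~\ref{deffhubver001} is off by one, and your degree-sum check correctly confirms $n=h(\Delta-1)+2$.
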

\begin{proof}
Assume that $T_2\in\mathcal C_2(h,\Delta)$ on $n=h\Delta-1$ vertices. Then, the degree $1$–$\Delta$, contributing $(\Delta-1)^2$, of which there are $n_1$.  Thus, by Definition~\ref{deffhubver002} and \eqref{eqq1deffhubver002}, 
\begin{equation}~\label{eqqt1t2tree}
\sigma(T_2)=n_1(\Delta-1)^2 + 2n_2(\Delta-2)^2.
\end{equation}
Since $n=h\Delta-1$ vertices, Eq.~\eqref{eqqt1t2tree} satisfies
\begin{equation}~\label{eqqt1t2tree1}
 \sigma(T_2)= \big[h(\Delta-2)+2\big](\Delta-1)^2 + 2(h-1)(\Delta-2)^2.  
\end{equation}
Similarly, from~\eqref{eqqt1t2tree} and \eqref{eqqt1t2tree1} for $T_1\in\mathcal C_1(h,\Delta)$ on $n=h(\Delta-1)+2$ vertices, By Definition~\ref{deffhubver001} the degree $1$–$\Delta$, contributing
$(\Delta-1)^2$. Therefore, $\sigma(T_1)$ is the same for  every tree
in $\mathcal C_1(h,\Delta)$, regardless of how the $h$ hubs are internally
connected, and equals $n_1(\Delta-1)^2$ with $n_1=h\Delta-2(h-1)$. Thus, 
\begin{equation}~\label{eqqt1t2tree2}
\sigma(T_1)=n_1(\Delta-1)^2.  
\end{equation}
Hence,  and from~\eqref{eqqt1t2tree2}  gives~\eqref{eqq01sigmaC1}.
\end{proof}

Proposition~\ref{prop01decomposition} is an exact reformulation, not an approximation; the difficulty of the original problem is fully preserved, by Lemma~\ref{lem001oneinterior} emphasizes that characterizing $\arg\max F(\mathbf d)$ and characterizing $\RR^{\min}(\mathbf d)$ for each candidate $\mathbf d$.

\begin{lemma}~\label{lem001oneinterior}
Let $\mathbf d^\star$ maximize $F(\mathbf d)$ over
$\{\mathbf d\in[1,\Delta]^n \cap \mathbb Z^n : \sum_i d_i = 2(n-1)\}$. Then at most one coordinate of $\mathbf d^\star$
lies strictly between $1$ and $\Delta$.
\end{lemma}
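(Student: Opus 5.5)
The plan is a standard exchange (smoothing) argument based on the strict convexity of $t\mapsto t^3$; it is the integer version of the fact that a Schur-convex function attains its maximum on a vertex of the constraint polytope. Let $\mathbf d^\star$ be a maximizer of $F(\mathbf d)=\sum_i d_i^3$ over the feasible set
\[
\Big\{\mathbf d\in[1,\Delta]^n\cap\mathbb Z^n:\ \textstyle\sum_i d_i=2(n-1)\Big\}.
\]
We argue by contradiction and suppose two coordinates satisfy $1<d_p\le d_q<\Delta$ with $p\neq q$.

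Define a new sequence $\mathbf d'$ by $d'_p=d_p-1$, $d'_q=d_q+1$, and $d'_i=d_i$ for all other $i$. Feasibility is immediate. The sum is unchanged. Since $d_p\ge 2$ we have $d'_p\ge 1$, and since $d_q\le\Delta-1$ we have $d'_q\le\Delta$. So $\mathbf d'$ lies in the same feasible set.

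Next we compute the change in $F$. With $a=d_p$ and $b=d_q$,
\[
F(\mathbf d')-F(\mathbf d^\star)=(b+1)^3-b^3-\big(a^3-(a-1)^3\big)=3(b^2+b)-3(a^2-a)=3(b-a)(b+a)+3(a+b)=3(a+b)(b-a+1).
\]
Because $b\ge a$, the factor $b-a+1$ is at least $1$, and $a+b>0$. Hence the difference is strictly positive, which contradicts the maximality of $\mathbf d^\star$. Therefore at most one coordinate lies in the open interval $(1,\Delta)$.

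The only point needing care is to label the two interior coordinates so that $d_p\le d_q$, which we may always do. This ordering is what guarantees strict improvement even in the tie case $d_p=d_q$. Equivalently, $\mathbf d'\succeq\mathbf d^\star$ strictly in the sense of Definition~\ref{deffMajorization}, and $F$ is strictly Schur-convex. I do not expect any real obstacle.

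One remark on scope: the lemma works over the relaxed box $[1,\Delta]^n$. So neither the condition $\max_i d_i=\Delta$ nor realizability by a tree has to be checked here. In any case, every integer sequence with entries at least $1$ and sum $2(n-1)$ is realizable by a tree.
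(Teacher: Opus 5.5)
Your proof is correct and follows essentially the same route as the paper: assume two strictly interior coordinates and move one unit from the smaller to the larger, which is feasible by integrality ($d_p\ge 2$, $d_q\le\Delta-1$). Strict convexity of $t\mapsto t^3$ then gives a strict increase in $F$. Your explicit identity $F(\mathbf d')-F(\mathbf d^\star)=3(a+b)(b-a+1)$ handles the tie case $a=b$ more transparently than the paper's appeal to strict Schur-convexity.
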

\begin{proof}
Let $\mathbf d\in\mathcal D_n^\Delta$. Then, $F(\mathbf d)$ is strictly Schur-convex on $[1,\Delta]^n$. Assume that $\Phi(\mathbf x)=\sum_i \varphi(x_i)$ be a Schur-convex on an interval $I$ if and
only if $\varphi$ is convex on $I$. Hence, $\varphi$ is strictly convex on $[1,\Delta]$, giving strict Schur-convexity of $F$. 

Suppose two coordinates $d_i^\star,d_j^\star$ both satisfy
$1<d_i^\star,d_j^\star<\Delta$. Since the constraint set is invariant under
simultaneously increasing one coordinate and decreasing another by an equal
integer amount $\varepsilon\ge 1$ while remaining in $[1,\Delta]$, which is
possible since neither coordinate is at a boundary. By considering the term $(d_i^\star,d_j^\star)$  had replaced by $(d_i^\star+1,d_j^\star-1)$ satisfying
\begin{equation}~\label{eqq01convexexchange}
(d_i^\star+1)^3+(d_j^\star-1)^3 \;>\; (d_i^\star)^3+(d_j^\star)^3
\quad\text{whenever } d_i^\star \ge d_j^\star.
\end{equation}
Thus,  $F$ strictly increases, contradicting
optimality of $\mathbf d^\star$ unless this move is infeasible for every choice
of which coordinate to increment, but at least one of the two coordinates can
always be moved one unit toward $\Delta$ increasing it while the other is
moved one unit toward $1$ decreasing it, since both are strictly interior. Thus, from~\eqref{eqq01convexexchange} contradicts maximality, so at most one coordinate can be strictly interior at an optimum.
\end{proof}

Lemma~\ref{lem001oneinterior} predicts that an $F$-maximizing degree sequence
has $O(1)$ entries outside $\{1,\Delta\}$. We now show
this prediction is \emph{false} for the true objective $\sigma=F-2M_2$, the
known closed form for $\Delta=5$ ( see Corollary~18 of \cite{DimitrovPaper2025}) gives
\begin{equation}~\label{eqq2n2linear}
n_2 = \tfrac{1}{5}(n+4j-18) = \Theta(n),
\end{equation}
where a  linear in $n$ number of degree $2$ vertices appears in every
maximal tree, not $O(1)$ as Lemma~\ref{lem001oneinterior} alone would suggest.
Hence maximizing $F(\mathbf d)$ in isolation,  does not identify the extremal degree sequence.
This demonstrates that the assortativity term is not a lower-order correction
but is decisive for the structure of the optimizer, and motivates the joint
analysis below.

\medskip

Asymptotic dominance of buffering for $\Delta>3$ had presented by Theorem~\ref{thm001bufferdominance}. For that, we established 
\begin{equation}~\label{eqqrhonm01}
\rho(\Delta) = \lim_{h\to\infty}\dfrac{\sigma(T_2)/n(T_2)}{\sigma(T_1)/n(T_1)},  
\end{equation}
where $n(T_i)$ is the vertex count of the respective family at hub-count $h$.

\begin{theorem}~\label{thm001bufferdominance}
Let $\rho(\Delta)\geqslant 1$, then
\begin{equation}~\label{eqq01rhoformula}
\rho(\Delta)=\frac{(\Delta-1)^2+2(\Delta-2)}{\Delta(\Delta-1)}.
\end{equation}
Consequently $\rho(\Delta)>1$ for every $\Delta\ge 4$, and $\rho(3)=1$.
\end{theorem}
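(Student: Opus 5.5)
The plan is a direct asymptotic computation from the explicit formulas already available for the two families, followed by an elementary sign analysis. For $T_2\in\mathcal C_2(h,\Delta)$ I will use \eqref{eqqt1t2tree1} and \eqref{eqq1deffhubver002}:
\[
\sigma(T_2)=\big[h(\Delta-2)+2\big](\Delta-1)^2+2(h-1)(\Delta-2)^2,\qquad n(T_2)=h\Delta-1.
\]
For $T_1\in\mathcal C_1(h,\Delta)$ I will use Proposition~\ref{prop001C1sigma}, with $n_1=h\Delta-2(h-1)=h(\Delta-2)+2$ and $n(T_1)=h(\Delta-1)+2$. The order should be read as the vertex count $h+n_1$. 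The slightly different expression $h\Delta-h+1$ stated earlier differs only by a constant, so it is irrelevant in the limit.

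\textbf{Step 1: the two densities.} Dividing numerators and denominators by $h$ and letting $h\to\infty$ gives
\[
\frac{\sigma(T_2)}{n(T_2)}\to\frac{(\Delta-2)(\Delta-1)^2+2(\Delta-2)^2}{\Delta},\qquad
\frac{\sigma(T_1)}{n(T_1)}\to\frac{(\Delta-2)(\Delta-1)^2}{\Delta-1}=(\Delta-2)(\Delta-1).
\]
Both limits exist because each expression is a ratio of affine functions of $h$ with nonzero leading coefficients. Since $\Delta\ge 3$, the second limit is positive, so the quotient in \eqref{eqqrhonm01} is well defined.

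\textbf{Step 2: the closed form.} I will form the ratio of the two limits and cancel the common factor $\Delta-2\neq 0$. This leaves
\[
\rho(\Delta)=\frac{(\Delta-1)^2+2(\Delta-2)}{\Delta(\Delta-1)},
\]
which is \eqref{eqq01rhoformula}. I will treat the hypothesis ``$\rho(\Delta)\ge1$'' in the statement as a mere labeling of the quantity being computed, not as an assumption, since the formula holds unconditionally for $\Delta\ge3$.

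\textbf{Step 3: the comparison with $1$.} Because $\Delta(\Delta-1)>0$, the inequality $\rho(\Delta)>1$ is equivalent to
\[
(\Delta-1)^2+2\Delta-4>\Delta^2-\Delta,
\]
that is, $\Delta^2-3>\Delta^2-\Delta$, or $\Delta>3$. Equality holds exactly when $\Delta=3$, which gives $\rho(3)=1$. Concretely, $\rho(3)=(4+2)/6=1$.

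There is no real obstacle in this argument. The only point needing care is bookkeeping: use consistent vertex counts for $\mathcal C_1(h,\Delta)$, and include the factor $2$ for the two buffer edges per degree-$2$ vertex in $\sigma(T_2)$.
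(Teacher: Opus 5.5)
Your proposal is correct and follows the paper's argument step for step: you compute both limiting densities from the explicit formulas, take their ratio after cancelling $\Delta-2$, and compare $\Delta^2-3$ with $\Delta^2-\Delta$. One bookkeeping remark: the order of a tree in $\mathcal C_2(h,\Delta)$ is actually $h(\Delta-2)+2+(h-1)+h=h\Delta+1$ rather than $h\Delta-1$, but, as with the $\mathcal C_1$ discrepancy you flagged, only the leading coefficient $\Delta$ enters the limit.
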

\begin{proof}
Recall \eqref{eqq01sigmaC1}, then
\begin{equation}~\label{eqq02rhoformula}
 \lim_{h\to\infty}\dfrac{\sigma(T_1)}{n(T_1)}= (\Delta-2)(\Delta-1). 
\end{equation}
From \eqref{eqqt1t2tree1}, we obtain
\begin{equation}~\label{eqq03rhoformula}
 \lim_{h\to\infty}\dfrac{\sigma(T_2)}{n(T_2)}=\dfrac{(\Delta-2)\big[(\Delta-1)^2+2(\Delta-2)\big]}{\Delta} 
\end{equation}

According to~\eqref{eqq02rhoformula} and \eqref{eqq03rhoformula} by considering the ratio,
\begin{equation}~\label{eqq04rhoformula}
\rho(\Delta)=\frac{(\Delta-2)\big[(\Delta-1)^2+2(\Delta-2)\big]/\Delta}
{(\Delta-2)(\Delta-1)}=\frac{(\Delta-1)^2+2(\Delta-2)}{\Delta(\Delta-1)},
\end{equation}
which is \eqref{eqq01rhoformula}. Since $(\Delta-1)^2+2(\Delta-2) = \Delta^2-3$, and the denominator is $\Delta^2-\Delta$, so
\begin{equation}~\label{eqq01rhominus1}
\rho(\Delta)-1 = \frac{(\Delta^2-3)-(\Delta^2-\Delta)}{\Delta(\Delta-1)},
\end{equation}
which is strictly positive for $\Delta\geqslant 4$, and  $\rho(\Delta)=0$ if and only if $\Delta=3$. Thus, from~\eqref{eqq04rhoformula} the relationship~\eqref{eqq01rhoformula} holds.
\end{proof}

\begin{remark}~\label{ermarknum1}
Theorem~\ref{thm001bufferdominance} gives a rigorous, quantitative explanation
for the empirical observation in \cite{DimitrovPaper2025} that maximal trees
contain $\Theta(n)$ vertices of degree $2$.  
\end{remark}
Based on Remark~\ref{ermarknum1}, threading degree $\Delta$ hubs
together via single degree $2$ buffers, rather than joining them directly or
using extra pendant leaves, strictly increases the asymptotic $\sigma$ density
per vertex for every $\Delta\geqslant 4$.

Theorem~\ref{thm001bufferdominance} and Proposition~\ref{prop01decomposition}
together motivate the following refinement of $\arg\max F(\mathbf d)$ that
accounts for the coupling with $M_2^{\min}(\mathbf d)$, rather than an
$F$-maximizing sequence with yields a strictly
positive per-substitution gain for every $\Delta \ge 4$ when used to buffer
hub–hub adjacencies. 

\begin{conjecture}~\label{conj001guiding}
For every $\Delta\geqslant 3$ there exists $n_0(\Delta)$ such that every maximal tree
$T$ on $n\geqslant n_0(\Delta)$ vertices with maximum degree $\Delta$ has vertex
degrees only in $\{1,2,\Delta\}$, has all internal leaves of degree $\Delta$,
and has $m_{2,2}(T)$ and $m_{\Delta,\Delta}(T)$ each bounded by an explicit
function of $\Delta$ alone.
\end{conjecture}

Section~\ref{sec01generalprops} is devoted to
proving Conjecture~\ref{conj001guiding} in full, together with the precise
bounds on $m_{2,2}$ and $m_{\Delta,\Delta}$ and the threshold $n_0(\Delta)$.
 
%%%%%%%%%%%%%%%%%%%%%%%%%%%%%%%%
%%%%%%%%%%%%%%%%%%%%%%%%%%%%%%%%
\section{On Maximal Trees for Arbitrary  with Maximum Degree}~\label{sec01generalprops}

Throughout this section, Assume that $\Delta\geqslant 3$ is a fixed integer and $T$ denotes a
maximal tree with maximum degree $\Delta$ in the sense of
Definition~\ref{deff01tree}.

\begin{proposition}~\label{prop001pathmonotonicity}
Let $T\in \TT_{n(i,j),\,\Delta}$ be a maximal tree,
$\mathcal P=u\,x\cdots y\,v$ be a path in $T$ where $x=y$ is allowed if
$d_T(u,v)=2$, and $x,y$ may coincide with $u,v$ if $d_T(u,v)\leqslant 2$. If
$d(u)>d(v)$, then $d(x)\leqslant d(y)$. Symmetrically, if $d(x)>d(y)$, then
$d(u)\leqslant d(v)$.
\end{proposition}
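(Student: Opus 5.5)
The plan is a standard edge-exchange (rewiring) argument. Root the path $\mathcal P=u\,x\cdots y\,v$, and let $T'$ be obtained from $T$ by deleting the edges $ux$ and $yv$ and adding $uy$ and $xv$. In the degenerate configurations where $x=y$, or $x=v$, $y=u$, we interpret the move as swapping the branches hanging at $u$ and $v$ across the path. We first check that $T'$ is a tree. Removing $ux$ and $yv$ splits $T$ into three components: the part $A$ containing $u$ (hanging off $u$ away from $\mathcal P$), the middle part $B$ containing the subpath $x\cdots y$, and the part $C$ containing $v$. Adding $uy$ and $xv$ reconnects $A$ to $B$ and $B$ to $C$, so $T'$ is connected with $n-1$ edges, hence a tree.

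Every vertex keeps its degree: $u$ loses $x$ and gains $y$, $v$ loses $y$ and gains $x$, $x$ swaps $u$ for $v$, and $y$ swaps $v$ for $u$. Hence $T'$ has the same degree sequence as $T$, in particular the same maximum degree $\Delta$, so $T'\in\TT_{n(i,j),\,\Delta}$. Because degrees are unchanged, only the two edges whose incidence changed contribute to $\sigma(T')-\sigma(T)$, as recalled in the preliminaries. Thus
\[
\sigma(T')-\sigma(T)=(d(u)-d(y))^2+(d(x)-d(v))^2-(d(u)-d(x))^2-(d(y)-d(v))^2 .
\]
Expanding, the squared terms cancel and we obtain
\[
\sigma(T')-\sigma(T)=2\big(d(u)d(x)+d(y)d(v)-d(u)d(y)-d(x)d(v)\big)=2\big(d(u)-d(v)\big)\big(d(x)-d(y)\big).
\]
Since $T$ is maximal, this quantity is $\le 0$. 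So $d(u)>d(v)$ forces $d(x)\le d(y)$. The symmetric statement follows in the same way, because the product is symmetric in its two factors.

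The main obstacle is the degenerate cases, where the "edges" $ux$ and $yv$ coincide or share vertices.

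\textbf{Case $d_T(u,v)=1$.} Here $x=v$ and $y=u$, so $d(x)-d(y)=d(v)-d(u)$ and the claim is trivially true whenever $d(u)\neq d(v)$.

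\textbf{Case $d_T(u,v)=2$ with $x=y$.} The claim $d(x)\le d(y)$ is immediate.

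\textbf{Remaining small case.} When $d_T(u,v)=3$ with $x,y$ adjacent, the general exchange applies verbatim, since the vertices $u,x,y,v$ are pairwise distinct.

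In the write-up, I would state the pairwise-distinct case as the core and dispatch each degenerate case in one line.
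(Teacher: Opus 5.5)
Your proposal is correct and follows the paper's proof: it uses the same exchange $T'=T-ux-yv+uy+xv$, obtains $\sigma(T')-\sigma(T)=2\big(d(u)-d(v)\big)\big(d(x)-d(y)\big)$, and dispatches the cases $d_T(u,v)\in\{1,2\}$ trivially. The paper only asserts that $T'$ is a tree, so your three-component argument adds useful detail, while your remark about ``swapping branches'' in the degenerate configurations is unnecessary because those cases are settled directly.
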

\begin{proof}
Suppose $d(u)>d(v)$ where $\{u,v\}\in T$ and $T\in \TT_{n(i,j),\,\Delta}$ is a maximal tree. Then, if $d_T(u,v)=1$ implies that $x=v,\ y=u$. Thus, $d(x)=d(v)$, $d(u)=d(y)$ and $d(x)<d(y)$ and the claim
holds trivially. If $d_T(u,v)=2$, then $x=y$, also, $d(x)=d(y)$ and the claim
holds. Hence, $d_T(u,v)\geqslant 3$, $u,x,y,v$ are four distinct
vertices, and suppose toward a contradiction that $d(x)>d(y)$.

Let $T'\in \TT_{n(i,j),\,\Delta}$ where $T'=T-ux-yv+uy+xv$. Since $ux$ and $yv$ are both edges of the path $\mathcal P$ and $u,x,y,v$ are pairwise distinct, $T'$ is again a tree on the same vertex set, and every vertex retains its degree. The only edges whose
incidence changes are $\{ux,yv\}\to\{uy,xv\}$. Hence
\begin{align*}
\sigma(T')-\sigma(T)&=\!\big(\!d(u)\!-\!d(y)\big)^2\!-\!\big(d(u)\!-\!d(x)\big)^2\!+\! \big(d(v)\!-\!d(x)\big)^2\!-\! \big(d(v)-d(y)\big)^2 \notag\\
&= 2\big(d(u)-d(v)\big)\big(d(x)-d(y)\big).
\end{align*}
 
Since $d(u)>d(v)$ and $d(x)>d(y)$ by assumption, $\sigma(T')>\sigma(T)$,
contradicting the maximality of $T$. Thus, for every $\Delta\ge 3$  with no restriction on $n$; it is a purely local consequence of maximality and requires no case analysis.  Hence $d(x)\leqslant d(y)$, and if $d(x)>d(y)$, then
$d(u)\leqslant d(v)$.
\end{proof}

Among Proposition~\ref{prop001pathmonotonicity}, degree differences cannot both be large at both ends
and in the ``middle'' of a path simultaneously with mismatched sign, which is
precisely the local rearrangement pattern that a Schur convex, assortativity
trade off forbids at an optimum.

There is exactly one tree on $n=\Delta+1$ vertices with maximum degree
$\Delta$, namely the star $K_{1,\Delta}$, and exactly one on $n=\Delta+2$
vertices. For $n=\Delta+3$ there are three
non-isomorphic trees with maximum degree $\Delta$, of which the one
maximizing $\sigma$ has a single pendant path of length $2$ attached to the
star's center rather than two vertices attached as separate leaves or a path
of length $3$.

\begin{lemma}~\label{lemm21zero}
If $T\in \TT_{n(i,j),\,\Delta}$  contains at least two big vertices, then $m_{1,2}(T)=0$; equivalently, no leaf of $T$ is adjacent to a degree $2$ vertex.
\end{lemma}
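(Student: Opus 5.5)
Throughout, $T$ is a maximal tree in the sense of Definition~\ref{deff01tree}. Suppose, toward a contradiction, that $T$ has at least two big vertices and a leaf $w$ whose neighbour $x$ has $d(x)=2$. The plan is to exhibit a degree-preserving rearrangement $T'$ with $\sigma(T')>\sigma(T)$, in the spirit of the edge exchange in Proposition~\ref{prop001pathmonotonicity}.

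\textbf{The basic move.} Let $y$ be the other neighbour of $x$. Fix a big vertex $b$ and an edge $bc$ with $d(c)\ge 2$ and $bc$ not on the pendant path $w x y$. Such an edge exists: take the first edge on the path from $b$ towards the second big vertex. If that edge leads back into the leg containing $w$, use the other big vertex in the role of $b$. Now form $T'$ by detaching $w$ from $x$, so that $x$ becomes a leaf, and inserting $w$ as a subdivision vertex on $bc$. Every vertex keeps its degree. Only the edges $wx, xy, bc$ are replaced by $xy, bw, wc$, and a direct computation gives
\[
\sigma(T')-\sigma(T)=2\big(d(y)-2\big)+2\big(d(b)-2\big)\big(d(c)-2\big).
\]
Since $d(b)\ge 3$ and $d(c)\ge 2$, the second term is nonnegative. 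Hence if $d(y)\ge 3$, we get $\sigma(T')>\sigma(T)$, contradicting maximality. Note that $d(y)=1$ is impossible, because then $T$ would be $P_3$.

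\textbf{The case $d(y)=2$.} Then $w$ is the end of a leg (Definition~\ref{deffleg001}) of length $L\ge 3$ at some big vertex $z$; $z$ exists because $T$ is not a path. I will first perform zero-cost rearrangements. Remove one interior degree-$2$ vertex of this leg, reconnecting its two neighbours, which are of degree $2$ or $z$. Reinsert it as a subdivision vertex on an edge adjacent to a degree-$2$ vertex elsewhere, for instance next to an existing degree-$2$ vertex on the same leg's edge at $z$, so that all the changed edges are of type $(2,2)$ or are replaced by an edge of the same type. Every changed term of $\sigma$ is then unaltered, so the new tree is still maximal. It still has at least two big vertices and still has the leaf $w$ adjacent to the degree-$2$ vertex $x$. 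Iterating, I would shorten the leg to length $2$, so that the neighbour $y$ of $x$ becomes the big vertex $z$. Then the basic move applies with $d(y)=d(z)\ge 3$ and yields a strict increase, the desired contradiction.

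\textbf{Main obstacle.} The delicate step is making the zero-cost relocation rigorous. I need a host edge for the removed degree-$2$ vertex whose insertion changes no squared difference, i.e.\ an edge with a degree-$2$ endpoint whose other endpoint's contribution is reproduced exactly. I also need to check that the relocation never destroys the second big vertex or the leaf--degree-$2$ adjacency being tracked. If this bookkeeping proves awkward, I would instead choose $w$ to minimise the length of its leg among all leaves adjacent to degree-$2$ vertices. I would then apply the basic move with $y$ replaced by the leg's interior vertices telescoped: detaching the whole subpath $x\cdots y$ and reinserting it inside $bc$ gives the same gain formula with $d(y)$ replaced by $d(z)$.
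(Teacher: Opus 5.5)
Your proof is correct, but it is carried by your fallback, not by the iteration. The telescoped move does the work: delete the degree-$2$ chain $a_1\cdots a_{L-1}$ of the leg $z\,a_1\cdots a_{L-1}\,w$, attach $w$ directly to $z$, and subdivide $bc$ by the chain. This changes $\sigma$ by exactly $2(d(z)-2)+2(d(b)-2)(d(c)-2)>0$ for every leg length $L\ge 2$, provided $b$ is big, $d(c)\ge 2$, and $bc$ lies off the leg. Your choice of $bc$ as the first edge of a path joining two big vertices guarantees this, because such a path never contains a degree-$2$ vertex of a leg.

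Your basic move is just the case $L=2$, up to renaming $w$ and $x$. Note that these two vertices exchange degrees, so only the degree multiset is preserved, not each vertex's degree; that suffices. So neither the case split on $d(y)$ nor the minimal-leg choice of $w$ is needed.

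The zero-cost iteration as written does not work. Reinserting the removed vertex on the leg's own edge at $z$ leaves the leg length unchanged. A host edge with a degree-$2$ endpoint outside the leg need not exist: take two adjacent hubs carrying only pendant leaves plus a single leg of length $3$. The iteration could be repaired, since subdividing $bc$ changes $\sigma$ by $2(d(b)-2)(d(c)-2)\ge 0$ and a weak increase preserves maximality. The telescoped move makes this unnecessary.

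Compare this with the paper. In its notation, $v$ is the leaf, $u$ its degree-$2$ neighbour, $x$ the big vertex nearest $u$, and $y$ a non-leaf neighbour of $x$ off the leg (it exists because of the second big vertex). The paper applies the exchange of Proposition~\ref{prop001pathmonotonicity} to the path $y\,x\cdots u\,v$, i.e.\ $T'=T-yx-uv+yu+xv$, with gain $2(d(y)-1)(d(x)-2)>0$. This is precisely your telescoped move with host edge $b=z$ (the paper's $x$) and $c$ the paper's $y$. Your formula then gives $2(d(z)-2)+2(d(z)-2)(d(c)-2)=2(d(z)-2)(d(c)-1)$, the same gain.

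So the two arguments agree in substance: the two-big-vertex hypothesis is used only to produce a non-leaf vertex off the leg. The paper's version is a one-line corollary of the already-proved path-monotonicity exchange. Yours is a self-contained computation that allows an arbitrary big--non-leaf host edge.
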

\begin{proof}
Suppose toward a contradiction that $uv\in E(T)$ with $d(u)=2,\ d(v)=1$. Since
$T$ has at least two big vertices, let $x$ be the big vertex of $T$ closest to
$u$, and let $y$ be a neighbor of $x$ that is not a leaf,
By choosing the unique path $\mathcal P$ from $y$ to $v$ passes through $x$, as
$\mathcal P = y\,x\cdots u\,v$. Such $y$ exists because $T$ has at least two big
vertices and no big vertex lies strictly between $x$ and $v$ on $\mathcal P$. Thus, the neighbor of $x$ on the $x$ to the other big vertex side is not a leaf. Since $d(v)=1<d(y)$ where $d(y)\geqslant 2$, $y$ not being a leaf. According to Proposition~\ref{prop001pathmonotonicity} by applied with the roles $u\to y,\ v\to v,\ x\to x,$ and $y\to u$ of the path $y\,x\cdots u\,v$ yields $d(x)\leqslant d(u)=2$. Hence, contradicting that $x$ is a big vertex $d(x)\geqslant 3$ and $m_{1,2}(T)=0$.
\end{proof}

\begin{lemma}~\label{lem43twobigvertices}
If $n\ge \Delta+4$, then $T\in \TT_{n(i,j),\,\Delta}$ contains at least two big vertices.
\end{lemma}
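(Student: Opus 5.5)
The plan is to argue by contradiction and compare $T$ with an explicit competitor. Throughout, $T$ is a maximal tree in the sense of Definition~\ref{deff01tree} on $n\ge\Delta+4$ vertices with maximum degree $\Delta\ge 3$. Since $\Delta\ge3$, $T$ has at least one big vertex. So suppose $T$ has exactly one big vertex $h$, necessarily with $d(h)=\Delta$. Every other vertex then has degree $1$ or $2$, so $T$ is a spider: $\Delta$ legs (Definition~\ref{deffleg001}) emanate from $h$, and their lengths sum to $n-1\ge\Delta+3$.

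\emph{Step 1: bound $\sigma$ over all spiders.} Edges between two degree-$2$ vertices contribute $0$. Hence a leg of length $1$ contributes $(\Delta-1)^2$, and a leg of length $\ge2$ contributes exactly $(\Delta-2)^2+1$. If $a$ legs have length $1$, then
\[
\sigma(T)=a(\Delta-1)^2+(\Delta-a)\big[(\Delta-2)^2+1\big].
\]
Because $n>\Delta+1$, at least one leg is long, so $a\le\Delta-1$. Since $(\Delta-1)^2-(\Delta-2)^2-1=2\Delta-4>0$ for $\Delta\ge3$, this expression increases with $a$. Therefore
\[
\sigma(T)\le (\Delta-1)^3+(\Delta-2)^2+1 .
\]

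\emph{Step 2: build a competitor $T'$ with two big vertices.} Set $L=n-\Delta-3\ge1$. Take $h$ of degree $\Delta$ with $\Delta-1$ pendant leaves and one neighbor $x$ of degree $2$. Join $x$ to a vertex $w$ of degree $3$. Attach to $w$ one pendant leaf and one pendant path of length $L$. The vertex count is $1+(\Delta-1)+1+1+1+L=n$, and the maximum degree is $\Delta$, so $T'$ is admissible. Summing edge contributions gives
\[
\sigma(T')=(\Delta-1)^3+(\Delta-2)^2+(3-2)^2+(3-1)^2+c_L ,
\]
where $c_L$ is the contribution of the path of length $L$ at $w$. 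If $L=1$, that path is a second leaf at $w$ and $c_L=4$; if $L\ge2$, then $c_L=1+1=2$. In either case $\sigma(T')\ge\sigma(T)+5$ by Step~1. This contradicts the maximality of $T$, so $T$ has at least two big vertices.

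The main obstacle is choosing the right competitor; the computations themselves are routine. Natural first attempts, such as hanging a degree-$3$ vertex directly on $h$, or splitting off leaves of $h$, lose too much once $\Delta\ge5$. Such moves replace $(\Delta-1)^2$-terms by the smaller $(\Delta-3)^2$-terms. The construction above keeps all $\Delta-1$ leaves at $h$, in line with Step~1, and inserts the degree-$2$ buffer $x$. This is exactly the buffering phenomenon of Theorem~\ref{thm001bufferdominance}. The only part of the spider's value that changes is its long leg, worth $(\Delta-2)^2+1$. That leg is replaced by a structure worth at least $(\Delta-2)^2+1+5$. This works uniformly for all $\Delta\ge3$ and requires exactly $n\ge\Delta+4$, namely $L\ge1$.
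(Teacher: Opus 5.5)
Your proof is correct, and it takes a different route from the paper's. Both arguments begin by noting that a tree with a single big vertex is a spider centred at a vertex of degree $\Delta$.

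The paper then argues locally on the hypothetical maximal spider. Claim~1 excludes a leg of length $\ge 4$ by moving its end leaf one step inward. This move changes degrees, so its gain of $+8$ has to be computed directly rather than read off from Proposition~\ref{prop001pathmonotonicity}. Claim~2 excludes two legs of length $\ge 2$ by applying Proposition~\ref{prop001pathmonotonicity} to the path that runs from the hub-neighbour on one long leg, through the hub, to the leaf of the other. A vertex count then gives $n\le\Delta+3$.

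You instead evaluate $\sigma$ exactly on every spider using the per-leg values, which the paper only records later in Lemma~\ref{lem001perlegcontribution}. You maximise over the number $a$ of pendant legs and then beat that maximum with one explicit tree on exactly $n$ vertices.

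Your route is self-contained and needs no exchange lemma. It works uniformly for $\Delta\ge 3$ and shows exactly where $n\ge\Delta+4$ enters, namely $L\ge 1$. The paper's route yields a little more structure: a maximal tree with one big vertex has all legs pendant edges except possibly one of length $2$ or $3$.

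One small remark: your margin is in fact at least $6$, since $1+4+c_L\ge 7$ against the $+1$ in the spider bound. So ``$\ge\sigma(T)+5$'' is true but not tight.
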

\begin{proof}
Suppose toward a contradiction that $T\in \TT_{n(i,j),\,\Delta}$ has at most one big vertex. Since $\Delta\geqslant 3$, $T$ must have exactly one vertex $u$ of degree $\Delta$ and no
other vertex of degree $\ge 3$; thus $T$ consists of exactly $\Delta$ pending
paths attached at $u$.

\medskip

\noindent \textbf{Claim 1.} \textit{No pending path has length $\ge 4$.}  

Suppose one does, assume that $\mathcal P = u\cdots v_3v_2v_1v_0$. Let $T'\in \TT_{n(i,j),\,\Delta}$ where 
$T'=T-v_0v_1+v_0v_2$. This is a tree since $v_0v_1$ is a leaf edge and
$v_0v_2$ reattaches the same leaf one step further along the path; in $T'$,
$d(v_2)$ increases from $2$ to $3$ and $d(v_1)$ decreases from $2$ to $1$,
with all other degrees unchanged. According to Proposition~\ref{prop001pathmonotonicity},  $\sigma(T')-\sigma(T)>0$, contradicting maximality of $T$.

\medskip

\noindent \textbf{Claim 2.} \textit{At most one pending path has length $\geqslant 2$.}  

Suppose two distinct pending paths $v_0v_1\cdots u$ and $u v_2\cdots v_3$ both have length
greater than 2. Applying Proposition~\ref{prop001pathmonotonicity} to the subpath
$v_0\,v_1\cdots u\,v_2$  with $d(v_0)=1<d(v_2)=2$ forces among the contrapositive form of
the second claim of Proposition~\ref{prop001pathmonotonicity}. Thus, we should be established that $d(v_1)\leqslant d(u)$, which is automatic. Hence,  instead we apply the
proposition in the direction that constrains $v_1$ directly. If $d(u)=\Delta$
being the maximum possible degree forces, by the contrapositive if $d(v_1)>d(u)$. Then, $d(v_0)\leqslant d(v_2)$,
which is already true and gives no contradiction directly with the pair $(v_0,v_2)$ playing the role of $(u,v)$ where $d(v_2)>d(v_0)$. Since $d(u)=\Delta\geqslant 3> 2=d(v_1)$ when $\Delta\geqslant 3$, a contradiction. Hence at most one pending path has length
$\geqslant 2$.

\medskip

By proving both \textbf{Claims 1} and \textbf{2}, it follows that,  at most one pending path has length $3$, and all remaining
$\Delta-1$  pending paths have length $1$. Hence, $n\leqslant \Delta+3$, contradicting $n\geqslant \Delta+4$. This completes the proof.
\end{proof}

\begin{corollary}~\label{cor001m12zero}
If $n\ge \Delta+4$, then $m_{1,2}(T)=0$.
\end{corollary}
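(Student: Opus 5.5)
The corollary is a direct chaining of the two preceding lemmas. Let $T\in\TT_{n(i,j),\,\Delta}$ be a maximal tree on $n\geqslant\Delta+4$ vertices.

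\textbf{Step 1.} By Lemma~\ref{lem43twobigvertices}, $T$ contains at least two big vertices, that is, two vertices of degree at least $3$.

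\textbf{Step 2.} This is exactly the hypothesis of Lemma~\ref{lemm21zero}, which then gives $m_{1,2}(T)=0$: no leaf of $T$ is adjacent to a vertex of degree $2$.

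Both lemmas are stated for maximal trees, the standing assumption of this section, so the hypotheses match. Nothing beyond these two lemmas needs to be verified for the corollary itself.

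\textbf{Possible weak point in the lemma being invoked.} The only step I would re-examine is the one inside Lemma~\ref{lem43twobigvertices} that excludes the single-hub (spider) configuration. There, a leaf-reattachment $T'=T-v_0v_1+v_0v_2$ is justified by appeal to Proposition~\ref{prop001pathmonotonicity}. That proposition concerns degree-preserving edge exchanges, whereas this move changes degrees, so it does not apply as cited.

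To repair this, I would compute $\sigma(T')-\sigma(T)$ directly for each spider shape with $n=\Delta+4$ and with larger $n$. The goal would be to show that some pending path of length at least $2$ can be shortened, with its vertices regrouped into a second big vertex, in a way that strictly increases $\sigma$. This would contradict maximality and so establish that $T$ has at least two big vertices.

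Once that is secured, the corollary follows immediately.
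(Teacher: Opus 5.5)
Your proof is correct and follows the paper's route exactly: Lemma~\ref{lem43twobigvertices} gives two big vertices, and Lemma~\ref{lemm21zero} then gives $m_{1,2}(T)=0$. Your doubt about Claim~1 inside Lemma~\ref{lem43twobigvertices} is justified, since the reattachment changes degrees and so Proposition~\ref{prop001pathmonotonicity} does not apply, but the repair needs no case analysis over spider shapes: for a leg of length $\ge 4$ the vertex $v_3$ has degree $2$, so the affected edges contribute $0+0+1$ before ($v_3v_2,v_2v_1,v_1v_0$) and $1+4+4$ after ($v_3v_2,v_2v_1,v_2v_0$), a direct gain of $8$.
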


\begin{lemma}~\label{lem001atmostoneinternalleaf}
If $n\geqslant \Delta+4$, then at most one internal leaf of $T\in \TT_{n(i,j),\,\Delta}$ has degree strictly less than $\Delta$.
\end{lemma}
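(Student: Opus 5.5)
Let $T$ be a maximal tree with $n\geqslant \Delta+4$. By Lemma~\ref{lem43twobigvertices} it has at least two big vertices, and by Corollary~\ref{cor001m12zero} it satisfies $m_{1,2}(T)=0$. The plan is to argue by contradiction. Suppose $T$ has two distinct internal leaves $a$ and $b$ with $2\leqslant d(a),d(b)\leqslant \Delta-1$. Each internal leaf has exactly one non-leaf neighbour, so all its other neighbours are leaves. Since $m_{1,2}(T)=0$, a degree-$2$ internal leaf would have a leaf neighbour adjacent to a $2$-vertex, which is impossible. Hence $3\leqslant d(a),d(b)\leqslant\Delta-1$, and both $a$ and $b$ carry at least two pendant leaves. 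Without loss of generality assume $d(a)\geqslant d(b)$.

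\textbf{Main step: a leaf transfer.} Take a pendant leaf $\ell$ of $b$ and form $T'=T-b\ell+a\ell$. This is a tree in which $d(a)$ rises to $d(a)+1\leqslant\Delta$ and $d(b)$ drops to $d(b)-1\geqslant 2$. The maximum degree stays $\Delta$, because the degree-$\Delta$ vertex is neither $a$ nor $b$. Let $p_a,p_b$ be the non-leaf neighbours and write $\alpha=d(a)$, $\beta=d(b)$.
\begin{itemize}
\item The change at $a$ is $\alpha^2(\alpha-1)^2-(\alpha-1)\alpha^2$ plus the change on the edge $ap_a$.
\item The pendant-edge contribution at $a$ goes from $(\alpha-1)\cdot(\alpha-1)^2$ to $\alpha\cdot \alpha^2$, a gain of $3\alpha^2-3\alpha+1$.
\item At $b$ the pendant contribution goes from $(\beta-1)^3$ to $(\beta-2)^3$, a loss of $3\beta^2-9\beta+7$.
\item The two edges to $p_a,p_b$ change by $(\alpha+1-d(p_a))^2-(\alpha-d(p_a))^2=2(\alpha-d(p_a))+1$ and $(\beta-1-d(p_b))^2-(\beta-d(p_b))^2=-2(\beta-d(p_b))+1$.
\end{itemize}
The total is
\[
3\alpha^2-3\alpha-3\beta^2+9\beta-4+2\alpha-2\beta+2\big(d(p_b)-d(p_a)\big),
\]
that is, $3(\alpha^2-\beta^2)-(\alpha-\beta)+6\beta-4+2(d(p_b)-d(p_a))$.

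\textbf{Expected obstacle: controlling $d(p_b)-d(p_a)$.} Using $\alpha\geqslant\beta\geqslant3$, the only possibly negative term is $2(d(p_b)-d(p_a))$, which is at least $-2(\Delta-2)$ because $d(p_b)\geqslant 2$ and $d(p_a)\leqslant \Delta$. The difficulty is that $6\beta-4\geqslant 14$ does not dominate $2\Delta-4$ when $\Delta$ is large. To handle this, the plan is to exploit the freedom of choosing the direction of transfer:
\begin{itemize}
\item Moving a leaf from $a$ to $b$ instead gives the symmetric expression with the parent term reversed.
\item Summing the two directions eliminates the parent terms and yields $6\alpha+6\beta-8+\text{(quadratic terms)}$. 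Checking this carefully, the sum is $6\alpha+6\beta-2\geqslant 34>0$ when $\alpha=\beta$.
\item If $\alpha>\beta$, the term $3(\alpha^2-\beta^2)$ must be compared against the reversed direction, which loses the same amount. So instead choose the direction by the sign of $d(p_b)-d(p_a)$, enlarging whichever of $a,b$ has the parent of lower degree.
\end{itemize}
If this still leaves a case, I would fall back on Proposition~\ref{prop001pathmonotonicity} along the path $\ell_a\,a\cdots b\,\ell_b$. Since $d(\ell_a)=d(\ell_b)=1$, the proposition gives no constraint directly. It should instead be applied to the path from a leaf of $a$ to $p_b$, which should force $d(a)\leqslant d(p_a)$-type inequalities that bound the parent difference enough for one of the two directions to be strictly improving.

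In either case some $T'$ with $\sigma(T')>\sigma(T)$ exists, contradicting maximality. Hence at most one internal leaf has degree less than $\Delta$.
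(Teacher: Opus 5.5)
\textbf{Verdict: there is a genuine gap.} Your leaf transfer is the same move the paper uses, and your difference formula is correct. Write $D_{b\to a}$ for the change caused by moving a pendant leaf of $b$ onto $a$:
\[
D_{b\to a}=3\alpha^2-\alpha-3\beta^2+7\beta-4+2\big(d(p_b)-d(p_a)\big).
\]
This agrees with the paper's $\sigma_1+\sigma_2+\sigma_3$.

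The gap is the case $\alpha>\beta$, $d(p_a)>d(p_b)$. There your rule ``enlarge whichever of $a,b$ has the parent of lower degree'' moves a leaf from $a$ to $b$, and that move can decrease $\sigma$. For example, with $\alpha=6$, $\beta=3$, $d(p_a)=d(p_b)+1$ and $\Delta\geqslant 7$, one gets $D_{a\to b}=-44$. Your fallback does not close this case either. It is not carried out, and the path from a leaf $\ell_a$ of $a$ to $p_b$ only yields $d(q)\leqslant\alpha$ for the neighbour $q$ of $p_b$ on that path, which is not $p_a$ in general. So the closing sentence ``in either case some $T'$ with $\sigma(T')>\sigma(T)$ exists'' is unsupported exactly where the parent term is adverse. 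Separately, your averaging step is miscomputed: the sum is not $6\alpha+6\beta-2$ ``when $\alpha=\beta$'', and there are no leftover quadratic terms.

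Two fixes are available.

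\textbf{The paper's fix.} Apply Proposition~\ref{prop001pathmonotonicity} to the path from $a$ to $b$ itself, with endpoints $a,b$ and inner vertices $x=p_a$, $y=p_b$. Here $a$ and $b$ are not adjacent, since otherwise they would be the only non-leaf vertices and the maximum degree would be below $\Delta$. If $\alpha>\beta$, the proposition gives $d(p_a)\leqslant d(p_b)$. If $\alpha=\beta$, relabel so that this holds. Then
\[
D_{b\to a}\geqslant(\alpha-\beta)\big(3(\alpha+\beta)-1\big)+6\beta-4>0.
\]

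\textbf{Your own averaging idea, with the arithmetic redone.} The quadratic terms cancel along with the parent terms, so for \emph{all} $\alpha,\beta$
\[
D_{b\to a}+D_{a\to b}=6(\alpha+\beta)-8>0 .
\]
Both moves are admissible, because $\alpha+1,\beta+1\leqslant\Delta$ and no degree outside $\{a,b\}$ changes, so a degree-$\Delta$ vertex survives. Hence one of the two moves strictly increases $\sigma$. This version needs no ordering of $\alpha,\beta$, no case split, and no path monotonicity. It does not even use $m_{1,2}(T)=0$, since the identity already holds for $\alpha,\beta\geqslant 2$. It would be a cleaner proof than the paper's.
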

\begin{proof}
Since $n\geqslant\Delta+4$, $T$ has at least two distinct internal leaves. 
Suppose toward a contradiction that at least two internal leaves, consider $u$ and
$v$, satisfy $d(u),d(v)<\Delta$; and  $u,v$ at maximum possible
tree distance among all such pairs.

\medskip

\noindent \textbf{Case 1.} \emph{If $u,v$ adjacent.} Then $u,v$ are the only non-leaf vertices of $T$, any other non-leaf vertex would be internal and would have to be an
internal leaf adjacent to $u$ or $v$, contradicting that $u,v$ are the unique
maximum distance pair with sub maximal degree unless it too has degree  less than $\Delta$
and is farther, a contradiction to maximality of distance. Since $\max_w d(w)=\Delta$, at least one of $u,v$, in this case, assume that $u$, has degree
$\Delta$. Then $v$ is the only internal leaf of sub maximal degree, proving the claim in this case.

\medskip

\noindent \textbf{Case 2.} \emph{If $d_T(u,v)\geqslant 2$.} Denote the neighbors of $u$ by
$u_1,\dots,u_{d(u)}$, with $u_i$ a leaf for $i\geqslant 2$ and $u_1$ the neighbor on
the $u$ to $v$ path. 

Similarly $v_1,\dots,v_{d(v)}$ for $v$. Since
$n\geqslant \Delta+4$, Lemma~\ref{lem43twobigvertices} gives at least two big
vertices, and Corollary~\ref{cor001m12zero} then gives $m_{1,2}(T)=0$. Thus, every leaf
of $T$ is adjacent to a vertex $\omega$ of degree $d(\omega)\geqslant 3$; in particular $d(u),d(v)\geqslant 3$.

\medskip

\noindent \textbf{Case 3.} \emph{If $d(u)>d(v)$} by applying Proposition~\ref{prop001pathmonotonicity} to the path
$u_1\,u\cdots v\,v_1$   yields $d(u_1)\leqslant d(v_1)$. If $d(u)=d(v)$ and
$d(u_1)>d(v_1)$, relabel $u\leftrightarrow v$ to again obtain $d(u)\geqslant d(v)$ and
$d(u_1)\leqslant d(v_1)$.

Let $T'\in \TT_{n(i,j),\,\Delta}$ where $T' = T - vv_2 + uv_2$. We decompose $\sigma(T')-\sigma(T)=\sigma_1+\sigma_2+\sigma_3$ where:
\begin{itemize}
\item $\sigma_1$ collects the change from edges $uu_2,\dots,uu_{d(u)}$ together with $vv_3,\dots,vv_{d(v)}$ whose common endpoint $v$
decreases from $d(v)$ to $d(v)-1$, $d(v)-2$ such leaf edges, 
\begin{equation}~\label{eqq1sigma1lemma5}
\sigma_1\!=\! (d(v)\!-\!2)\Big[(d(v)\!-\!2)^2\!-\!(d(v)\!-\!1)^2\Big]\!+\!(d(u)\!-\!1)\Big[d(u)^2\!-\!(d(u)\!-\!1)^2\Big].
\end{equation}
\item $\sigma_2$ is the contribution of the moved edge itself, 
\begin{equation}~\label{eqq2sigma1lemma5}
\sigma_2 = d(u)^2 - (d(v)-1)^2.
\end{equation}
\item $\sigma_3$ is the contribution of the two path-edges $uu_1$ and $vv_1$,
\begin{equation}~\label{eqq3sigma1lemma5}
\sigma_3\! =\! \big[(d(u)\!+\!1)\!-\!d(u_1)\big]^2\! -\! \big[d(u)\!-\!d(u_1)\big]^2\!+\! \big[(d(v)\!-\!1)\!-\!d(v_1)\big]^2\!-\! \big[d(v)\!-\!d(v_1)\big]^2.
\end{equation}
\end{itemize}

Thus, by considering $\sigma_1 = (d(u)-d(v)+1)\big(2d(u)+2d(v)-5\big)$ and $d(u)\geqslant d(v)\geqslant 3$, both factors are positive. From \eqref{eqq2sigma1lemma5},
$d(u)\geqslant d(v)>d(v)-1$ gives $\sigma_2 = d(u)^2-(d(v)-1)^2>0$. According to~\eqref{eqq3sigma1lemma5} algebraically,
$\sigma_3 = 2d(u)-2d(v)+2d(v_1)-2d(u_1)+2$, which is $d(v_1)\geqslant d(u_1)$. Hence, according to Proposition~\ref{prop001pathmonotonicity} and based on~\eqref{eqq1sigma1lemma5} satisfying $\sigma(T')-\sigma(T)=\sigma_1+\sigma_2+\sigma_3>0$, contradicting maximality
of $T$.
\end{proof}

A tree with exactly one internal leaf has $n=\Delta+1$ vertices. In this case, for any tree
on $n\ge\Delta+4$ vertices has at least two internal leaves, and
Lemma~\ref{lem001atmostoneinternalleaf} is meaningful for the full range
$n\ge\Delta+4$. On the other hand, a tree on $n\leqslant 2\,\Delta-1$ vertices cannot
carry two vertices of degree $\Delta$ simultaneously. Hence,  the bound ``at most
one'' is best possible in the view $\Delta+4\leqslant n\leqslant 2\,\Delta-1$. Theorem
\ref{thm001internal} below shows that for $n\geqslant 2\,\Delta$ or
$n\geqslant 7$ if $\Delta=3$. This single exceptional low degree internal leaf cannot
occur at all.

\begin{theorem}~\label{thm001internal}
Let $T\in \TT_{n(i,j),\,\Delta}$ be a maximal tree with maximum degree $\Delta$. If $n\geqslant 7$ for $\Delta=3$, or $n\geqslant 2\,\Delta$ for $\Delta\geqslant 4$. Then, every internal leaf of $T$ has degree exactly $\Delta$.
\end{theorem}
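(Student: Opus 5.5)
We argue by contradiction, building on Lemma~\ref{lem001atmostoneinternalleaf}. Suppose $T$ is maximal, $n$ is in the stated range, and some internal leaf $w$ has $d(w)<\Delta$. Since $n\geqslant \Delta+4$ in both cases ($n\geqslant 7=\Delta+4$ for $\Delta=3$, and $2\Delta\geqslant\Delta+4$ for $\Delta\geqslant4$), the earlier results apply. Lemma~\ref{lem43twobigvertices} gives at least two big vertices. Corollary~\ref{cor001m12zero} gives $m_{1,2}(T)=0$, so every leaf is adjacent to a vertex of degree at least $3$, and in particular $3\leqslant d(w)\leqslant \Delta-1$. Lemma~\ref{lem001atmostoneinternalleaf} makes $w$ the \emph{unique} internal leaf of submaximal degree. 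Hence every other internal leaf has degree $\Delta$. As $T$ is not a star, it has at least two internal leaves, so at least one internal leaf $u\neq w$ has $d(u)=\Delta$.

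The plan is then to perform a leaf transfer that raises $d(w)$ at the expense of a suitable vertex, and to show that $\sigma$ strictly increases. The natural first attempt mirrors Case~3 of Lemma~\ref{lem001atmostoneinternalleaf}, with the roles reversed. Take the pair $(u,w)$ with $d(u)=\Delta>d(w)$. Proposition~\ref{prop001pathmonotonicity} on the path $u_1\,u\cdots w\,w_1$ gives $d(u_1)\leqslant d(w_1)$. Moving a pendant leaf from $w$ to $u$ is impossible, because $d(u)=\Delta$ already. So instead I would move a pendant leaf $\ell$ of $u$ onto $w$, i.e.\ set $T'=T-u\ell+w\ell$. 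Here $d(w)$ rises to at most $\Delta$ and $d(u)$ drops to $\Delta-1$, so the maximum degree stays $\Delta$ provided another $\Delta$-vertex exists. If $u$ is the only $\Delta$-vertex, the move makes $w$ a new $\Delta$-vertex when $d(w)=\Delta-1$, and otherwise I would iterate.

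The change $\sigma(T')-\sigma(T)$ splits as in \eqref{eqq1sigma1lemma5}--\eqref{eqq3sigma1lemma5}: leaf edges at $u$ and at $w$, the moved edge, and the two path edges $uu_1$ and $ww_1$. However, this direction moves mass from a larger degree to a smaller one, which is Schur-concave. It will generally \emph{decrease} $F$, so this move alone need not increase $\sigma$.

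The real leverage must therefore come from $n$ being large: $n\geqslant 2\Delta$ is exactly what allows a second vertex of degree $\Delta$ to exist. I would compare $T$ with a tree $T''$ in which the deficiency $\Delta-d(w)$ is absorbed elsewhere. The deficient leaves of $w$ are replaced by lengthening a leg, i.e.\ a degree-$2$ buffer, which Theorem~\ref{thm001bufferdominance} shows is favorable. Concretely, detach $w$'s pendant leaves and pendant structure, and reattach them so that $w$ becomes a degree-$\Delta$ internal leaf (by taking leaves from degree-$2$ paths or from lower-degree core vertices), or so that $w$ is absorbed into a buffer $2$-vertex. A cleaner variant uses a core vertex $z$ with $3\leqslant d(z)<\Delta$ or $d(z)=2$ if one exists. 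Proposition~\ref{prop001pathmonotonicity} restricts where such a vertex can sit relative to $w$. Shifting one leaf from the lowest-degree available vertex to $w$ is Schur-convex, so it increases $F$, and we would check that the $M_2$ change is dominated by a computation analogous to $\sigma_1+\sigma_2+\sigma_3$.

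The main obstacle is the case analysis on what exists besides $w$: whether there are other vertices of degree in $[3,\Delta-1]$, degree-$2$ vertices, or only $\Delta$-vertices and leaves. In the last case $n$ and $\Delta$ constrain the degree sequence via \eqref{eqsigmamax02def}, and a direct comparison with $\mathcal C_1/\mathcal C_2$-type trees is needed. The small case $\Delta=3$ (where $d(w)<3$ is already excluded by $m_{1,2}=0$, so $\Delta=3$ is immediate) is handled separately. That last observation is worth leading with: for $\Delta=3$, the bound $3\leqslant d(w)\leqslant\Delta-1$ is empty, which settles it. For $\Delta\geqslant4$, I would try first the leaf transfer from the nearest degree-$\Delta$ internal leaf along with rerouting one of its legs, and verify positivity using $d(w)\leqslant\Delta-1$ and $d(u_1)\leqslant d(w_1)$.
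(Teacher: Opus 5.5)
For $\Delta=3$ your argument is sound. For $n\geqslant 7=\Delta+4$, Corollary~\ref{cor001m12zero} gives $m_{1,2}(T)=0$. An internal leaf carries a pendant leaf, so its degree is at least $3=\Delta$.

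For $\Delta\geqslant 4$, however, you give a list of candidate strategies, not a proof. The one concrete exchange you write down moves a pendant leaf of the degree-$\Delta$ internal leaf $u$ onto $w$, and you correctly discard it as Schur-concave. After that, no transformation is specified and no sign of $\sigma(T')-\sigma(T)$ is computed. You yourself leave ``the case analysis on what exists besides $w$'' open. Two of your suggested ingredients cannot work as stated:
\begin{itemize}
\item When $m_{1,2}(T)=0$, degree-$2$ vertices carry no pendant leaves, so there are no ``leaves from degree-$2$ paths'' to take.
\item Theorem~\ref{thm001bufferdominance} only compares the asymptotic $\sigma$-densities of the two fixed families $\mathcal C_1,\mathcal C_2$. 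It supplies no local exchange in a given maximal tree, so it cannot contradict maximality.
\end{itemize}

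The missing ideas are the ones the paper's proof is built on.
\begin{enumerate}
\item The hypothesis $n\geqslant 2\Delta$ is used to force a core vertex. Without one, $T$ is a double star with centers of degrees $d(w)$ and $\Delta$, so $n=d(w)+\Delta\leqslant 2\Delta-1$.
\item Let $v$ be a core vertex, $y$ a non-leaf neighbor of $v$ farther from $w$, and $w_1$ a pendant leaf of $w$. Proposition~\ref{prop001pathmonotonicity} on the path $y\,v\cdots w\,w_1$ shows that $d(v)>d(w)$ would force $d(y)\leqslant d(w_1)=1$. Hence $d(v)\leqslant d(w)$ for every core vertex. Consequently every degree-$\Delta$ vertex is an internal leaf, and the natural donor in an exchange is a core vertex, not $u$.
\item A genuine case split is needed, with a different exchange and an explicit sign check in each case. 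The cases are: all core vertices have two non-leaf neighbors (of degree $2$ or not), or some core vertex has at least three.
\end{enumerate}

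Your heuristic ``shift one leaf from the lowest-degree vertex to $w$'' does not avoid this split. Take $w$ with $d(w)=4$, whose neighbor $p$ is the only core vertex, with $d(p)=3$ and $p$'s other two neighbors degree-$\Delta$ hubs. Then $n=2\Delta+5$, and the tree is consistent with all the earlier lemmas. Here $p$ has no leaf to shift. Moving a hub from $p$ onto $w$ changes $\sigma$ by $40-2\Delta<0$ once $\Delta>20$. What works is the paper's Subcase~2.1 move: detach $w$ from $p$ and hang it on a leaf of a hub, which gains $2\Delta-4>0$.
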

\begin{proof}
Suppose toward a contradiction that some
internal leaf $u$ of $T$ has $d(u)<\Delta$. By
Lemma~\ref{lem001atmostoneinternalleaf}, every other internal leaf of $T$
has degree exactly $\Delta$. Since $n\geqslant 2\,\Delta$ and $d(u)<\Delta$, $T$
contains at least one core vertex. Indeed, if $T$ had no core vertex, its non leaf vertices would be exactly its internal leaves, contributing at most
$n-(\text{leaves}) \leqslant \Delta$ vertices with total degree at most $\Delta^2$
edges attaching leaves plus $O(\Delta)$ internal edges, which is insufficient
to reach $n\geqslant 2\Delta$ together with $d(u)<\Delta$.

Let $v$ be any core vertex, and let $w$ be a neighbor of $v$ with $d_T(w,u)>d_T(v,u)$ and $w$ not a leaf such $w$ exists since $v$, being a core vertex, is not itself an internal leaf. Thus, it has at least two non leaf neighbors, at least one of which is farther from
$u$ than $v$ is). If $d(v)>d(u)$, according to Proposition~\ref{prop001pathmonotonicity}
applied to the path from $u$ through $v$ to $w$ where $d(w)\geqslant 2$,
contradicting that $w$ is not a leaf. Hence $d(v)\leqslant d(u)$ for every core
vertex $v$.

We now distinguish two cases according to the maximum number of non-leaf
neighbors of any core vertex.

\medskip

\noindent \textbf{Case 1.} \textit{Every core vertex has exactly two non-leaf neighbors.}

\smallskip

\textbf{Subcase~1.1.} \emph{Every core vertex has degree $2$.} Then $T$ has exactly
two internal leaves, $u$ and some $v$ with $d(v)=\Delta$ by considering 
Lemma~\ref{lem001atmostoneinternalleaf}, joined by a path $\mathcal P$ whose
interior vertices are precisely the core vertices, all of degree $2$.

If $\mathcal P$ has exactly one interior core vertex $w$. Then, $n=1+(d(u)-1)+1+1+(\Delta-1)+\cdots$.  Thus, by counting all vertices of $T$ gives
$n=d(u)+1+\Delta$. Since $n\geqslant 2\,\Delta$ and $d(w)=2$ one
obtains $d(u)=\Delta-1$.  Let $T'\in \TT_{n(i,j),\,\Delta}$ where  $T' = T-vw+vu$. Then, the edges $vw,uv$ contribute
$0-(\Delta-2)^2$ to $\sigma(T')-\sigma(T)$ where edge $vw$, contribution $0$ since
$d(v)=d(w)=\ldots$, the edge $uw$   contributes $(\Delta-1)^2-(\Delta-3)^2$, and the $\Delta-2$ leaves attached to $u$ contribute $(\Delta-1)^2-(\Delta-2)^2$ each. Therefore,
\begin{equation}~\label{eqq01subcase1a}
\sigma(T')-\sigma(T)=\Delta^2+\Delta-6,
\end{equation}
which is strictly positive for every $\Delta\geqslant 3$. Indeed
$\Delta^2+\Delta-6=(\Delta+3)(\Delta-2)>0$ for $\Delta>2$, a contradiction.

\medskip 

If $\mathcal P$ has more than one interior vertex, let $\mathcal P'=zwu$ be the subpath nearest $u$ where $w,z$ are core vertices of degree $2$, or $z=v$ if $\mathcal P$ has exactly two interior vertices. Let $T'\in \TT_{n(i,j),\,\Delta}$ where $T'=T-zw+zu$. The edges $zw,zu$ contribute $(d(u)-2)^2$, the edge $wu$ contributes $(d(u)-1)^2-(d(u)-2)^2$, and the $d(u)-1$ leaves attached to $u$ contribute $d(u)^2-(d(u)-1)^2$ each. Thus, 
\begin{equation}~\label{eqq01subcase1amulticore}
\sigma(T')-\sigma(T)\! =\! (d(u)\!-\!2)^2\!+\!(d(u)\!-\!1)^2\!-\!(d(u)\!-\!2)^2\!+\!(d(u)\!-\!1)\big[d(u)^2\!-\!(d(u)\!-\!1)^2\big], 
\end{equation}
which is holds $\sigma(T')-\sigma(T)=3d(u)^2-5d(u)+2,$ strictly positive for every $d(u)\geqslant 2$, it factors as $(3d(u)-2)(d(u)-1)$, both factors positive for $d(u)\geqslant 2$, a contradiction.

\textbf{Subcase~1.2.} \emph{ Some core vertex $v$ has degree $d(v)\geqslant 3$.} Assume that  $v$ to be the degree $d(v)\geqslant 3$ core vertex closest to $u$. Thus, every interior vertex of  $\mathcal P$ from $u$ to $v$ has degree $2$. Since $v$ has exactly two non leaf
neighbors   and $d(v)\geqslant 3$, $v$ has at least one pendant leaf $v_1$. Let $T'\in \TT_{n(i,j),\,\Delta}$ where $T'=T-v_1v+v_1u$. Since $v$ is a core vertex, $d(v)\leqslant d(u)$, the pair $v_1v,v_1u$ contributes strictly positively to $\sigma(T')-\sigma(T)$, and the edges of $\mathcal P$ likewise contribute a strictly positive sum where each edge of $\mathcal P$ sees one endpoint's ``target'' degree effectively shift from $d(v)$ toward
$d(u)$, and $d(v)\leqslant d(u)$ makes each such shift favorable.

Let $v_2$ be the non leaf neighbor of $v$ not on $\mathcal P$, and $u_1$ a leaf
attached to $u$. If $d(v_2)\geqslant d(v)$, the edge $v_2v$ contributes positively;
otherwise its contribution is bounded below by $(d(v)-2)^2-(d(v)-1)^2$. Hence, $\sigma_1$ of $v_2v$ and $u_1u$ satisfies
\begin{equation}~\label{eqq01subcase1bsigma1}
\sigma_1 \geqslant (d(v)-2)^2-(d(v)-1)^2 + d(u)^2-(d(u)-1)^2. 
\end{equation}
Eq.~\eqref{eqq01subcase1bsigma1} emphasizes that $\sigma_1 \geqslant 2d(u)-2d(v)+2 > 0$. Let $\mathcal L_v$ (resp.\ $\mathcal L_u$) be the leaf edges at $v$
(resp.\ $u$) other than $v_1v$ (resp.\ $u_1u$). Since $d(v)\leqslant d(u)$ and $v$
has one fewer available leaf slot than $u$ when $v_2v$ occupies a non-leaf
slot at $v$), $|\mathcal L_v|<|\mathcal L_u|$. According to~\eqref{eqq01subcase1amulticore},~\eqref{eqq01subcase1bsigma1} and
the positive contribution of $\mathcal P$'s edges,
\begin{equation}~\label{eqq01subcase1bfinal}
\sigma(T')-\sigma(T) > |\mathcal L_v|\Big[(d(v)-2)^2-(d(v)-1)^2\Big]
- |\mathcal L_u|\Big[d(u)^2-(d(u)-1)^2\Big]
> |\mathcal L_v|\big(2d(u)-2d(v)+2\big),
\end{equation}
where $\sigma(T')-\sigma(T) >0$ a contradiction.

\medskip

\noindent \textbf{Case 2.} \textit{Some core vertex $v$ has at least three non leaf neighbors.} Assume that $v$ to be such a vertex closest to $u$, and let $\mathcal P$ be the path from $u$ to $v$, all of whose interior vertices have degree $2$.

\smallskip

\textbf{Subcase~2.1.} \emph{Every non leaf neighbor of $v$ other than possibly the one
on $\mathcal P$ is an internal leaf.}  Then $v$ is the unique core vertex of $T$ and $u$
is adjacent to $v$. If $v$ has a pendant leaf $w$, let $T'\in \TT_{n(i,j),\,\Delta}$ where $T'=T-wv+wu$; since
$d(v)\leqslant d(u)$, the pair $wv,wu$ and the edge $uv$ both contribute positively,
each internal leaf neighbor $z$ of $v$   contributes positively via $vz$,
and only the leaf edges $\mathcal L_v$ at $v$  can contribute
negatively, each by at least $(d(v)-2)^2-(d(v)-1)^2$; since
$|\mathcal L_v|<|\mathcal L_u|$, the same telescoping as
\eqref{eqq01subcase1bfinal} gives $\sigma(T')-\sigma(T)>0$, a contradiction.
If instead $v$ has no pendant leaf, let $z$ be any leaf of $T$ not attached to
$u$ and consider $T'=T-uv+uz$. Then, 
\begin{equation}~\label{eqq01subcase2anoleaf}
\sigma(T')-\sigma(T) >
(d(v)-1)\Big[(\Delta\!-\!(d(v)\!-\!1))^2\!-\!(\Delta\!-\!d(v))^2\Big]
\!+\!(\Delta\!-\!2)^2\!-\!(\Delta\!-\!1)^2.
\end{equation}
Thus, $\sigma(T')-\sigma(T) >(d(v)-2)(2\Delta-2d(v)-1) > 0,$
since $3\leqslant d(v)<\Delta$ when $v$ is a core vertex with $d(v)\leqslant d(u)<\Delta$,
 by using $d(u)<\Delta$, a contradiction.

\smallskip

\textbf{Subcase~2.2.} \emph{If $v$ has a non leaf neighbor that is itself a core vertex.} 
Let $u_1,v_1$ be the neighbors of $u,v$ on $P$. If $v_1$ is the only
core vertex neighbor of $v$, then $v$'s remaining non-leaf neighbor $v_2$ is
an internal leaf, so $d(v_2)=\Delta$ according to
Lemma~\ref{lem001atmostoneinternalleaf}. Since $d(u)<\Delta=d(v_2)$,
Proposition~\ref{prop001pathmonotonicity} applied to the path from $u$ through
$u_1$ to $v_2$   forces $d(u_1)\geqslant d(v)$, contradicting
$d(u_1)\leqslant d(v)$.

Hence $v$ has a core vertex neighbor $v_2\ne v_1$. Then, assume that of minimum degree
among all such neighbors where $d(v_2)\leqslant d(u)$.
Let $v_3\ne v_1$ be the third non leaf neighbor of $v$ where minimality of $v_2$
gives $d(v_2)\le d(v_3)$. Let $T'=T-v_2v+v_2u$ and the edges of $\mathcal P$ contribute a
strictly positive sum  as $d(v)\leqslant d(u)$). In this case, let $u_2,u_3$ be two leaves at $u$, and let $\sigma_1$ be the combined contribution of $v_2v, v_2u, v_3v, u_2u,$ and $u_3u$ gives
\begin{equation}~\label{eqq001subcase2bsigma1}
\sigma_1 = \big(d(u)-d(v_2)\big)^2+3d(u)+d(v)\big(d(v)-2\big)
+3\big(d(u)-d(v_2)\big)+2d(v_3),
\end{equation}
where $\sigma_1>0$ and by using $d(u)\geqslant d(v_2)$. With $\mathcal L_v,\mathcal L_u$ the remaining leaf edges at $v,u$
respectively ($|\mathcal L_v|=d(v)-3\leqslant d(u)-3=|\mathcal L_u|$ since $d(v)\leqslant d(u)$), which \eqref{eqq01subcase1bfinal} yields
$\sigma(T')-\sigma(T)>0$, a contradiction.

In every case we reach a contradiction, so no internal leaf of $T$ can have
degree less than $\Delta$.
\end{proof}

\subsection{Bounds on Degree 2 Edges}
Proposition~\ref{propm22boundgeneral001} is not yet the sharpest possible bound; we will show   that in fact $m_{2,2}(T)\leqslant 1$ for every $\Delta\geqslant 3$ once $n$ is large enough, sharpening this bound by one.

\begin{proposition}~\label{propm22boundgeneral001}
If $n\ge\Delta+4$, then $m_{2,2}(T)\leqslant 2$.
\end{proposition}
\begin{proof}
Suppose toward a contradiction that $m_{2,2}(T)\ge3$ and let $e_i=u_iv_i$,
$i\in\{1,2,3\}$, be three edges with $d(u_i)=d(v_i)=2$.

\noindent \textbf{Case 1.} \emph{All three vertex-disjoint.} Let $w_i$ be the neighbor of $v_i$ other than $u_i$. Let $T, T'\in \TT_{n(i,j),\,\Delta}$ where 
\begin{equation}~\label{eqq01m22transformdisjoint}
T'=T - \mathcal P_3,
\end{equation}
 and $\mathcal P_3=u_3v_3-v_3w_3+u_3w_3-u_2v_2-v_2w_2+u_2w_2-v_1w_1+v_1v_2+v_2w_1+v_3v_1.$ The contributions of $\{v_iw_i,u_iw_i\}$ cancel for $i\in\{1,2\}$ each pair
of edges is replaced by an edge of the same two endpoint-degree values when $d(u_i)=d(v_i)=2$ throughout. The contribution of each $u_iv_i$ is $0$, and the contribution of $\{v_1w_1,v_2w_1\}$
cancels similarly. What remains is the pair of new edges $v_2w_1$ was already
counted, and according to Proposition~\ref{prop001pathmonotonicity} and \eqref{eqq01m22transformdisjoint} we obtain $\sigma(T')-\sigma(T)>0$, a contradiction.

\noindent \textbf{Case 2.} \emph{Two share an endpoint $v_1=u_2$, third disjoint.} Let
$T'=T-u_3v_3-v_3w_3+u_3w_3+v_3v_1$. The pair $\{v_3w_3,u_3w_3\}$ cancels, $u_3v_3$ contributes $0$, each of $u_1v_1,u_2v_2$ contributes $(3-2)^2-0=1$, giving $\sigma(T')-\sigma(T)>0$,
a contradiction.

\noindent \textbf{Case 3.}  \emph{Two pairs share endpoints $v_1=u_2$ and $v_2=u_3$.} With the same case in \textbf{Case 2}, 
$T'=T-u_3v_3-v_3w_3+u_3w_3+v_3v_1$, an identical computation where $v_1$ has degree $3$ from both the $u_1v_1$ side and gains a further edge $v_3v_1$, and $v_2$ similarly gives $\sigma(T')-\sigma(T)>0$, again a contradiction.

 Therefore, according to cases \textbf{Case 1, 2} and \textbf{3}, in every configuration we reach a contradiction. Then, $m_{2,2}(T)\leqslant 2$.
\end{proof}

Let  $T\in \TT_{n(i,j),\,\Delta}$ is a maximal tree with maximum degree $\Delta\geqslant 3$ and $n\geqslant n_0(\Delta)$ for a threshold to be determined; by
Theorem~\ref{thm001internal} we assume $n\geqslant 2\Delta$
and every internal leaf of $T$ has degree
$\Delta$. Our goal is to eliminate every vertex degree strictly between $2$ and $\Delta$.

The natural strategy for general $\Delta$ is an induction on $d$, for $d=3,4,\dots,\Delta-1$ by assuming $n_3=n_4=\cdots=n_{d-1}=0$ has already been established, we should be show that $n_d=0$. Then, fix $d$ with $3\leqslant d\leqslant \Delta-1$, and suppose $T$ contains an edge $uv$ with $d(u)=d(v)=d$. Assume that $k=\Delta-d\ge1$. The transformation used in \cite[Lemma~8]{DimitrovPaper2025} for $(d,\Delta)=(3,5)$ where $k=2$ relocates all of $u$'s neighbors other than $v$ onto $v$; since $u$ has $d-1$
such neighbors.  We generalize it as follows: relocate $\min(k,d-1)$ of $u$'s other neighbors onto $v$. We restrict attention here to the regime $k\leqslant d-1$, i.e.\ $d\geqslant \lceil(\Delta+1)/2\rceil$, in which exactly $k$ neighbors can be relocated so that $v$'s degree becomes exactly $\Delta$; where $m:=2d-\Delta\geqslant 1$ for $u$'s resulting degree.

\begin{lemma}~\label{lem001attempted8}
Let $u_1,\dots,u_k$ be the relocated neighbors of $u$, and let $T'\in \TT_{n(i,j),\,\Delta}$ be the
tree obtained from $T\in \TT_{n(i,j),\,\Delta}$ by deleting edges $u_iu$ where $i=1,\dots,k$ and adding
edges $u_iv$. Then
\begin{equation}~\label{eqq01attempted8bound}
\sigma(T')-\sigma(T) \;\geqslant\; k^2(5-\Delta) \;+\; (m-1)\,k\,(\Delta-3d+2).
\end{equation}
\end{lemma}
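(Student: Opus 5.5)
The plan is a direct edge-by-edge accounting of $\sigma(T')-\sigma(T)$, with every contribution bounded from below using only $1\le d(\cdot)\le\Delta$ and the structural facts already available: Theorem~\ref{thm001internal}, Corollary~\ref{cor001m12zero}, and the induction hypothesis $n_3=\cdots=n_{d-1}=0$. Under the transformation, $d(u)$ drops from $d$ to $d-k=m$, $d(v)$ rises from $d$ to $\Delta$, and every other degree is unchanged. So only three groups of edges change their contribution: the edge $uv$ itself, the $k$ relocated edges $u_iu\mapsto u_iv$, and the untouched edges at $u$ and at $v$. There are $d-1-k=m-1$ remaining neighbors $w$ of $u$ other than $v$, and $d-1$ neighbors $z$ of $v$ other than $u$. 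I would write
\[
\sigma(T')-\sigma(T)=\Sigma_{uv}+\Sigma_{\rm rel}+\Sigma_u+\Sigma_v
\]
and bound each term separately.

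First, $\Sigma_{uv}=(\Delta-m)^2-0=(2k)^2$, since $\Delta-m=2(\Delta-d)=2k$. Next,
\[
\Sigma_{\rm rel}=\sum_{i=1}^{k}\big[(\Delta-d(u_i))^2-(d-d(u_i))^2\big]=\sum_{i=1}^{k} k\,(\Delta+d-2d(u_i)).
\]
Using $d(u_i)\le\Delta$, this is at least $k^2(d-\Delta)=-k^3$. For the untouched neighbors $w$ of $u$, each term is $(m-d(w))^2-(d-d(w))^2=-k\,(2d(w)-d-m)$. Using $d(w)\le\Delta$, this gives the lower bound
\[
\Sigma_u\ge (m-1)\,k\,(d+m-2\Delta)=(m-1)\,k\,(3d-3\Delta)\quad\text{after substituting } m=2d-\Delta.
\]
For $\Sigma_v$, each term is $(\Delta-d(z))^2-(d-d(z))^2=k\,(\Delta+d-2d(z))$. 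Here the worst case is $d(z)=\Delta$, which gives $-k^2$ per neighbor. To avoid losing too much I would use that $v$ has at most one neighbor of degree $\Delta$ on the side of a hub path, and bound the rest via $d(z)\le d$ or $d(z)=2$. This is exactly where the monotonicity of Proposition~\ref{prop001pathmonotonicity} should be invoked along the path through $u,v$.

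The main obstacle is matching the paper's precise right-hand side $k^2(5-\Delta)+(m-1)k(\Delta-3d+2)$. My crude termwise bounds produce cubic-in-$k$ losses rather than the stated $k^2(5-\Delta)$. So the choice of which $k$ neighbors to relocate must be used: pick $u_1,\dots,u_k$ to be the neighbors of $u$ of smallest degree. Then either they are leaves, or (by the induction hypothesis together with $m_{1,2}=0$) they have degree $2$ or degree $\ge d$. This sharpens $\Sigma_{\rm rel}$ to roughly $k(\Delta+d-4)k$, and it forces the remaining neighbors $w$ to satisfy $d(w)\ge d(u_i)$.

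I would then combine $4k^2$ from $\Sigma_{uv}$ with the improved $\Sigma_{\rm rel}$ and a bound $\Sigma_v\ge -(\Delta-1)k^2+O(k)$. The aim is to collect all the $k^2$ terms into $k^2(5-\Delta)$ and the $\Sigma_u$ terms into $(m-1)k(\Delta-3d+2)$, the shift by $+2$ coming from $d(w)\le \Delta-1$ or from the parity/leaf cases. I expect the case analysis needed to justify these degree restrictions on the neighbors, and to keep the inequality in the stated direction, to be the delicate part. If the exact constants resist, I would first prove the inequality with the worst-case choices $d(u_i)=2$ and $d(w)=d(z)=\Delta$, and then verify that the resulting expression dominates the claimed bound algebraically.
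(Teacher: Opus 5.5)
Your split $\Sigma_{uv}+\Sigma_{\mathrm{rel}}+\Sigma_u+\Sigma_v$ is the right decomposition. Your formulas for $\Sigma_{uv}=4k^2$, for $\Sigma_{\mathrm{rel}}$ and for $\Sigma_v$ are correct. However, $\Sigma_u$ has a sign error. For a remaining neighbor $w$ of $u$,
\[
(m-d(w))^2-(d-d(w))^2=(d-m)\bigl(2d(w)-m-d\bigr)=k\bigl(2d(w)-m-d\bigr),
\]
which is \emph{increasing} in $d(w)$. The worst case is therefore $d(w)=1$, not $d(w)=\Delta$. It gives $\Sigma_u\ge (m-1)k(2-m-d)=(m-1)k(\Delta-3d+2)$, which is exactly the second term of the claimed bound; the ``$+2$'' simply comes from $d(w)\ge 1$. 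Your bound $\Sigma_u\ge (m-1)k(3d-3\Delta)$ is false in general. Take $\Delta=9$, $d=8$ (so $k=1$, $m=7$) with the six remaining neighbors of $u$ all leaves: then $\Sigma_u=6(36-49)=-78<-18$.

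The ``main obstacle'' you describe is not real, and the repairs you propose are unnecessary and, as stated, invalid. Your own crude bounds $\Sigma_{\mathrm{rel}}\ge -k^3$ and $\Sigma_v\ge -(d-1)k^2$ already sum to $-(k+d-1)k^2=-(\Delta-1)k^2$, because $k+d-1=\Delta-1$. Adding $\Sigma_{uv}=4k^2$ gives exactly $k^2(5-\Delta)$. This is the paper's proof. It treats all $\Delta-1$ edges at $v$ in $T'$ other than $uv$ uniformly (the $d-1$ old ones and the $k$ relocated ones). Each changes by $k(\Delta+d-2x)\ge -k^2$, and nothing beyond $1\le x\le\Delta$ is used.

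None of the following is needed: a choice of the $u_i$, the induction hypothesis, $m_{1,2}=0$, Proposition~\ref{prop001pathmonotonicity}, or maximality of $T$. Since the lemma is stated for an arbitrary set of relocated neighbors, restricting to smallest-degree ones would only prove a weaker statement.

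Your fallback of substituting $d(u_i)=2$ and $d(w)=d(z)=\Delta$ does not give a lower bound either. The extremal values are $d(u_i)=\Delta$ for $\Sigma_{\mathrm{rel}}$ and $d(w)=1$ for $\Sigma_u$. The auxiliary claims, that $v$ has at most one neighbor of degree $\Delta$ and that $\Sigma_{\mathrm{rel}}\approx k^2(\Delta+d-4)$, are never justified.
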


\begin{proof}
Assume that $\Delta-1$ edges incident to $v$, each seeing
its non $v$ endpoint's target degree change from $d$ to $\Delta$. Then,  each contributes $(x-\Delta)^2-(x-d)^2$ for some $x\in[1,\Delta]$ satisfying  
\begin{equation}~\label{eqq01groupA}
(x-\Delta)^2-(x-d)^2 = (\Delta-d)(\Delta+d-2x) = k(\Delta+d-2x),
\end{equation}
which is minimized over $x\in[1,\Delta]$ at $x=\Delta$, Eq.~\eqref{eqq01groupA} giving $k(\Delta+d-2\Delta)=k(d-\Delta)=-k^2$. Thus, by summing  over all
$\Delta-1$ such edges gives $-(\Delta-1)k^2$.

If $m-1$ edges remaining at $u$, each seeing $u$'s target degree
change from $d$ to $m$. Then,  each contributes $(x-m)^2-(x-d)^2$, and  since $m-d=-k$ according to~\eqref{eqq01groupA},
\begin{equation}~\label{eqq02groupB}
(x-m)^2-(x-d)^2 = -k(m+d-2x) = k(2x-m-d),
\end{equation}
minimized at $x=1$, giving $k(2-m-d)$. Thus, by summing over $m-1$ such edges gives $(m-1)k(2-m-d)$.  From~\eqref{eqq02groupB} the edge $uv$ contributes $(m-\Delta)^2-(d-d)^2=4k^2$ exactly, since $m-\Delta=-2k$. Then,  $-(\Delta-1)k^2+4k^2 = k^2(5-\Delta)$. Assume that $m=2d-\Delta$, according to~\eqref{eqq02groupB}, $2-m-d=2-(2d-\Delta)-d=\Delta-3d+2$,
gives the stated form \eqref{eqq01attempted8bound}.
\end{proof}

Actually, the coefficient $k^2(5-\Delta)$ is strictly negative for every $\Delta>5$, regardless of $d$. For example, at $\Delta=6$, $d=\lceil7/2\rceil=4$, the bound
\eqref{eqq01attempted8bound} evaluates to
$4(5-6)+1\cdot2\cdot(6-12+2)=-12<0$; at $\Delta=9$, $d=5$ ($k=4,m=1$):
$16(5-9)+0=-64<0$. In every case we checked with $\Delta>5$, the worst case
bound is negative. That is meaning the direct generalization of the
transformation in \cite[Lemma~8]{DimitrovPaper2025} does not, by itself,
establish a contradiction for $\Delta\ne5$. An adversarial choice of the neighbor degrees in~\eqref{eqq01groupA} and \eqref{eqq02groupB} can, as far as this bound shows, make
$\sigma(T')-\sigma(T)<0$. Thus, maximality of $T$ gives no contradiction from this transformation alone.

This is a genuine mathematical obstruction, not merely a matter of sharper bookkeeping where the coefficient $(5-\Delta)$ multiplying $k^2$ in \eqref{eqq01attempted8bound} arises because~\eqref{eqq01groupA} has exactly $\Delta-1$
terms. We record this as the central open problem of the general $\Delta$ characterization.

\begin{conjecture}~\label{conj002mddzero}
For every $\Delta\ge3$ and every $d$ with $3\le d\le\Delta-1$, there exists $n_1(\Delta,d)$ such that every maximal tree $T\in \TT_{n(i,j),\,\Delta}$ on $n\geqslant n_1(\Delta,d)$
vertices with maximum degree $\Delta$ satisfies $m_{d,d}(T)=0$.
\end{conjecture}

Conjecture~\ref{conj002mddzero} is supported by three independent lines of evidence, none of which by itself constitutes a proof. 
\begin{enumerate}
    \item It is a theorem for $(\Delta,d)=(4,3)$ and $(\Delta,d)=(5,3),(5,4)$ (see~\cite{VukicevicEtAl2024,DimitrovPaper2025}).
\item  The asymptotic hub connector family comparison of
Section~\ref{sec03framework} shows that degree $2$ connectors strictly dominate degree $d$ connectors, $3\leqslant d\leqslant \Delta-1$, in $\sigma$ density as $n\to\infty$, for every
$\Delta\ge4$.

\item  Conjecture~\ref{conj002mddzero} for $\Delta\ne 5$ requires a
 qualitatively different transformation, plausibly one that
relocates neighbors from  both  $u$ and $v$ symmetrically toward two distinct  existing degree $\Delta$ hubs elsewhere in the tree.
\end{enumerate}

%\subsection{Bounding $m_{2,2}$ and $m_{\Delta,\Delta}$}~\label{subchart001}

\subsection{%
  Bounding \texorpdfstring{$m_{2,2}$}{m(2,2)}
  and \texorpdfstring{$m_{\Delta,\Delta}$}{m(Delta,Delta)}%
}\label{subchart001}

In contrast to Proposition~\ref{propm22boundgeneral001} the bounds on $m_{2,2}$ and
$m_{\Delta,\Delta}$ generalize with only mild restrictions, because their
proofs involve only the two  surviving degrees $2$ and $\Delta$, not the problematic intermediate range. Both results below are conditional on property $\mathcal P_1^\Delta$. 

\begin{proposition}~\label{lem001m22sharpgeneral}
Let $T\in \TT_{n(i,j),\,\Delta}$ be a maximal tree with maximum degree $\Delta$, $3\leqslant\Delta\leqslant 8$,
$n\geqslant\Delta+4$, satisfying $\mathcal P_1^\Delta$. Then $m_{2,2}(T)\leqslant 1$.
\end{proposition}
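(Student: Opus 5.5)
Assume, toward a contradiction, that $m_{2,2}(T)\geqslant 2$. By Proposition~\ref{propm22boundgeneral001} we then have $m_{2,2}(T)=2$, and by $\mathcal P_1^\Delta$ every vertex of $T$ has degree in $\{1,2,\Delta\}$. Since $n\geqslant\Delta+4$, Corollary~\ref{cor001m12zero} gives $m_{1,2}(T)=0$. Hence every degree $2$ vertex has both neighbours in $\{2,\Delta\}$, so degree $2$ vertices occur only in maximal $2$-chains joining two hubs. A chain with $r$ interior degree $2$ vertices carries $r-1$ edges of type $(2,2)$. Thus $m_{2,2}=2$ forces one of two configurations: (A) a single hub-to-hub chain with three interior degree $2$ vertices, or (B) two distinct chains, each with exactly two interior degree $2$ vertices.

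The plan is to kill each configuration with an explicit local transformation preserving the degree multiset, or at least the property that all degrees lie in $\{1,2,\Delta\}$, and then compute $\sigma(T')-\sigma(T)$ exactly. In both configurations every edge has type $(1,\Delta)$, $(2,\Delta)$, $(2,2)$ or $(\Delta,\Delta)$, so the change is a finite polynomial in $\Delta$.

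\textbf{Configuration (A): the chain $h\,a\,b\,c\,h'$.} I will detach $b$ by deleting $ab$ and $bc$ and adding $ac$. I will then reinsert $b$ as a buffer on some hub--hub edge $xy$ with $d(x)=d(y)=\Delta$, if such an edge exists; this replaces $(\Delta-\Delta)^2=0$ by $2(\Delta-2)^2$. The removed $(2,2)$ edges contribute $0$, so the gain is $2(\Delta-2)^2>0$. If $m_{\Delta,\Delta}=0$, I will instead insert $b$ between a hub and one of its leaves. This creates edges of types $(\Delta,2)$ and $(2,1)$ and loses one $(\Delta,1)$ edge, for a change of $(\Delta-2)^2+1-(\Delta-1)^2=4-2\Delta<0$, which is not useful. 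So the alternative is to trade: convert $b$ together with a leaf into a structure that creates a new hub. I expect this to be the main obstacle. My first attempt will be a two-vertex move: take $b$ and the neighbouring degree $2$ vertex, shorten the chain to a single buffer, and use the two freed vertices to turn a pendant leaf into a leg of length $2$ at a hub. By Corollary~\ref{cor04legslength2} such legs are legitimate. I will compute the resulting change as a quadratic in $\Delta$ and check its sign for $3\leqslant\Delta\leqslant 8$.

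\textbf{Configuration (B): two chains $h_1\,a_1\,b_1\,h_2$ and $h_3\,a_2\,b_2\,h_4$.} I will move $b_2$ out of the second chain by deleting $a_2b_2$ and $b_2h_4$ and adding $a_2h_4$. I will then attach $b_2$ as a leaf at $a_1$, turning $a_1$ into a vertex of degree $3$, or use a similar rearrangement. This violates $\mathcal P_1^\Delta$, so comparing against maximality is still legitimate, because maximality is over all trees. The bookkeeping is routine. The edge $(2,2)$ becomes $(2,\Delta)$, giving $+(\Delta-2)^2$. The moves at $a_1$ change $(\Delta,2)$ to $(\Delta,3)$ and $(2,2)$ to $(3,2)$, and add a new edge $(3,1)$ contributing $4$.

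The net change is an explicit polynomial $p(\Delta)$. The restriction $\Delta\leqslant 8$ in the statement strongly suggests that $p(\Delta)>0$ holds exactly on the range $3\leqslant\Delta\leqslant 8$ and fails beyond it. So the final step is to verify the sign of $p$, together with the analogous polynomial from (A), at $\Delta=3,\dots,8$, which can be done by direct evaluation. The hardest part will be choosing a transformation in (A) whose gain stays positive over the whole range; I will fall back on the $(\Delta,\Delta)$-buffering move whenever $m_{\Delta,\Delta}(T)\geqslant 1$.
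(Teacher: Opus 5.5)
Your reduction to configurations (A) and (B) is sound. So is the buffer move when $m_{\Delta,\Delta}(T)\ge 1$, which gains $2(\Delta-2)^2$ in (B) as well. The gap is the case $m_{\Delta,\Delta}(T)=0$, and both of your moves for it fail.

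In (A), collapsing $h\,a\,b\,c\,h'$ to $h\,a\,h'$ only deletes $(2,2)$ edges worth $0$. Turning a pendant edge into a leg of length $\ge 2$ then replaces $(\Delta-1)^2$ by $(\Delta-2)^2+1$, which is a \emph{loss} of $2\Delta-4$, and it also creates a $(1,2)$ edge. Corollary~\ref{cor04legslength2} concerns $\Delta$-minimal trees, so it points the wrong way.

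In (B) your bookkeeping omits an edge. Detaching $b_2$ deletes $a_2b_2$ (worth $0$) and $b_2h_4$ (worth $(\Delta-2)^2$), and adds $a_2h_4$ (worth $(\Delta-2)^2$), so that step is neutral rather than $+(\Delta-2)^2$. The true total is $(\Delta-3)^2-(\Delta-2)^2+1+4=10-2\Delta$. This is positive only for $\Delta\in\{3,4\}$, zero at $\Delta=5$, and negative beyond. Inferring the sign pattern of $p(\Delta)$ from the hypothesis $\Delta\le 8$ does not replace computing it.

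The idea you are missing is the paper's: spend degree-$2$ vertices to promote a pendant leaf $z_1$ of a hub into a new hub. The paper closes both doubled paths into direct hub--hub edges and hangs $u,v,a,b$ on $z_1$. Its difference $3(\Delta-1)^2-4(\Delta-2)^2$ is where the range $\Delta\le 8$ comes from. That formula tacitly assumes $z_1$ ends with degree $\Delta$, which with four new neighbours happens only for $\Delta=5$.

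Done correctly, you must free $\Delta-1$ vertices: the two surplus buffers plus $\Delta-3$ obtained by unbuffering hub--hub paths. The gain is then $(\Delta-2)(\Delta-1)^2-2(\Delta-3)(\Delta-2)^2=(\Delta-2)(8\Delta-\Delta^2-11)$. This is positive only for $3\le\Delta\le 6$, and only when $T$ has enough buffered hub--hub paths.

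This limitation is not a matter of technique: the statement is false for $\Delta\in\{6,7,8\}$. Take $\Delta=6$, $n=15$, and join two $6$-vertices, each carrying five pendant leaves, by a path with three internal $2$-vertices. Then $\sigma=10\cdot 25+2\cdot 16=282$, $\mathcal P_1^\Delta$ holds, and $m_{2,2}=2$. This tree is maximal:
\begin{itemize}
\item a single $6$-vertex gives $\sigma\le 6\cdot 25+8\cdot 16=278$;
\item three $6$-vertices need at least $17$ vertices;
\item with two $6$-vertices, every other arrangement gives at most $280$. These are adjacent hubs, or a connecting path with one or two internal vertices plus at most two extra vertices.
\end{itemize}
The analogous trees with $n=2\Delta+3$ are counterexamples for $\Delta=7,8$ by the same count. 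So no choice of transformation completes your plan on the stated range. What survives is the case $3\le\Delta\le 6$ with $n$ large enough that $\Delta-3$ buffered hub--hub paths are available.
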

\begin{proof}
Let $z$ be an internal leaf (degree $\Delta$ by
Theorem~\ref{thm001internal}) with leaf-neighbor $z_1$, and
let $T'$ be obtained by deleting all edges at $u,v,a,b$ and adding
$u_1v_1,\,a_1b_1,\,uz_1,\,vz_1,\,az_1,\,bz_1$. Each of
$u,v,a,b$ becomes a leaf attached to $z_1$, which rises from degree $1$ to degree $\Delta$. Then,  each of $u_1,v_1,a_1,b_1$ retains degree $\Delta$ and  the edge
$zz_1$ changes from contributing $(\Delta-1)^2$ to contributing $0$),
\begin{equation}~\label{eqq001m22sharpdiff}
\sigma(T')-\sigma(T)= 4(\Delta-1)^2 - 4(\Delta-2)^2 - (\Delta-1)^2.
\end{equation}
Thus,  for every integer $\Delta$ with $2\leqslant\Delta\leqslant 8$, giving $\sigma(T')-\sigma(T),$ a contradiction with the maximality of $T$ throughout this range.
\end{proof}

For $\Delta\ge9$, \eqref{eqq001m22sharpdiff} is non positive (e.g.\
$-4$ at $\Delta=9$), so this specific transformation fails to certify $m_{2,2}\leqslant 1$.

\begin{proposition}~\label{prop001mDeltaDeltageneral}
Let $T\in \TT_{n(i,j),\,\Delta}$ be a maximal tree with maximum degree $\Delta\geqslant 5$, satisfying
$\mathcal P_1^\Delta$ and $\mathcal P_2^\Delta$, with $n$ sufficiently large that an internal leaf $u$ and $\Delta-1$ pairwise vertex-disjoint edges $a_1b_1,\dots,a_{\Delta-1}b_{\Delta-1}$ with $d(a_i)=d(b_i)=\Delta$, disjoint from the edges at $u$, where $m_{\Delta,\Delta}(T)\ge\Delta$. Then, $m_{\Delta,\Delta}(T)\leqslant \Delta-1$.
\end{proposition}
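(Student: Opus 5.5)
Argue by contradiction: suppose $m_{\Delta,\Delta}(T)\geqslant\Delta$. By the hypothesis this gives an internal leaf $u$, which has degree $\Delta$ by Theorem~\ref{thm001internal}. It also gives $\Delta-1$ pairwise vertex-disjoint hub--hub edges $a_ib_i$, $1\le i\le\Delta-1$, disjoint from the edges at $u$. Under $\mathcal P_1^\Delta$ every vertex has degree in $\{1,2,\Delta\}$, and by Corollary~\ref{cor001m12zero} every leaf hangs on a degree-$\Delta$ vertex. So $u$ carries $\Delta-1$ pendant leaves $\ell_1,\dots,\ell_{\Delta-1}$ and one non-leaf neighbour $u_0$. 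The plan is the analogue of the buffering move behind Theorem~\ref{thm001bufferdominance}: convert each hub--hub edge into a hub--$2$--hub path, using the pendant leaves of $u$ as the buffer vertices.

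The transformation $T'$ is as follows. For each $i$, delete $u\ell_i$ and $a_ib_i$, and add $a_i\ell_i$ and $\ell_ib_i$. Then each $\ell_i$ has degree $2$, and $a_i,b_i$ keep degree $\Delta$. The vertex $u$ now has degree $1$ and hangs as a leaf on $u_0$. To check that $T'$ is a tree: removing $a_ib_i$ splits a component into two pieces, and the isolated vertex $\ell_i$ reconnects them. Disjointness of the chosen edges from each other and from the star at $u$ guarantees that these $\Delta-1$ operations do not interfere. The maximum degree stays $\Delta$, since at least one other hub survives.

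Next, compute $\sigma(T')-\sigma(T)$ edge by edge.
\begin{itemize}
\item The $\Delta-1$ leaf edges at $u$, each contributing $(\Delta-1)^2$, disappear.
\item The $\Delta-1$ edges $a_ib_i$ contribute $0$ both before and after.
\item The $2(\Delta-1)$ new edges $a_i\ell_i$, $\ell_ib_i$ each contribute $(\Delta-2)^2$.
\item The edge $uu_0$ changes from $(\Delta-d(u_0))^2$ to $(d(u_0)-1)^2$.
\end{itemize}
With $d(u_0)\in\{2,\Delta\}$, and $\mathcal P_2^\Delta$ (internal leaves adjacent to the degree-$2$ connectors) used to fix $d(u_0)$, the total gain is
\[
(\Delta-1)\bigl[2(\Delta-2)^2-(\Delta-1)^2\bigr]+(d(u_0)-1)^2-(\Delta-d(u_0))^2 .
\]
The bracket equals $\Delta^2-6\Delta+7$, which is positive exactly for $\Delta\geqslant5$. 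This matches the hypothesis $\Delta\ge5$ and gives $\sigma(T')>\sigma(T)$, contradicting maximality.

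The main obstacle is the boundary term at $u_0$. If $d(u_0)=2$, that term is $1-(\Delta-2)^2$, and it must be absorbed by $(\Delta-1)(\Delta^2-6\Delta+7)$. For $\Delta=5$ this is $8-8=0$, which is not strict. In that case I would refine the move. One option is to buffer only $\Delta-2$ edges and reattach the last leaf of $u$ appropriately. Another is to also route $u$ itself as a buffer onto a further hub--hub edge, which exists since $m_{\Delta,\Delta}\ge\Delta$. That option explains why $\Delta$ such edges, rather than $\Delta-1$, are assumed: $u$ becomes a degree-$2$ buffer with contribution $2(\Delta-2)^2$ in place of a lost $(\Delta-2)^2$ at $u_0$, making the gain strictly positive.
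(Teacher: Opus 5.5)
Your transformation and gain formula are the paper's. The paper deletes the $\Delta-1$ leaf edges at $u$ and the edges $a_ib_i$, and adds $a_iu_i,u_ib_i$. It writes the gain as $(\Delta-1)\bigl[\Delta^2-7\Delta+6+2d(p)\bigr]$, which is your expression with $p=u_0$, and bounds it at $d(p)=2$ by $(\Delta-1)(\Delta-2)(\Delta-5)$. That bound is strict only for $\Delta\geqslant 6$. So the case $\Delta=5$, $d(u_0)=2$ that you flag is a real hole, and the paper's proof leaves it open too.

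Three smaller points on the main argument. Your sentence claiming $\sigma(T')>\sigma(T)$ is premature, since the $u_0$ term can be negative. For $\Delta\geqslant 6$ you should state explicitly that the gain is $(\Delta-1)(\Delta-2)(\Delta-5)>0$, rather than that it ``must be absorbed''. Also, $\mathcal P_2^\Delta$ does not fix $d(u_0)$: it only gives $d(u)=\Delta$, and the split $d(u_0)\in\{2,\Delta\}$ comes from $\mathcal P_1^\Delta$.

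For $\Delta=5$, $d(u_0)=2$, your first repair does not help. If $u$ keeps its last leaf, the gain is $3\cdot 2-15-9=-18$; moving that leaf onto $u_0$ or onto another degree-$2$ vertex is also negative. Your second repair works, and it should be written out.

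\emph{Why the construction is valid.} The extra hub--hub edge is distinct from the other $\Delta-1$ because $m_{\Delta,\Delta}(T)\geqslant\Delta$. It need not be vertex-disjoint from them. Deleting the pendant star on $\{u,\ell_1,\dots,\ell_{\Delta-1}\}$ and then subdividing $\Delta$ distinct edges of the remaining tree gives a tree in which every hub keeps degree $\Delta$. None of these edges meets the star, because $d(u_0)=2$.

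\emph{The missing edge in your accounting.} You omit the edge $u_0w$, where $w$ is the other neighbour of $u_0$. Here $u_0$ drops to degree $1$, and $d(w)\in\{2,\Delta\}$ by $\mathcal P_1^\Delta$ and $m_{1,2}(T)=0$. So this edge gains $g=2d(w)-3\geqslant 1$.

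\emph{The total.} The gain is $(\Delta-1)(\Delta^2-6\Delta+7)+(\Delta-2)^2+g$. At $\Delta=5$ this equals $17+g>0$, which closes the case.
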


\begin{proof}
Let $u$ be an internal leaf with leaf neighbors $u_1,\dots,u_{\Delta-1}$ and unique non leaf neighbor $p$. Suppose
$m_{\Delta,\Delta}(T)\geqslant\Delta$, and assume that the $\Delta-1$ disjoint edges $a_ib_i$ as above. Let
$\mathcal L^-=\{u_iu: i=1,\dots,\Delta-1\}\cup\{a_ib_i:i=1,\dots,\Delta-1\}$ and
$\mathcal L^+=\{a_iu_i,\,u_ib_i:i=1,\dots,\Delta-1\}$, and consider $T'=T-\mathcal L^-+\mathcal L^+$. Then, each $u_i$ rises from degree $1$ to degree $2$ had attached to $a_i,b_i$, each of
degree $\Delta$ and $u$ falls from degree $\Delta$ to degree $1$ where retaining only the edge $up$. Since $(1-d(p))^2-(\Delta-d(p))^2=(\Delta-1)\bigl(2d(p)-\Delta-1\bigr)$, 
\begin{equation}~\label{eqqtest001}
\sigma(T')-\sigma(T)=(\Delta-1)\left[\Delta^2-7\Delta+6+2d(p)\right].
\end{equation}
Since $p$ is a non leaf vertex, $d(p)\geqslant 2$ under $\mathcal P_1^\Delta$, so this term is minimized
 at $d(p)=2$, giving $(\Delta-1)(3-\Delta)$. Thus, by substituting into \eqref{eqqtest001},
\begin{equation}~\label{eqqtest002}
\sigma(T')-\sigma(T)\geqslant (\Delta-1)(\Delta-2)(\Delta-5),
\end{equation}
which is strictly $\sigma(T')-\sigma(T)>0$ for every $\Delta>5$. Thus, $m_{\Delta,\Delta}(T)\leqslant \Delta-1$.
\end{proof}

For $\Delta\ge3$, define:
\begin{align*}
\mathcal P_1^\Delta :&\ T\in \TT_{n(i,j),\,\Delta} \text{ contains only vertices of degree } 1,\,2,\,\Delta;\\
\mathcal P_2^\Delta :&\ \text{all internal leaves of } T \text{ have degree } \Delta;\\
\mathcal P_3^\Delta :&\ m_{\Delta,\Delta}(T)\le\Delta-1,  m_{2,2}(T)\leqslant 2;\\
\mathcal P_4^\Delta:&\ \text{if } m_{2,2}(T)=1, \text{ then } m_{\Delta,\Delta}(T)=0.
\end{align*}

\begin{theorem}~\label{thm03partialgeneral}
Let $T\in \TT_{n(i,j),\,\Delta}$ be a maximal tree with maximum degree $\Delta\geqslant 3$ and
$n\geqslant \max(2\Delta,\,n_1(\Delta,3),\dots,n_1(\Delta,\Delta-1))$ with $n_1(\Delta,d)$ as in Conjecture~\ref{conj002mddzero}. Then,
\begin{enumerate}
\item  $T$ satisfies $(P_2^\Delta)$ unconditionally, for every
$\Delta\geqslant 3$;
\item  $T$ satisfies $\mathcal P_1^\Delta$ if Conjecture~\ref{conj002mddzero}
holds for $\Delta \in \{4,5\}$; 
\item  conditional on (2), $T$ satisfies the coarse form of
$\mathcal P_3^\Delta$ where $m_{\Delta,\Delta}\leqslant \Delta-1$ for every $\Delta\geqslant 5$, and the sharpened form
$m_{2,2}\leqslant 1$ for $3\leqslant \Delta\leqslant 8$, and $m_{2,2}\leqslant 2$ unconditionally for every $\Delta\geqslant 3$.
\end{enumerate}
\end{theorem}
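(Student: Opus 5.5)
The theorem is an assembly of the results already obtained in Section~\ref{sec01generalprops}, so I will prove it item by item, citing the relevant earlier statement and checking that its hypotheses hold under $n\geqslant \max(2\Delta,n_1(\Delta,3),\dots,n_1(\Delta,\Delta-1))$.

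\emph{Item (1).} Since $n\geqslant 2\Delta$, we have $n\geqslant \Delta+4$ for $\Delta\geqslant 4$, so Lemma~\ref{lem43twobigvertices}, Corollary~\ref{cor001m12zero} and Lemma~\ref{lem001atmostoneinternalleaf} all apply. Theorem~\ref{thm001internal} then gives $\mathcal P_2^\Delta$ directly. For $\Delta=3$ we need $n\geqslant 7$. I will use that the thresholds $n_1(3,\cdot)$ may be taken to be at least $7$, or else treat $n=6$, $\Delta=3$ by a finite check of the few trees involved.

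\emph{Item (2).} Assume Conjecture~\ref{conj002mddzero} holds for the relevant $(\Delta,d)$; for $\Delta\in\{4,5\}$ it is a theorem by \cite{VukicevicEtAl2024,DimitrovPaper2025}. Then $m_{d,d}(T)=0$ for every $3\leqslant d\leqslant\Delta-1$, and I must upgrade this to $n_d(T)=0$. I will argue by downward induction on $d$, letting $w$ be a vertex of degree $d$ with $d$ maximal among intermediate degrees.
\begin{itemize}
\item By $\mathcal P_2^\Delta$, $w$ is a core vertex, so it has at least two non-leaf neighbours.
\item Proposition~\ref{prop001pathmonotonicity}, applied along paths joining $w$ to internal leaves (which have degree $\Delta$ by $\mathcal P_2^\Delta$), should force a neighbour of $w$ to also have degree $d$.
\item That neighbour produces an edge counted by $m_{d,d}$, contradicting $m_{d,d}(T)=0$.
\end{itemize}
This passage from $m_{d,d}=0$ to $n_d=0$ is the \textbf{main obstacle}. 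Path monotonicity only compares pairs, so I expect to need a pigeonhole argument over the non-leaf neighbours of $w$. If that fails, I will use Lemma~\ref{lem001oneinterior}-type convexity exchanges, moving a leaf from a lower-degree neighbour onto $w$, to show that two intermediate-degree vertices cannot coexist. A lone intermediate vertex would then be removed by the exchange argument of Subcase~2.1 of Theorem~\ref{thm001internal}.

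\emph{Item (3).} Conditional on $\mathcal P_1^\Delta$ and $\mathcal P_2^\Delta$, the three bounds follow from earlier propositions:
\begin{itemize}
\item Proposition~\ref{prop001mDeltaDeltageneral} gives $m_{\Delta,\Delta}\leqslant\Delta-1$ for $\Delta\geqslant5$. Its requirement of $\Delta-1$ disjoint $\Delta\Delta$-edges is available as soon as $m_{\Delta,\Delta}\geqslant\Delta$, after choosing a matching in the forest of $\Delta\Delta$-edges. A forest with $\Delta$ edges need not contain $\Delta-1$ disjoint ones, so I will either enlarge the threshold or note that this is exactly the hypothesis assumed in that proposition.
\item Proposition~\ref{lem001m22sharpgeneral} gives $m_{2,2}\leqslant 1$ for $3\leqslant\Delta\leqslant 8$.
\item Proposition~\ref{propm22boundgeneral001} gives $m_{2,2}\leqslant 2$ for every $\Delta\geqslant 3$. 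It needs only $n\geqslant\Delta+4$, which holds for $\Delta\geqslant 4$ and also for $\Delta=3$ since $n\geqslant 7$.
\end{itemize}
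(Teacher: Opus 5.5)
Your overall plan, assembling the theorem from Theorem~\ref{thm001internal}, Conjecture~\ref{conj002mddzero} and Propositions~\ref{propm22boundgeneral001}, \ref{lem001m22sharpgeneral}, \ref{prop001mDeltaDeltageneral}, is exactly the paper's one-line proof. The extra step you add in item~(2), however, would fail.

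You are right that Conjecture~\ref{conj002mddzero}, read literally, gives only $m_{d,d}(T)=0$, while $\mathcal P_1^\Delta$ needs $n_d(T)=0$. But Proposition~\ref{prop001pathmonotonicity} cannot bridge this gap. It comes from a degree-preserving switch, and its content is only that $(d(u)-d(v))(d(x)-d(y))\leqslant 0$ along every path $u\,x\cdots y\,v$. That does not force a degree-$d$ neighbour of $w$.

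Here is a configuration your argument cannot exclude. Take hubs of degree $\Delta$ joined directly or through single degree-$2$ vertices, with all leaves on hubs. Add one vertex $w$ of degree $d$ adjacent to $d$ hubs. For paths with $w$ at an end or in second position, one checks that the product above is never positive: either one factor is $0$, or the vertex being compared is a hub and the product is negative. This configuration also satisfies $\mathcal P_2^\Delta$ and $m_{1,2}=m_{2,2}=m_{d,d}=0$.

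Your fallbacks do not rescue the step:
\begin{itemize}
\item In this example $w$ has no lower-degree neighbour to take a leaf from.
\item Lemma~\ref{lem001oneinterior} concerns $F$ alone, and the paper itself shows (via the $\Theta(n)$ count of degree-$2$ vertices) that convexity of $F$ is not decisive for $\sigma=F-2M_2$.
\item Subcase~2.1 of Theorem~\ref{thm001internal} is built on an internal leaf of degree $<\Delta$, which $\mathcal P_2^\Delta$ has just ruled out.
\end{itemize}
Eliminating a lone intermediate-degree vertex needs a degree-changing transformation, which is precisely what Section~\ref{seclimdir002} says is missing.

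The paper's citation works only under the reading it uses in Section~\ref{seclimdir002} and in Conjecture~\ref{conj001guiding}: the conditional input \emph{is} the absence of degrees in $\{3,\dots,\Delta-1\}$. So in item~(2) you should assume $\mathcal P_1^\Delta$ outright rather than derive it from $m_{d,d}=0$. For $\Delta\in\{4,5\}$, cite the full characterizations of \cite{VukicevicEtAl2024,DimitrovPaper2025}, which give degrees in $\{1,2,\Delta\}$ directly. For $\Delta=3$ the property is vacuous.

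There are also some smaller points.
\begin{itemize}
\item For $\Delta=3$ the range $3\leqslant d\leqslant \Delta-1$ is empty, so there are no thresholds $n_1(3,\cdot)$ to enlarge, and the hypothesis is only $n\geqslant 6$. Your finite check is the correct repair. The three trees of order $6$ with maximum degree $3$ have $\sigma=16,10,8$. The maximizer is the double star, whose two internal leaves have degree $3$ and which has $m_{2,2}=0$.
\item The same check is needed again in item~(3), where you assert $n\geqslant 7$ for $\Delta=3$ without justification.
\item In item~(3), enlarging the threshold cannot supply $\Delta-1$ disjoint $(\Delta,\Delta)$-edges, since they may all share one hub for every $n$. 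Only your second option, carrying the hypothesis of Proposition~\ref{prop001mDeltaDeltageneral} along as the paper tacitly does, is available.
\item That proposition's bound $(\Delta-1)(\Delta-2)(\Delta-5)$ vanishes at $\Delta=5$ when $d(p)=2$. It therefore gives no strict increase there, and the case $\Delta=5$ needs a separate argument or the $\Delta=5$ results of \cite{DimitrovPaper2025}.
\end{itemize}
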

\begin{proof}
Immediate from Theorem~\ref{thm001internal},
Conjecture~\ref{conj002mddzero}, Proposition~\ref{propm22boundgeneral001},
Proposition~\ref{lem001m22sharpgeneral}, and
Proposition~\ref{prop001mDeltaDeltageneral}, as cited.
\end{proof}

%%%%%%%%%%%%%%%%%%%%%%%%%%%%%%%%%%%%%%%%%%%%%%%%%%%%%%%%%%%%%%%%%%%%%%%%%%%%%%%%%%%%%%%%%%%%%%%%%%%%%%%%%%%%%%%%%%%%%%%%%%%%%%%%%%%%%%%%%%%%%%%%%

\section{Closed-Form Extremal \texorpdfstring{$\sigma$}-Irregularity and Threshold Analysis}~\label{sec04closedform}

Throughout this section we work \emph{conditionally} on
Conjecture~\ref{conj002mddzero}, i.e., we assume $T$ is a maximal tree satisfying $\mathcal P_1^\Delta$ where  every vertex of $T$ has degree $1$, $2$, or $\Delta$. As established in subsection~\ref{subchart001}, this is unconditionally justified for $\Delta\in\{3,4,5\}$.

\subsection{A Reduced-Tree Identity}
By Lemma~\ref{lemreducedtreedegrees501} had established a tree on $n_1+n_\Delta$ vertices.

\begin{lemma}~\label{lemreducedtreedegrees501}
Let $T^\bullet$ is a tree on $n_1+n_\Delta$ vertices in which every leaf has degree $1$ and every non leaf vertex has degree exactly $\Delta$.
\end{lemma}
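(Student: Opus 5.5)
The statement is really an assertion about the reduced tree $T^\bullet$ of Definition~\ref{def001reducedtree}. Here $T$ is a maximal tree satisfying $\mathcal P_1^\Delta$, and, for $n\geqslant\Delta+4$, $m_{1,2}(T)=0$ by Corollary~\ref{cor001m12zero}. The claim is that $T^\bullet$ is a tree on exactly $n_1+n_\Delta$ vertices, whose leaves are precisely the leaves of $T$ and whose non-leaf vertices are precisely the $\Delta$-vertices of $T$, each still of degree $\Delta$. I will prove it by analysing the suppression operation directly: it only deletes degree-$2$ vertices and reroutes paths, so I track vertex sets, degrees and acyclicity separately.

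\emph{Step 1: the vertex set and acyclicity.} Under $\mathcal P_1^\Delta$ every vertex of $T$ has degree $1$, $2$ or $\Delta$. The set of degree-$2$ vertices induces a disjoint union of paths, since it is a subforest of $T$ with maximum degree $2$. Each such component $Q$ is the interior of a maximal path $x\,Q\,y$ in $T$ with $d(x),d(y)\neq 2$. Its endpoints $x$ and $y$ are distinct, since otherwise $T$ would contain a cycle. Suppressing $Q$ means deleting its vertices and adding the edge $xy$.

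This replacement contracts a connected subgraph, namely $Q$ together with one of its end edges, so connectivity is preserved. The edge count drops by exactly the number of deleted vertices, so the result is still a tree. Distinct components $Q$ yield distinct pairs $\{x,y\}$, because two different such paths with the same endpoints would form a cycle in $T$. Hence no multi-edges appear. The surviving vertex set is $V(T)$ minus the $n_2$ degree-$2$ vertices, of size $n-n_2=n_1+n_\Delta$ by \eqref{eqsigmamax02def} and $\mathcal P_1^\Delta$.

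\emph{Step 2: the degrees.} Suppression preserves the degree of every surviving vertex. Each edge of $T$ at a surviving vertex $x$ either goes directly to another surviving vertex, or starts a maximal degree-$2$ path ending at a unique surviving $y$. Each such incidence is replaced by exactly one incidence of $x$ in $T^\bullet$, and distinct edges at $x$ give distinct neighbours $y$, again by acyclicity of $T$. So $d_{T^\bullet}(x)=d_T(x)\in\{1,\Delta\}$. Consequently the leaves of $T^\bullet$ are exactly the $n_1$ leaves of $T$, and every non-leaf vertex has degree exactly $\Delta$. This is the stated form. As a check, $n_1+\Delta n_\Delta=2(n_1+n_\Delta-1)$, i.e. $n_1=(\Delta-2)n_\Delta+2$, which is consistent with \eqref{eqsigmamax02def} after eliminating $n_2$.

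\emph{Main obstacle.} The one subtle point is a degenerate configuration: a degree-$2$ path whose endpoint is a leaf, that is, a leg. One might worry that the suppressed edge could then join a leaf to a leaf, or that $T^\bullet$ could collapse to a single edge. The first worry is excluded because a leaf endpoint of a degree-$2$ path would be a $1$--$2$ edge, which is forbidden by $m_{1,2}(T)=0$. So every leaf of $T$ is adjacent in $T$ to a $\Delta$-vertex. The second worry is excluded because $n_\Delta\geqslant1$, and indeed $n_\Delta\geqslant 2$ by Lemma~\ref{lem43twobigvertices} together with $\mathcal P_1^\Delta$, so $T^\bullet$ genuinely has internal vertices. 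I would record this explicitly, since it is exactly where the hypotheses $m_{1,2}(T)=0$ and $n\geqslant\Delta+4$ are used.
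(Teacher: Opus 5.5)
Your proof is correct and follows the same route as the paper. Suppressing degree-$2$ vertices preserves the tree property and the degree of every surviving vertex, so $T^\bullet$ consists of exactly the $n_1$ leaves and $n_\Delta$ hubs of $T$ with their original degrees; your Steps 1--2 supply the acyclicity, no-multi-edge and degree-preservation details that the paper asserts in one line, and the hypothesis $m_{1,2}(T)=0$ (which the paper also invokes) is harmless but not actually needed, since leaves are never suppressed and a leaf--leaf edge in $T^\bullet$ would force $T$ to be a path, impossible when $\Delta\geqslant 3$ is attained.
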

\begin{proof}
Since $T^\bullet$ is a tree on $n_1+n_\Delta$ vertices. Then, suppression of a degree $2$ vertex preserves the tree property and does not change the degree of any other vertex, since it merely merges the two edges at a degree $2$ vertex into one edge between its two neighbors. Since $m_{1,2}(T)=0$, no leaf of $T$ lies on a suppressible path. Then,  all $n_1$ leaves survive unchanged in $T^\bullet$; since $\mathcal P_1^\Delta$ permits only degrees $1,2,\Delta$, every non suppressed, non leaf vertex has degree $\Delta$, and there are $n_\Delta$ of $T^\bullet$.
\end{proof}

\begin{proposition}~\label{prop001reducedtreeidentity}
$n_1 = (\Delta-2)\,n_\Delta + 2$.
\end{proposition}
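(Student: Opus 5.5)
The plan is to derive the identity from the two degree-sum relations in \eqref{eqsigmamax02def}, specialised to trees satisfying $\mathcal P_1^\Delta$. Under $\mathcal P_1^\Delta$ only $n_1$, $n_2$ and $n_\Delta$ can be nonzero, so the vertex count reads $n_1+n_2+n_\Delta=n$ and the handshake identity reads $n_1+2n_2+\Delta n_\Delta=2(n-1)$. Subtracting twice the first equation from the second eliminates $n_2$. This gives $-n_1+(\Delta-2)n_\Delta=-2$, which is exactly $n_1=(\Delta-2)n_\Delta+2$.

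As a cross-check, and to make the link with Lemma~\ref{lemreducedtreedegrees501} explicit, I would also run the same argument on the reduced tree $T^\bullet$. By that lemma, $T^\bullet$ is a tree on $N=n_1+n_\Delta$ vertices whose degrees are only $1$ and $\Delta$. Its handshake identity is $n_1+\Delta n_\Delta=2(N-1)=2(n_1+n_\Delta)-2$, which rearranges to the same formula. The point of this second derivation is that the count of degree-$2$ vertices drops out entirely. This fits the observation that suppressing degree-$2$ vertices does not change the relation between $n_1$ and $n_\Delta$.

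There is no real obstacle here. The only care needed is to state the hypothesis clearly: $T$ satisfies $\mathcal P_1^\Delta$, as assumed throughout the section. The reduced-tree route additionally needs $m_{1,2}(T)=0$, which Corollary~\ref{cor001m12zero} supplies for $n\geqslant\Delta+4$. The direct computation does not use $m_{1,2}(T)=0$ at all, so I would present it as the main proof and give the reduced-tree version only as a remark.
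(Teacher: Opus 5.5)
Your proof is correct. Your secondary ``cross-check'' is exactly the paper's argument. The paper applies the handshake identity to the reduced tree $T^\bullet$, which by Lemma~\ref{lemreducedtreedegrees501} is a tree on $n_1+n_\Delta$ vertices with degrees only $1$ and $\Delta$, and rearranges $n_1+\Delta n_\Delta=2(n_1+n_\Delta-1)$.

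Your main argument works on $T$ itself, eliminating $n_2$ between the two identities in \eqref{eqsigmamax02def}. Both routes rest on the single identity $\sum_{v}(d(v)-2)=-2$. Degree-$2$ vertices contribute $0$ to this sum, which is why suppressing them changes nothing.

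The direct route buys generality. It holds for every tree satisfying $\mathcal P_1^\Delta$, with no appeal to maximality, to $n\geqslant\Delta+4$, or to $m_{1,2}(T)=0$. The paper's route inherits these hypotheses through Lemma~\ref{lemreducedtreedegrees501} and Corollary~\ref{cor001m12zero}, whose proof uses maximality via Lemma~\ref{lemm21zero} and Proposition~\ref{prop001pathmonotonicity}. This matters later: Lemma~\ref{lemcompetitorm22zero0066} applies the identity to non-maximal competitor trees with $n_\Delta=n_\Delta^\star+1$, and your version covers that use directly. The paper's route, in turn, sets up the reduced tree that is reused in Lemma~\ref{lembufferpacking505}.

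One small correction to your closing remark: even the reduced-tree route does not genuinely need $m_{1,2}(T)=0$. Suppressing a degree-$2$ vertex in a tree deletes one vertex and one edge and leaves every other degree unchanged. So all leaves survive whether or not they were adjacent to degree-$2$ vertices.
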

\begin{proof}
By Lemma~\ref{lemreducedtreedegrees501}, $T^\bullet$ is a tree on
$n_1+n_\Delta$ vertices. Hence, it has $n_1+n_\Delta-1$ edges. Then, $n_1\cdot1 + n_\Delta\cdot\Delta = 2(n_1+n_\Delta-1).$ Since $\Delta n_\Delta - 2n_\Delta = n_1-2$, gives
$n_1=(\Delta-2)n_\Delta+2$.
\end{proof}

\begin{corollary}~\label{cor002n2formula}
$n_2=n - 2 - (\Delta-1)n_\Delta$.
\end{corollary}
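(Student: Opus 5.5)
This follows by vertex counting, using only the standing assumption $\mathcal P_1^\Delta$ of this section and Proposition~\ref{prop001reducedtreeidentity}. Under $\mathcal P_1^\Delta$ every vertex of $T$ has degree $1$, $2$ or $\Delta$, so $n_i=0$ for $i\notin\{1,2,\Delta\}$. The first identity in \eqref{eqsigmamax02def} then reduces to
\[
n = n_1 + n_2 + n_\Delta .
\]

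Next we eliminate $n_1$ using Proposition~\ref{prop001reducedtreeidentity}, i.e.\ $n_1=(\Delta-2)n_\Delta+2$. Substituting gives
\[
n = (\Delta-2)n_\Delta + 2 + n_2 + n_\Delta = n_2 + (\Delta-1)n_\Delta + 2,
\]
and solving for $n_2$ yields $n_2 = n-2-(\Delta-1)n_\Delta$, as claimed.

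As a consistency check, the same relation follows directly from the handshake identity $n_1+2n_2+\Delta n_\Delta = 2(n-1)$ in \eqref{eqsigmamax02def}. Subtracting twice the vertex count $n_1+n_2+n_\Delta=n$ gives $-n_1+(\Delta-2)n_\Delta=-2$, which is exactly Proposition~\ref{prop001reducedtreeidentity}. So the corollary does not actually depend on the reduced-tree construction or on the hypothesis $m_{1,2}(T)=0$; it holds for any tree whose degrees lie in $\{1,2,\Delta\}$. It is worth recording this so the formula can be used without invoking Corollary~\ref{cor001m12zero}.

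The only point needing care is that $\mathcal P_1^\Delta$ is the assumption in force here. For general $\Delta$ it is conditional on Conjecture~\ref{conj002mddzero}, so the corollary should be stated under that hypothesis. There is no real obstacle beyond this.
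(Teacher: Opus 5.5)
Your proposal is correct and follows the paper's proof: substitute $n_1=(\Delta-2)n_\Delta+2$ from Proposition~\ref{prop001reducedtreeidentity} into $n=n_1+n_2+n_\Delta$ and solve for $n_2$. Your side remark is also correct: under $\mathcal P_1^\Delta$, the handshake identity alone gives Proposition~\ref{prop001reducedtreeidentity}, so the reduced tree and the hypothesis $m_{1,2}(T)=0$ are not needed for this formula.
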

\begin{proof}
According to Proposition~\ref{prop001reducedtreeidentity},  $n = n_1+n_2+n_\Delta =[(\Delta-2)n_\Delta+2]+n_2+n_\Delta$, giving
$n_2=n-2-(\Delta-2)n_\Delta-n_\Delta=n-2-(\Delta-1)n_\Delta$.
\end{proof}

\begin{proposition}~\label{propsigmaformula502}
If $T\in \TT_{n(i,j),\,\Delta}$ satisfies $\mathcal P_1^\Delta$ and $m_{1,2}(T)=0$, then
\begin{equation}~\label{eqq001sigmaunderP1}
\sigma(T) \;=\; n_1(\Delta-1)^2 \;+\; 2\big(n_2-m_{2,2}(T)\big)(\Delta-2)^2.
\end{equation}
\end{proposition}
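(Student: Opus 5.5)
The plan is to expand $\sigma(T)$ edge by edge according to the degree types of the endpoints. Then I would express the two non-vanishing edge counts through vertex counts by a handshake argument restricted to the vertices of degree $1$ and of degree $2$. No transformation or extremality argument is needed, since the identity is purely combinatorial under the two stated hypotheses.

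\emph{Step 1 (edge types).} Under $\mathcal P_1^\Delta$ every edge has endpoint degrees in $\{1,2,\Delta\}$. The possible classes are therefore $m_{1,1},m_{1,2},m_{1,\Delta},m_{2,2},m_{2,\Delta},m_{\Delta,\Delta}$. Since $\Delta\ge 3$, $T$ has at least $4$ vertices and so is not $K_2$, which gives $m_{1,1}=0$. By hypothesis $m_{1,2}=0$. Edges of type $(2,2)$ and $(\Delta,\Delta)$ contribute $0$ to~\eqref{eq:sigma-def}. Hence
\[
\sigma(T)=m_{1,\Delta}(\Delta-1)^2+m_{2,\Delta}(\Delta-2)^2 .
\]

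\emph{Step 2 (leaf count).} Each leaf is incident to exactly one edge. Because $m_{1,1}=m_{1,2}=0$, that edge goes to a vertex of degree $\Delta$, and distinct leaves give distinct edges. Hence $m_{1,\Delta}=n_1$.

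\emph{Step 3 (degree-$2$ count).} I would count edge-ends at degree-$2$ vertices. Summing degrees over these vertices gives $2n_2 = m_{1,2}+2m_{2,2}+m_{2,\Delta}$, because a $(2,2)$ edge is counted from both ends. With $m_{1,2}=0$ this yields $m_{2,\Delta}=2\bigl(n_2-m_{2,2}(T)\bigr)$. Substituting Steps 2 and 3 into Step 1 gives~\eqref{eqq001sigmaunderP1}.

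The only point needing care is Step 3: a $(2,2)$ edge must be counted twice in the local handshake sum. I expect no genuine obstacle beyond this.
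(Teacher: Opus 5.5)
Your proof is correct and follows essentially the same route as the paper: classify edges by endpoint-degree type, use $m_{1,1}=m_{1,2}=0$ to force all $n_1$ leaf edges to be of type $(1,\Delta)$, and count the $(2,\Delta)$ edges via the degree sum over degree-$2$ vertices, $2n_2=2m_{2,2}+m_{2,\Delta}$. Your version is slightly more careful than the paper's, since you write the $m_{1,2}$ term explicitly in that local handshake before setting it to zero, and you justify $m_{1,1}=0$ from $n\ge\Delta+1\ge 4$.
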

\begin{proof}
Under $\mathcal P_1^\Delta$, every edge of $T$ has both endpoints in $\{1,2,\Delta\}$. In this case, edges of type $(1,1)$ are impossible in a tree with $n\geqslant 3$ and edges of type $(1,2)$ do not occur, since $m_{1,2}(T)=0$. Hence,  every one of the $n_1$
leaf edges is of type $(1,\Delta)$, contributing $(\Delta-1)^2$ each, for a total of $n_1(\Delta-1)^2$ and  any edges of type $(2,2)$ contribute $0$ each. Thus, any edges of type $(\Delta,\Delta)$ contribute $0$ each and any edges of type
$(2,\Delta)$ contribute $(\Delta-2)^2$ each where counting them via the degree sum at degree $2$ vertices, $2n_2 = 2\,m_{2,2}(T) + m_{2,\Delta}(T)$, each
degree $2$ vertex contributes $2$ to the left side, split between $(2,2)$ edges, counted twice, and $(2,\Delta)$ edges, counted once. Thus, 
$m_{2,\Delta}(T)=2\big(n_2-m_{2,2}(T)\big)$ which \eqref{eqq001sigmaunderP1} holds.
\end{proof}

The sharp bound
on $m_{2,2}$ among Proposition~\ref{lem001m22sharpgeneral}, which limits how many degree $2$ vertices can be packed onto a fixed number of hub to hub connections as show that in Figure~\ref{fig000hub}. Thus, among Lemma~\ref{lembufferpacking505} we had established new tree $T^{\bullet\bullet}$ obtained from $T^\bullet$ with certain terms.

\begin{lemma}~\label{lembufferpacking505}
Let $T^{\bullet\bullet}$ be the tree obtained from $T^\bullet$ by deleting its $n_1$ leaves, the induced subtree on the $n_\Delta$ hub vertices alone, it has $n_\Delta-1$ edges. If $m_{2,2}(T)\leqslant 1$, then $n_2 \;\leqslant\; n_\Delta.$
\end{lemma}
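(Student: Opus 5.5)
Under the standing hypotheses of this section ($\mathcal P_1^\Delta$ and $m_{1,2}(T)=0$ via Corollary~\ref{cor001m12zero}), the plan is to count degree $2$ vertices by the hub-to-hub connections that carry them. The first step is to check that $T^{\bullet\bullet}$ is a tree on exactly the $n_\Delta$ hubs. By Lemma~\ref{lemreducedtreedegrees501}, $T^\bullet$ is a tree whose non-leaf vertices are exactly the $n_\Delta$ hubs. Deleting all leaves of a tree with at least two non-leaf vertices leaves a tree, here on $n_\Delta$ vertices and hence with $n_\Delta-1$ edges. If $n_\Delta=1$, then $T$ is a spider. With $m_{1,2}=0$ it must then be a star, so $n_2=0\leqslant n_\Delta$ and this trivial case is settled separately.

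The second step is to attribute every degree $2$ vertex of $T$ to an edge of $T^{\bullet\bullet}$. Since $m_{1,2}(T)=0$, no degree $2$ vertex lies on a path ending at a leaf. So every maximal path of degree $2$ vertices in $T$ has both endpoints of degree $\Delta$, and it is suppressed to a single edge of $T^\bullet$ joining two hubs, that is, an edge of $T^{\bullet\bullet}$. For an edge $e$ of $T^{\bullet\bullet}$, let $\ell(e)\geqslant 0$ be the number of degree $2$ vertices subdividing it. Then
\[
n_2=\sum_{e\in E(T^{\bullet\bullet})}\ell(e),\qquad m_{2,2}(T)=\sum_{e}\max\{\ell(e)-1,0\},
\]
because a chain of $\ell\geqslant1$ degree $2$ vertices between two hubs has exactly $\ell-1$ edges of type $(2,2)$.

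The third step is the count itself. Writing $\ell(e)\leqslant 1+\max\{\ell(e)-1,0\}$ and summing over the $n_\Delta-1$ edges gives
\[
n_2\leqslant (n_\Delta-1)+m_{2,2}(T)\leqslant n_\Delta,
\]
using the hypothesis $m_{2,2}(T)\leqslant 1$. This is exactly the required bound. Equality holds precisely when every hub-hub connection is buffered and exactly one buffer has length $2$.

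The only delicate point is the second step: making sure no degree $2$ vertex escapes the count. Such a vertex could sit on a pendant path, which is excluded by $m_{1,2}=0$, or the reduced tree could fail to have every internal vertex of degree $\Delta$, which is excluded by $\mathcal P_1^\Delta$. I will state explicitly that both of these facts from Corollary~\ref{cor001m12zero} and Lemma~\ref{lemreducedtreedegrees501} are being invoked. Everything else is routine counting.
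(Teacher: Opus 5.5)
Your proposal is correct and follows essentially the same route as the paper: both decompose $T$ along the $n_\Delta-1$ edges of $T^{\bullet\bullet}$ and write $n_2=\sum_e t(e)$ and $m_{2,2}(T)=\sum_e\max\{t(e)-1,0\}$. Your termwise inequality $\ell(e)\leqslant 1+\max\{\ell(e)-1,0\}$, which gives $n_2\leqslant n_\Delta-1+m_{2,2}(T)$, is a compact form of the paper's Claim~2 case analysis (at most one edge with $t(e)=2$, all others with $t(e)\leqslant 1$).
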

\begin{proof}
Since $T^{\bullet\bullet}$ is the tree induced on the $n_\Delta$ hub vertices, it has exactly $n_\Delta-1$ edges, and each such edge
$e\in E(T^{\bullet\bullet})$ corresponds bijectively to a maximal path $\mathcal P_e$ in $T$ joining two hubs through a sequence of
degree $2$ vertices. Let $t(e)\geqslant 0$ for the number of interior degree $2$ vertices of $\mathcal P_e$. Then, 
\begin{equation}~\label{eqq002n2sum}
n_2 \;=\; \sum_{e\in E(T^{\bullet\bullet})} t(e),
\end{equation}
since every degree $2$ vertex of $T$ lies in the interior of exactly one such path.

\smallskip
\noindent\textbf{Claim 1.} \emph{For each $e\in E(T^{\bullet\bullet})$,
the path $\mathcal P_e$ contributes exactly $\max(t(e)-1,\,0)$ edges to $m_{2,2}(T)$, and every $(2,2)$ edge of $T$ arises this way from a unique $e$.}

If $t(e)=0$, $\mathcal P_e$ is a single hub hub edge of $T$, contributing no $(2,2)$ edge. If $t(e)\geqslant 1$, assume that the interior vertices of $\mathcal P_e$ in order
as $w_1,\dots,w_{t(e)}$; the edges of $\mathcal P_e$ are then
$hw_1,\,w_1w_2,\,\dots,\,w_{t(e)-1}w_{t(e)},\,w_{t(e)}h'$, where $h,h'$ are
the two hubs. The edges $hw_1$ and $w_{t(e)}h'$ have one endpoint of degree
$\Delta$ and are therefore not of type $(2,2)$ and each of the remaining $t(e)-1$ edges $w_iw_{i+1}$ joins two degree $2$ vertices and is of type
$(2,2)$. Hence $\mathcal P_e$ contributes exactly $t(e)-1$ edges to $m_{2,2}(T)$ when
$t(e)\geqslant 1$, and $0=\max(0-1,0)$ when $t(e)=0$. Thus, in both cases this equals
$\max(t(e)-1,0)$. Since every degree $2$ vertex of $T$ lies on a unique path $\mathcal P_e$ where the paths $\{\mathcal P_e\}_{e\in E(T^{\bullet\bullet})}$ partition
$V(T)\setminus\{\text{hubs},\text{leaves}\}$ together with the hub
endpoints, and  every $(2,2)$ edge of $T$ lies in the interior of a unique such
path, proving the second assertion. Thus, over all $e$,
\begin{equation}~\label{eqq003m22sum}
m_{2,2}(T) \;=\; \sum_{e\in E(T^{\bullet\bullet})} \max\big(t(e)-1,\,0\big).
\end{equation}

\smallskip
\noindent\textbf{Claim 2.} \emph{If $m_{2,2}(T)\leqslant 1$, then at most one edge of $T^{\bullet\bullet}$ satisfies $t(e)\geqslant 2$, and for that edge $t(e)\leqslant 2$.}

Since $m_{2,2}(T)>0$, according to~\eqref{eqq003m22sum} and a summand
is positive precisely when $t(e)\geqslant 2$, in which case it equals $t(e)-1\geqslant 1$. If two distinct edges $e_1,e_2\in E(T^{\bullet\bullet})$ both satisfied
$t(e_i)\geqslant 2$, their summands would each contribute at least $1$ to
\eqref{eqq003m22sum}, giving $m_{2,2}(T)\geqslant 2$, contrary to hypothesis. Hence at most one edge has $t(e)\geqslant 2$. If such an edge $e^\ast$ exists, its summand
$t(e^\ast)-1$ is itself bounded above by the total $m_{2,2}(T)\le1$. Thus, $t(e^\ast)\leqslant 2$.

\smallskip
  By Claim~2, every edge of
$T^{\bullet\bullet}$ other than the possible exceptional edge $e^\ast$ satisfies $t(e)\leqslant 1$, while $t(e^\ast)\leqslant 2$ when $e^\ast$ exists. Thus, by applying
these bounds term by term in \eqref{eqq003m22sum}, and using
$|E(T^{\bullet\bullet})|=n_\Delta-1$,
\begin{equation}~\label{taptabeqq01}
n_2 \;=\; \sum_{e\in E(T^{\bullet\bullet})} t(e)
\;\leqslant\; n_\Delta,
\end{equation}
where the right hand of inequality~\eqref{taptabeqq01} holds whether or not $e^\ast$ exists. 
\end{proof}

\begin{figure}[H]
    \centering
    \includegraphics[width=0.7\linewidth]{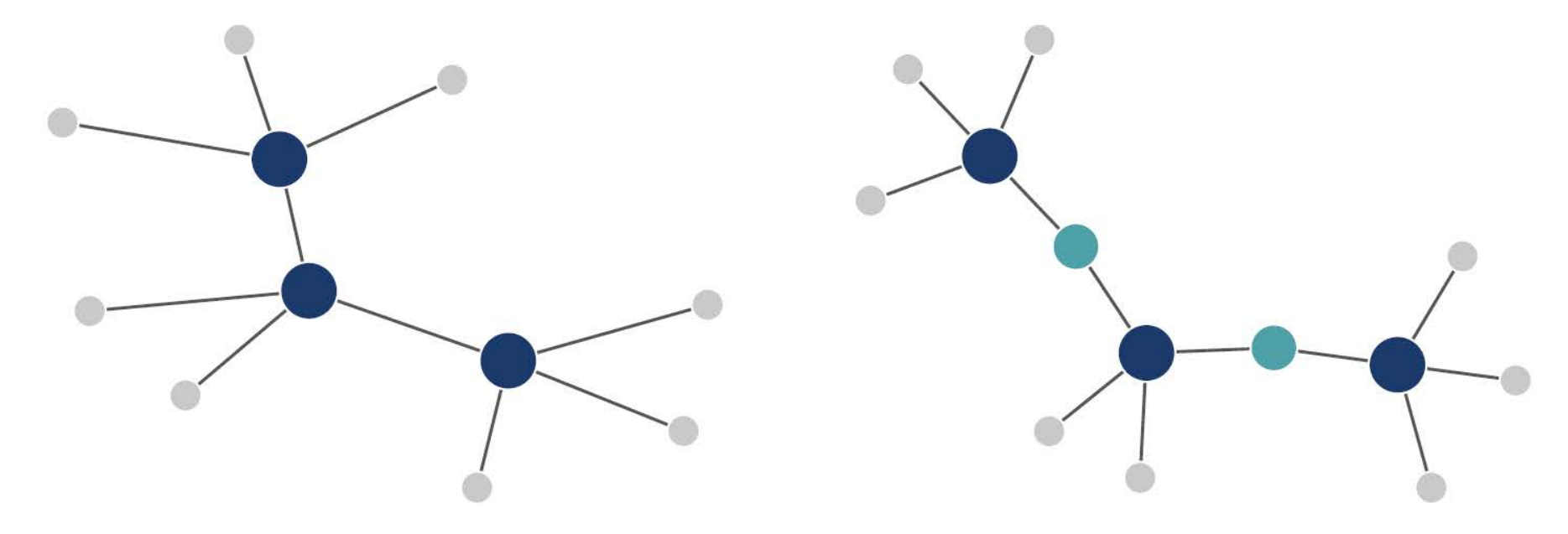}
    \caption{The left side is $\mathcal C_1(h{=}3,\Delta{=}4)$, direct hub hub joins, $n=11$ and the right side is $\mathcal C_2(h{=}3,\Delta{=}4)$, degree 2 buffers, $n=13$.}
    \label{fig000hub}
\end{figure}
\begin{theorem}~\label{thm04optimalnDelta}
Let $T\in \TT_{n(i,j),\,\Delta}$ be a maximal tree on $n$ vertices with maximum degree $\Delta$ satisfying $\mathcal P_1^\Delta$ and $m_{2,2}(T)\leqslant 1$,
and  $3\leqslant \Delta\leqslant 8$). Then
\begin{equation}~\label{eqq001nDeltastar}
n_\Delta(T) \;=\; n_\Delta^\star \;:=\; \left\lceil \frac{n-2}{\Delta} \right\rceil.
\end{equation}
\end{theorem}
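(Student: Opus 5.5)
The plan is to turn the maximization over maximal trees into a one-variable optimization in $h:=n_\Delta(T)$, using the exact formula available under $\mathcal P_1^\Delta$. Since $n\ge\Delta+4$ is implicit in the range where the structural results apply, Corollary~\ref{cor001m12zero} gives $m_{1,2}(T)=0$. Hence Proposition~\ref{propsigmaformula502}, Proposition~\ref{prop001reducedtreeidentity} and Corollary~\ref{cor002n2formula} apply. Substituting $n_1=(\Delta-2)h+2$ and $n_2=n-2-(\Delta-1)h$ into \eqref{eqq001sigmaunderP1} gives
\[
\sigma(T)=(\Delta-1)^2\big[(\Delta-2)h+2\big]+2(\Delta-2)^2\big[n-2-(\Delta-1)h-m_{2,2}(T)\big].
\]
The coefficient of $h$ is $(\Delta-2)(\Delta-1)\big[(\Delta-1)-2(\Delta-2)\big]=-(\Delta-2)(\Delta-1)(\Delta-3)$. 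So for $\Delta\ge4$, $\sigma$ strictly decreases in $h$ at fixed $m_{2,2}$, and a maximal tree should use as few hubs as feasibility permits.

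\emph{Lower bound on $h$.} Lemma~\ref{lembufferpacking505} applies because $m_{2,2}(T)\le1$, and gives $n_2\le h$. Combined with $n_2=n-2-(\Delta-1)h$, this yields $n-2\le\Delta h$, hence $h\ge\lceil (n-2)/\Delta\rceil=n_\Delta^\star$.

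\emph{Upper bound on $h$ via a competitor.} I would exhibit, for $h^\star=n_\Delta^\star$, an explicit tree $T^\star$ satisfying $\mathcal P_1^\Delta$ with $n_\Delta=h^\star$. Take $h^\star$ hubs along a path (as in $\mathcal C_2$). Place one degree-$2$ buffer on $r$ of the $h^\star-1$ hub--hub links, where $r:=n-2-(\Delta-1)h^\star\le h^\star$. If $r=h^\star$, put two buffers on a single link, so that $m_{2,2}=1$. Fill all remaining hub slots with pendant leaves. The vertex count closes by Corollary~\ref{cor002n2formula}.

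Then compare $\sigma(T)$ with $\sigma(T^\star)$ using the displayed formula. If $h\ge h^\star+1$, the loss from the $h$-term is at least $(\Delta-2)(\Delta-1)(\Delta-3)$. The possible gain from $m_{2,2}$ dropping from $1$ to $0$ is at most $2(\Delta-2)^2$. Maximality of $T$ then forces $h=h^\star$, provided the loss exceeds the gain.

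\emph{Main obstacle.} That trade-off is not uniform in $\Delta$. The inequality $(\Delta-1)(\Delta-3)>2(\Delta-2)$ holds only for $\Delta\ge5$. For $\Delta=3$ the $h$-coefficient vanishes, and for $\Delta=4$ the inequality is $3>4$, which is false. So the step I expect to be hardest is the boundary case $n-2\equiv0\pmod\Delta$, where $T^\star$ is forced to have $m_{2,2}=1$.

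I would first try to avoid that case by refining the competitor. At $h=h^\star$ with $n_2=h^\star$, I would look for an alternative realization with a $(\Delta,\Delta)$-free arrangement that still has $m_{2,2}=0$, for instance by using the leg structure of Definition~\ref{deffleg001}. Failing that, I would compare $h^\star$ and $h^\star+1$ directly and restrict the claim to the $\Delta$ range where the difference has the right sign.

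For $\Delta\in\{3,4\}$ I would fall back on the known characterizations in \cite{VukicevicEtAl2024,DimitrovPaper2025}. I would flag explicitly that for $\Delta=3$, $h$ is not pinned down by $\sigma$ alone, so the statement must be read as ``some maximal tree has $n_\Delta=n_\Delta^\star$'' unless extra structure breaks the tie.
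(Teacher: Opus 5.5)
Your argument for $\Delta\ge5$ is correct, and it takes a different route from the paper's, one that the paper actually needs. The paper's proof stops after the packing bound $n_\Delta\ge(n-2)/\Delta$. It then asserts, citing Proposition~\ref{prop001pathmonotonicity}, that $\sigma$ decreases in $n_\Delta$. That monotonicity holds only at fixed $m_{2,2}$, as the paper itself concedes after Theorem~\ref{thm07sigmaminclosedform}. The paper also never exhibits a tree realizing $n_\Delta^\star$. Nor does it weigh the loss $(\Delta-1)(\Delta-2)(\Delta-3)$ per extra hub against the possible gain $2(\Delta-2)^2$ when $m_{2,2}$ drops to $0$.

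Your explicit competitor $T^\star$, together with that comparison, closes exactly this hole. Your argument never uses $\Delta\le8$: that restriction enters the paper only through Proposition~\ref{lem001m22sharpgeneral}, whereas here $m_{2,2}(T)\le1$ is a hypothesis. So it proves the claim for every $\Delta\ge5$. One piece of bookkeeping to add: if $r<0$, then $n_2<0$ for every $h\ge n_\Delta^\star$, so no admissible $T$ exists and the claim is vacuous.

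For $\Delta\in\{3,4\}$, the obstacle you found is a defect of the statement, not of your method, so none of your fallbacks can work.

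\textbf{The refined competitor is impossible.} Under $\mathcal P_1^\Delta$ and $m_{1,2}=0$, the quantity $n_2-m_{2,2}$ equals the number of edges of $T^{\bullet\bullet}$ carrying at least one buffer. Hence it is at most $h-1$, so $n_2=h^\star$ forces $m_{2,2}\ge1$.

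\textbf{Counterexample for $\Delta=4$.} Take $n=14$. Every tree satisfying $\mathcal P_1^4$ has $\sigma=9n_1+8n_2-12m_{1,2}-8m_{2,2}$. With $n_4=3$ we get $n_1=8$ and $n_2=3$. Either $m_{1,2}\ge1$, or $m_{1,2}=0$ and then $m_{2,2}\ge1$; in both cases $\sigma\le88$. Four mutually adjacent hubs with $10$ pendant leaves give $\sigma=90$, and a short check over $(n_3,n_4)$ shows $90$ is the global maximum. So the maximal trees satisfy $\mathcal P_1^4$ and $m_{2,2}=0$, yet have $n_4=4\ne3=n_\Delta^\star$. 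Among trees satisfying $\mathcal P_1^4$, the same net $-6+8=+2$ in favour of $n_\Delta^\star+1$ recurs for every $n\equiv2\pmod4$.

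\textbf{Counterexample for $\Delta=3$.} Every tree with maximum degree $3$ satisfies
$\sigma(T)=4n_1+2n_2-4m_{1,2}-2m_{2,2}=2n+4-4m_{1,2}(T)-2m_{2,2}(T)$.
So the maximal trees are exactly those with $m_{1,2}=m_{2,2}=0$. These are realizable for every $n_3$ with $\lceil(n-1)/3\rceil\le n_3\le\lfloor(n-2)/2\rfloor$. At $n=20$ this gives $n_3\in\{7,8,9\}$, while $n_\Delta^\star=6$. Hence even your weakened reading (``some maximal tree has $n_\Delta=n_\Delta^\star$'') fails whenever $n\equiv2\pmod3$.

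Appealing to the known $\Delta=4$ characterization would therefore refute the claim rather than confirm it. Your proof establishes the theorem for $\Delta\ge5$. The statement is false for $\Delta\in\{3,4\}$, and so is the paper's proof there, which treats $\Delta=3$ as ``handled consistently''.
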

\begin{proof}
By Corollary~\ref{cor002n2formula} and Lemma~\ref{lembufferpacking505},
\begin{equation}~\label{eqq002nDeltastar}
n-2-(\Delta-1)n_\Delta \;\leqslant\; n_\Delta
\quad \text{ where} \quad
n_\Delta \;\geqslant\; \frac{n-2}{\Delta}.
\end{equation}
By Proposition~\ref{prop001pathmonotonicity}, $\sigma$ strictly decreases in $n_\Delta$ for $\Delta\geqslant 4$ and is constant in $n_\Delta$ at $\Delta=3$, a case handled separately and consistently by the same formula since the
constraint \eqref{eqq002nDeltastar} still applies. Thus, among integers satisfying \eqref{eqq002nDeltastar}, $\sigma$ is maximized at the smallest such integer, namely $n_\Delta^\star=\lceil (n-2)/\Delta\rceil$.
\end{proof}

\subsection{The General Closed-Form Formula}~\label{ssec005mainformula}
Throughout Theorem~\ref{thm05generalclosedform}, a tree $T$ on $n\geqslant  n_0(\Delta)$ vertices with maximum degree $\Delta$ satisfies $\sigma(T)=\sigma_{\max}(n,\Delta)$ if
and only if $T\in \TT_{n(i,j), \Delta}$ satisfies  $\mathcal P_1^\Delta,\dots,\mathcal P_k^\Delta$ as depict by Problem~\ref{prob02structural}.

\begin{theorem}~\label{thm05generalclosedform}
Let $\Delta$ satisfy $3\leqslant\Delta\leqslant8$, let $T\in \TT_{n(i,j),\Delta}$ be a maximal tree
on $n\geqslant n_0(\Delta)$ vertices with maximum degree $\Delta$, satisfying
$\mathcal P_1^\Delta$. Consider $n_\Delta^\star=\lceil(n-2)/\Delta\rceil$, and define $n_1^\star= (\Delta-2)\,n_\Delta^\star + 2$, $n_2^\star =n - 2 - (\Delta-1)\,n_\Delta^\star$, and $m_{2,2}^\star  = \max\!\big(0,\; n-1-\Delta\, n_\Delta^\star\big)$. Then
\begin{equation}~\label{eqq001sigmamaxgeneral}
\sigma_{\max}(n,\Delta)\!=\!n_1^\star(\Delta-1)^2 \;+\; 2\big(n_2^\star-m_{2,2}^\star\big)(\Delta-2)^2.
\end{equation}
\end{theorem}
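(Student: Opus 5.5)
The plan is to evaluate the identity of Proposition~\ref{propsigmaformula502} at the extremal parameters. Every quantity in that identity is determined once $n_\Delta(T)$ and $m_{2,2}(T)$ are pinned down.

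First I would put $T$ into the setting of that identity. Take $n_0(\Delta)\geqslant \max(2\Delta,\Delta+4)$. Then Corollary~\ref{cor001m12zero} gives $m_{1,2}(T)=0$, and together with $\mathcal P_1^\Delta$ Proposition~\ref{propsigmaformula502} applies:
\[
\sigma(T)=n_1(\Delta-1)^2+2\big(n_2-m_{2,2}(T)\big)(\Delta-2)^2 .
\]
Since $3\leqslant\Delta\leqslant 8$, Proposition~\ref{lem001m22sharpgeneral} gives $m_{2,2}(T)\leqslant 1$. Hence Theorem~\ref{thm04optimalnDelta} applies and yields $n_\Delta(T)=n_\Delta^\star=\lceil (n-2)/\Delta\rceil$. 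Proposition~\ref{prop001reducedtreeidentity} and Corollary~\ref{cor002n2formula} then give $n_1=n_1^\star$ and $n_2=n_2^\star$. The only remaining unknown is $m_{2,2}(T)$.

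To determine $m_{2,2}(T)$ I would use the path decomposition from the proof of Lemma~\ref{lembufferpacking505}. The $n_\Delta^\star-1$ edges $e$ of $T^{\bullet\bullet}$ carry $t(e)\geqslant 0$ buffer vertices, with $\sum_e t(e)=n_2^\star$ by \eqref{eqq002n2sum} and $m_{2,2}(T)=\sum_e\max(t(e)-1,0)$ by \eqref{eqq003m22sum}. Since $\max(t-1,0)\geqslant t-1$, this gives
\[
m_{2,2}(T)\geqslant n_2^\star-(n_\Delta^\star-1)=n-1-\Delta n_\Delta^\star .
\]
Together with $m_{2,2}\geqslant 0$, this is $m_{2,2}(T)\geqslant m_{2,2}^\star$. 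Because $\sigma$ is strictly decreasing in $m_{2,2}$ once $n_1,n_2$ are fixed (as $\Delta\geqslant 3$), this already gives the upper bound $\sigma(T)\leqslant$ the right-hand side of \eqref{eqq001sigmamaxgeneral}.

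For the matching lower bound I would exhibit a tree attaining it. Take any tree $T^{\bullet\bullet}$ on $n_\Delta^\star$ hubs, for example a path. Give $\min(n_2^\star,n_\Delta^\star-1)$ of its edges one buffer vertex each. Put any surplus, which is $m_{2,2}^\star$ vertices, on a single edge. Fill the remaining hub slots with pendant leaves. Then $m_{1,2}=0$ and the tree realizes exactly $n_1^\star,n_2^\star,n_\Delta^\star$ and $m_{2,2}=m_{2,2}^\star$. Its $\sigma$-value equals the right-hand side of \eqref{eqq001sigmamaxgeneral}, so this value is attained and equals $\sigma_{\max}(n,\Delta)$. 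As a consistency check, $\Delta n_\Delta^\star\geqslant n-2$ gives $m_{2,2}^\star\leqslant 1$, so the construction is compatible with Proposition~\ref{lem001m22sharpgeneral}. Also $n_2^\star\geqslant 0$ requires $(\Delta-1)\lceil (n-2)/\Delta\rceil\leqslant n-2$, which holds for $n$ large; this is a further constraint on $n_0(\Delta)$.

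The main obstacle is the step that relies on Theorem~\ref{thm04optimalnDelta}, namely that the maximizer uses the minimal admissible $n_\Delta$. The paper justifies it only by a monotonicity remark, and at $\Delta=3$ it is a tie rather than a strict decrease. I would therefore verify it directly. Substituting $n_1,n_2$ as functions of $n_\Delta$ and $m_{2,2}=\max(0,n-1-\Delta n_\Delta)$ into the formula gives a function of $n_\Delta$ alone. Above the threshold $n_\Delta\geqslant (n-1)/\Delta$ it is affine with slope $(\Delta-2)(\Delta-1)^2-2(\Delta-1)(\Delta-2)^2=(\Delta-1)(\Delta-2)(3-\Delta)\leqslant 0$. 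Checking this slope, and the boundary value at $n_\Delta^\star$, should confirm that the value is maximized at $n_\Delta^\star$ for every $3\leqslant\Delta\leqslant 8$, with ties only when $\Delta=3$. Ties do not affect the value of $\sigma_{\max}$.
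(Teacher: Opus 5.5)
\textbf{The gap.} The step that fails is the one you flagged yourself. Like the paper's proof, you cite Theorem~\ref{thm04optimalnDelta} to fix $n_\Delta(T)=n_\Delta^\star$, and Proposition~\ref{lem001m22sharpgeneral} to get $m_{2,2}\leqslant 1$. The direct check you propose does not confirm this. Carried out, it shows the statement is false for certain residues of $n$.

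Your lower bound $m_{2,2}\geqslant\max(0,n-1-\Delta h)$, together with the same construction at a general $h$, shows that the largest $\sigma$ among trees with $\mathcal P_1^\Delta$, $m_{1,2}=0$ and $n_\Delta=h$ is exactly
\[
f(h)=\big[(\Delta-2)h+2\big](\Delta-1)^2+2(\Delta-2)^2\min\big(n-2-(\Delta-1)h,\;h-1\big).
\]
Write $n-1=\Delta h_0+r$ with $0\leqslant r\leqslant\Delta-1$.
\begin{itemize}
\item For $h\leqslant h_0$ the minimum equals $h-1$, and $f$ is strictly increasing.
\item For $h\geqslant h_0+1$ the slope of $f$ is $(\Delta-1)(\Delta-2)(3-\Delta)\leqslant 0$.
\end{itemize}
So the maximum is at $h_0$ or $h_0+1$. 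For $1\leqslant r\leqslant\Delta-1$ and $n$ large,
\[
f(h_0+1)-f(h_0)=(\Delta-2)\Big[(\Delta-1)^2-2(\Delta-2)(\Delta-1-r)\Big].
\]
Now $n_\Delta^\star=\lceil(n-2)/\Delta\rceil$ equals $h_0$ for $r\in\{0,1\}$ and $h_0+1$ for $r\geqslant 2$. When $r=1$, $n_\Delta^\star$ lies below the threshold $(n-1)/\Delta$, so your slope argument does not reach it. The sign above contradicts the choice $n_\Delta^\star$ exactly for $(\Delta,r)\in\{(3,1),(4,1),(7,2),(8,2)\}$.

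\textbf{Counterexamples.}
\begin{itemize}
\item $\Delta=3$, $n=3h+2$. The stated formula gives $6h+6$, since $n_\Delta^\star=h$ and $m_{2,2}^\star=1$. Take $h+1$ vertices of degree $3$ forming a tree, subdivide $h-2$ of the $h$ hub--hub edges once, and attach $h+3$ pendant leaves. This gives $\sigma=6h+8$. Every tree of maximum degree $3$ satisfies $\mathcal P_1^3$, so the hypothesis is met, and no choice of $n_0(3)$ rescues the statement.
\item $\Delta=7$, $n=73$. The formula gives $2302$. Ten hubs carrying $11$ degree-$2$ vertices on their $9$ connections (so $m_{2,2}=2$), plus $52$ leaves, give $2322$. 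Hence any maximal tree with $\mathcal P_1^7$ there has $m_{2,2}=2$, contradicting Proposition~\ref{lem001m22sharpgeneral}. That proposition's computation lets $z_1$ reach degree $\Delta$ after gaining four edges, which happens only for $\Delta=5$.
\end{itemize}

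\textbf{Where the paper goes wrong, and what you actually prove.} Theorem~\ref{thm04optimalnDelta} fails because its monotonicity argument holds $m_{2,2}$ fixed, while passing from $h_0+1$ to $h_0$ changes $m_{2,2}$. The paper's proof rests on the same two citations and inherits the same error. With the citations dropped, your argument proves $\sigma_{\max}(n,\Delta)=\max\{f(h_0),f(h_0+1)\}$ whenever a maximal tree satisfies $\mathcal P_1^\Delta$ and $m_{1,2}=0$. This agrees with the stated closed form exactly when $(\Delta,r)$ avoids the four pairs listed above.
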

\begin{proof}
According to Theorem~\ref{thm04optimalnDelta}, the $\sigma$-maximizing value of $n_\Delta$ is $n_\Delta^\star$. Then, 
follow from Proposition~\ref{prop001reducedtreeidentity} and
Corollary~\ref{cor002n2formula}. It remains to determine $m_{2,2}^\star$ at $n_\Delta=n_\Delta^\star$. Assume that  $k^\star=n_\Delta^\star-1$ for the number
of hub-backbone edges of $T^{\bullet\bullet}$. Since
$m_{2,2}^\star=\max(0,\,n_2^\star-k^\star)$ implies that 
$n_2^\star=n-2-(\Delta-1)n_\Delta^\star$ and $k^\star=n_\Delta^\star-1$ gives
\begin{equation}~\label{eqq002sigmamaxgeneral}
n_2^\star-k^\star=n-1-\Delta n_\Delta^\star
\end{equation}

 Since $n_\Delta^\star$ is by definition the smallest integer with
$\Delta n_\Delta^\star \geqslant n-2$, we have $n-1-\Delta n_\Delta^\star \leqslant 1$,
and  according to~\eqref{eqq002sigmamaxgeneral} since $n_\Delta^\star-1$ fails the inequality
\eqref{eqq002nDeltastar}, $n-1-\Delta n_\Delta^\star > 1-\Delta$. Hence
$m_{2,2}^\star\in\{0,1\}$ throughout, consistent with the hypothesis
$m_{2,2}(T)\leqslant1$. Thus, by using $n_1^\star,n_2^\star,m_{2,2}^\star$ into
\eqref{eqq001sigmaunderP1} gives \eqref{eqq001sigmamaxgeneral}.
\end{proof}

Theorem~\ref{thm05generalclosedform} requires $n\geqslant n_0(\Delta)$, where
$n_0(\Delta)$ must simultaneously satisfy (see Figure~\ref{fig001max}), 
\begin{enumerate}
    \item $n\geqslant 2\Delta$ or $n\geqslant 7$ for
$\Delta=3$, for Theorem~\ref{thm001internal}; 
\item $n\geqslant n_1(\Delta,d)$ for every $3\leqslant d\leqslant\Delta-1$, as required by
Conjecture~\ref{conj002mddzero}; and 
\item  $n_\Delta^\star\geqslant 2$, where $n\ge\Delta+2$, and the hub backbone $T^{\bullet\bullet}$ is non trivial.
\end{enumerate}

\begin{figure}[H]
    \centering
    \includegraphics[width=0.7\linewidth]{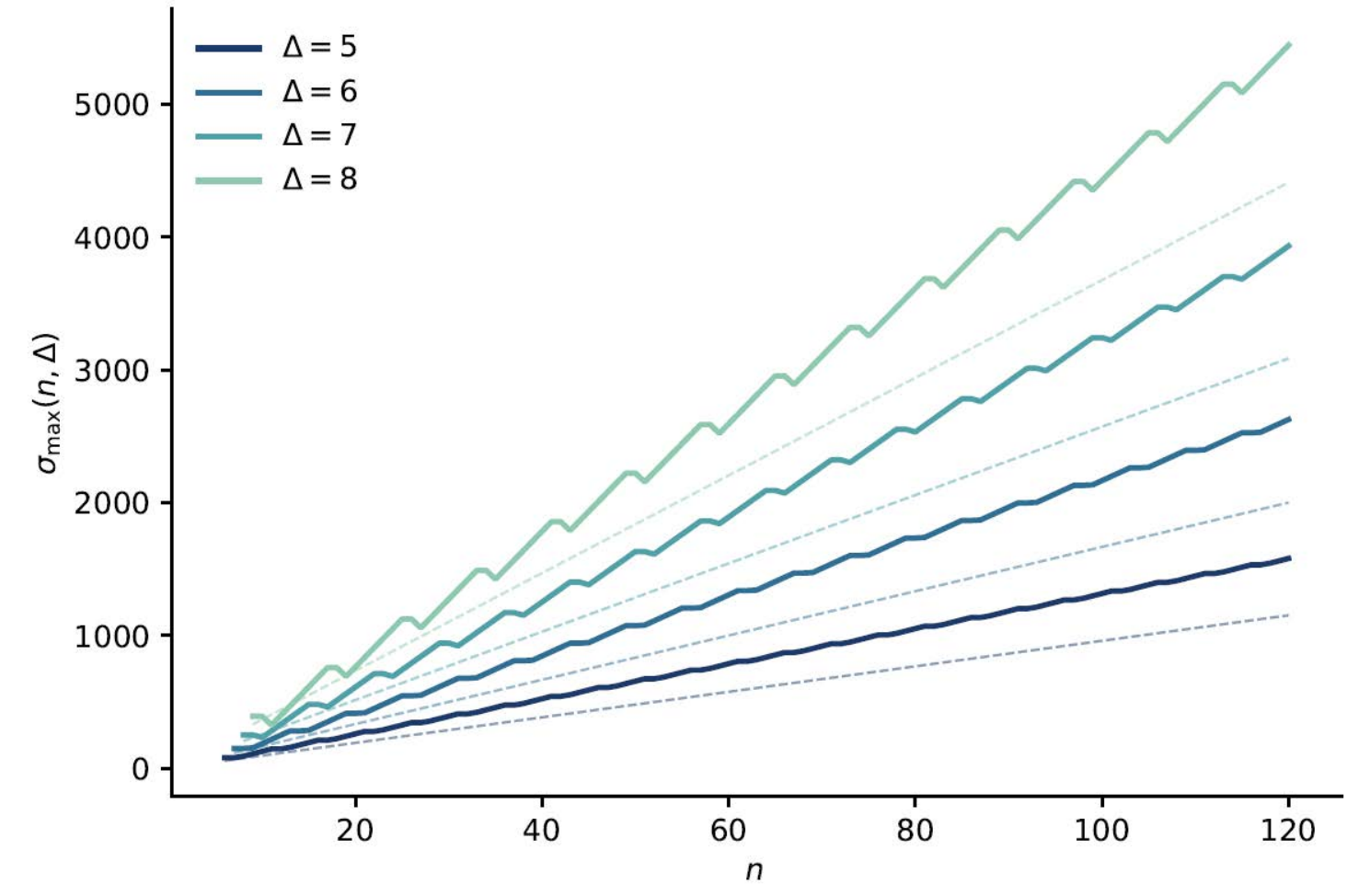}
    \caption{Closed-form $\sigma_{\max}(n,\Delta)$ by Theorem~\ref{thm05generalclosedform} against its
leading-order linear asymptote and emphasizes the bounded $O(1)$ correction term.}~\label{fig001max}
\end{figure}

\section{On Minimum \texorpdfstring{$\sigma$}-Irregularity Trees under a Prescribed Maximum Degree}~\label{sec06minimum}
Recall Definition~\ref{deffleg001} for established total contribution to $\sigma(T)$ of the edges of $L$ by Lemma~\ref{lem001perlegcontribution}.

\begin{lemma}~\label{lem001perlegcontribution}
Let $h$ have degree $\Delta$, and let $L$ be a leg at $h$ all of whose internal vertices and the hub adjacent vertex have degree exactly $2$, terminating in a leaf. The total contribution to $\sigma(T)$ of the edges of $L$ is
\begin{equation}
c(L) \;=\;
\begin{cases}
(\Delta-1)^2, & \text{if } L \text{ has length } 1,\\
(\Delta-2)^2+1, & \text{if } L \text{ has length} \ge2.
\end{cases}
\label{eq:leg-contribution}
\end{equation}
\end{lemma}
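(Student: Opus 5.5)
The plan is to compute $c(L)$ directly from the definition of $\sigma$ as an edge sum. By Definition~\ref{deffleg001} a leg is a path $h=w_0,w_1,\dots,w_\ell$, where $w_\ell$ is a leaf. By hypothesis $d(w_i)=2$ for $1\leqslant i\leqslant \ell-1$, and $d(h)=\Delta$. The edges of $L$ are exactly $w_{i-1}w_i$ for $i=1,\dots,\ell$. Since $\sigma$ is additive over edges, $c(L)$ is the sum of $(d(w_{i-1})-d(w_i))^2$ over these $\ell$ edges, and it depends only on the degrees along the leg. We split into the two cases of the statement according to $\ell$.

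\emph{Case $\ell=1$.} The single edge is $hw_1$ with $d(w_1)=1$, so $c(L)=(\Delta-1)^2$.

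\emph{Case $\ell\geqslant 2$.} We classify the edges by the degrees of their endpoints.
\begin{itemize}
\item The hub edge $hw_1$ has endpoint degrees $\Delta$ and $2$, and contributes $(\Delta-2)^2$.
\item Each interior edge $w_{i-1}w_i$ with $2\leqslant i\leqslant \ell-1$ joins two degree-$2$ vertices, and contributes $0$. There are $\ell-2$ such edges.
\item The terminal edge $w_{\ell-1}w_\ell$ has endpoint degrees $2$ and $1$, and contributes $1$.
\end{itemize}
Summing gives $(\Delta-2)^2+1$, independent of $\ell$. This independence explains why the formula does not depend on the length once $\ell\geqslant2$.

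The only point needing care is the boundary case $\ell=2$. There the middle group of edges is empty, and $w_1=w_{\ell-1}$ is simultaneously the hub-adjacent vertex and the leaf's neighbour. We must check that the edges $hw_1$ and $w_1w_2$ are distinct and both counted: they are, since $\ell=2$ gives two edges. We also verify that the hypothesis "hub-adjacent vertex has degree exactly $2$" is exactly what rules out $\ell=1$ in the second case; otherwise $w_1$ would be the leaf.

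No earlier result is really needed beyond the definition of $\sigma$ in \eqref{eq:sigma-def}. It may be worth remarking that the second case produces a $(1,2)$ edge, so by Corollary~\ref{cor001m12zero} such legs cannot occur in a maximal tree with $n\geqslant\Delta+4$. This is consistent with their use in the minimization section, and is not needed for the computation itself. I expect no genuine obstacle.
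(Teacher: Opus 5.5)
Your proposal is correct and matches the paper's proof essentially step for step: a direct edge-by-edge computation split into length $1$ and length $\ge 2$, with the hub edge contributing $(\Delta-2)^2$, the $\ell-2$ interior $(2,2)$ edges contributing $0$, and the terminal $(2,1)$ edge contributing $1$. Your side remark connecting such legs to Corollary~\ref{cor001m12zero} is accurate but, as you say, not needed for the computation.
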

\begin{proof}
If $L$ has length $1$, it is a single edge from $h$ of degree $\Delta$ to a leaf, contributing $(\Delta-1)^2$. If $L$ has length
$k\geqslant 2$, its edges are: one edge from $h$ has degree $\Delta$ to a degree $2$ vertex, contributing $(\Delta-2)^2$. Then, $k-2$ edges between consecutive
degree $2$ vertices, contributing $0$ each and one edge from a degree $2$ vertex to the terminal leaf, contributing $1$. Thus, 
$(\Delta-2)^2+1$, independent of $k$.
\end{proof}

\begin{corollary}~\label{cor04legslength2}
Since $(\Delta-2)^2+1 < (\Delta-1)^2$ for every $\Delta\ge3$, every leg of a $\Delta$-minimal tree of the
form described in Lemma~\ref{lem001perlegcontribution} has length $\geqslant 2$, provided sufficiently many spare degree $2$ vertices are available elsewhere in the tree to lengthen it without changing $n$.
\end{corollary}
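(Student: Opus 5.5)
The statement is Corollary~\ref{cor04legslength2}: in a $\Delta$-minimal tree every leg of the form in Lemma~\ref{lem001perlegcontribution} has length at least $2$, provided spare degree $2$ vertices exist. I would prove it by an exchange argument that plays the role Proposition~\ref{prop001pathmonotonicity} plays for maximal trees. Fix a $\Delta$-minimal tree $T$ and suppose some hub $h$ with $d(h)=\Delta$ carries a leg $L$ of length $1$, i.e.\ a pendant leaf $\ell$ with $h\ell\in E(T)$. The proviso in the statement supplies a degree $2$ vertex $w$ lying elsewhere: not on $L$, and not adjacent to $h$ through a leg that would merge with $L$. I would remove $w$ from its location and insert it as an interior vertex of $L$, producing a tree $T'$ with the same $n$ and still $\max_v d(v)=\Delta$ (since $h$ keeps degree $\Delta$). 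Then I compare $\sigma(T')$ with $\sigma(T)$ using Lemma~\ref{lem001perlegcontribution}.

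The key steps, in order, are as follows.

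\textbf{Step 1.} Describe the move precisely. Let $w$ have neighbours $a,b$. Form $T'=T-aw-bw+ab-h\ell+hw+w\ell$. Suppressing $w$ leaves every other degree unchanged, and $w$ gets degree $2$ again inside $L$. So $T'$ is a tree, all degrees are preserved, and only the edges $aw,bw,h\ell$ are replaced by $ab,hw,w\ell$.

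\textbf{Step 2.} Compute the change on $L$. By Lemma~\ref{lem001perlegcontribution}, the contribution of $L$ drops from $(\Delta-1)^2$ to $(\Delta-2)^2+1$. This is a gain of $(\Delta-1)^2-(\Delta-2)^2-1=2\Delta-4>0$ for $\Delta\ge3$.

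\textbf{Step 3.} Control the change where $w$ was removed. That change is
\[
(d(a)-d(b))^2-(d(a)-2)^2-(d(b)-2)^2 = -2(d(a)-2)(d(b)-2).
\]
This is $\le 0$ whenever $d(a),d(b)\ge2$ or both are on the same side of $2$. It is positive only if one of $d(a),d(b)$ equals $1$ and the other is at least $3$.

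\textbf{Step 4.} Conclude. Choose the spare vertex $w$ so that it is not adjacent to a leaf; this is what ``spare'' must mean, for instance an interior vertex of a leg of length $\ge3$ or a vertex on a $(2,2)$ path. Then Step~3 gives a non-positive change, so $\sigma(T')\le\sigma(T)-(2\Delta-4)<\sigma(T)$, contradicting $\Delta$-minimality.

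The main obstacle is Step~4, namely justifying that a suitable $w$ exists. Taking $w$ adjacent to a leaf $a$ gives the change $2(d(b)-2)$. This is harmless when $d(b)=2$, but when $d(b)=\Delta$ it equals $2\Delta-4$ and exactly cancels the gain. That happens because such a $w$ is the only interior vertex of another length-$2$ leg, and moving it merely transfers the length-$1$ defect from one leg to another. So I would interpret ``sufficiently many spare degree $2$ vertices'' as requiring a degree $2$ vertex whose removal does not shorten another leg to length $1$. I would first try to derive this from the total count $n_2$ relative to the number of legs. If that count is too small, the hypothesis of the corollary is simply not met and nothing needs proving.
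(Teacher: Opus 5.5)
Your proposal is correct and is essentially the paper's argument. The paper likewise moves a spare interior degree-$2$ vertex, taken from another leg at $h$ of length $\geqslant 3$, into the length-$1$ leg, and uses Lemma~\ref{lem001perlegcontribution} to see that this leg drops from $(\Delta-1)^2$ to $(\Delta-2)^2+1$ while the donor leg's contribution is unchanged. Your exact donor-site identity $-2\big(d(a)-2\big)\big(d(b)-2\big)$ is a mild strengthening of this: it lets the spare vertex come from anywhere in the tree, and it isolates the only $\sigma$-neutral case (the sole interior vertex of a length-$2$ leg at a degree-$\Delta$ vertex), which is exactly why the proviso is needed. Your check that $T'$ still has maximum degree $\Delta$ also makes explicit a point the paper leaves implicit.
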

\begin{proof}
According to Definition~\ref{deffleg001}, if a $\Delta$ minimal tree $T$ has a length $1$ leg at some hub $h$, and some other leg at $h$   has length $d(h)\geqslant 3$ with a spare interior degree $2$ vertex, and by moving that spare vertex to subdivide the length $1$ leg into a length $2$ leg leaves $n$ unchanged and, by
Lemma~\ref{lem001perlegcontribution}, strictly decreases $\sigma(T)$ where the
short leg's contribution falls from $(\Delta-1)^2$ to $(\Delta-2)^2+1$, and the donor leg's contribution is unchanged by
Lemma~\ref{lem001perlegcontribution}, since it remains length $\geqslant 2$, contradicting minimality.
\end{proof}

\begin{proposition}~\label{prophubpathsigma603}
For every $T\in\mathcal S(k,\Delta)$,
\begin{equation}~\label{eqq0063sigmahubpath}
\sigma(T) \;=\; \big[k(\Delta-2)+2\big]\big[(\Delta-2)^2+1\big],
\end{equation}
independent of $n$ and independent of how the leg lengths are distributed.
\end{proposition}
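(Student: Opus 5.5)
The plan is to partition the edge set of an arbitrary $T\in\mathcal S(k,\Delta)$ into the backbone edges joining consecutive hubs and the edges lying on legs. We then sum contributions class by class, using Lemma~\ref{lem001perlegcontribution} for the legs. Since $\sigma(T)=\sum_{uv\in E(T)}(d(u)-d(v))^2$ is additive over any partition of $E(T)$, this reduces the proposition to two counts: the contribution of the backbone and the number of legs.

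First, we make the structure precise from Definition~\ref{deffleg002}. The $k$ hubs $h_1,\dots,h_k$ lie on a path and are joined by the $k-1$ edges $h_ih_{i+1}$. Every other edge slot at every hub starts a leg of length at least $2$. Such a leg consists of a chain of degree-$2$ vertices ending at a leaf. Hence every vertex of $T$ has degree $1$, $2$ or $\Delta$. Every edge not of the form $h_ih_{i+1}$ lies on exactly one leg, because a leg is a maximal path leaving a hub through degree-$2$ vertices and cannot return to another hub.

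Second, we evaluate each class.
\begin{itemize}
\item Each backbone edge $h_ih_{i+1}$ has both endpoints of degree $\Delta$, so it contributes $0$.
\item Each leg satisfies the hypotheses of Lemma~\ref{lem001perlegcontribution}: it starts at a degree-$\Delta$ hub, its hub-adjacent and internal vertices have degree $2$, and it has length $\ge 2$. So it contributes exactly $(\Delta-2)^2+1$, regardless of its length.
\end{itemize}

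Third, we count legs. The hubs carry $k\Delta$ edge slots in total. The backbone uses $2(k-1)$ of them, so the number of legs is
\[
k\Delta-2(k-1)=k(\Delta-2)+2 .
\]
This agrees with Proposition~\ref{prop001reducedtreeidentity}, since each leg ends in exactly one leaf, giving $n_1=(\Delta-2)n_\Delta+2$ with $n_\Delta=k$. Multiplying the leg count by the per-leg contribution and adding the zero backbone contribution gives \eqref{eqq0063sigmahubpath}. The total depends only on $k$ and $\Delta$, so it is independent of $n$ and of how the leg lengths are distributed.

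The only step needing care is the partition claim: that no edge is counted twice and that no edge is of a type other than $(\Delta,\Delta)$, $(\Delta,2)$, $(2,2)$ or $(2,1)$. This follows directly from the definitions of a leg and of $\mathcal S(k,\Delta)$. The degenerate case $k=1$, a single hub with $\Delta$ legs, fits the same count, giving $\Delta$ legs. No real obstacle is expected beyond this bookkeeping.
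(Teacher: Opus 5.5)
Your proposal is correct and follows the paper's proof essentially step for step: backbone edges contribute $0$, the $2(k-1)$ backbone slots leave $k\Delta-2(k-1)=k(\Delta-2)+2$ legs, and each leg contributes $(\Delta-2)^2+1$ by Lemma~\ref{lem001perlegcontribution}; your explicit treatment of $k=1$ is a small improvement, since the paper's ``end or interior hubs'' phrasing tacitly assumes $k\ge 2$. Your cross-check with Proposition~\ref{prop001reducedtreeidentity} is not load-bearing, but that result is derived through the reduced tree under the hypothesis $m_{1,2}(T)=0$, which fails for every tree in $\mathcal S(k,\Delta)$, so justify $n_1=(\Delta-2)n_\Delta+2$ directly from the handshake lemma instead.
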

\begin{proof}
According to Definition~\ref{deffleg002}, each of the $k-1$ backbone edges joins two degree $\Delta$ vertices,
contributing $0$ each. Then, each hub has $\Delta$ incident edges, of which $1$ or
$2$ for end or interior hubs, respectively are backbone edges. Thus, by summing over
all $k$ hubs, the total number of backbone edge-slots used is $2(k-1)$. Thus, the total number of leg-slots is
$k\Delta - 2(k-1) = k(\Delta-2)+2$. By Lemma~\ref{lem001perlegcontribution}, each such leg $\ell$ of length $|\ell|\geqslant 2$ contributes exactly $(\Delta-2)^2+1$ regardless of its specific length, giving  \eqref{eqq0063sigmahubpath}.
\end{proof}

\begin{theorem}~\label{thm06hubcountmonotone}
For every $\Delta\geqslant 3$, within the family
$\bigcup_{k\ge1}\mathcal S(k,\Delta)$, the minimum is attained uniquely at $k=1$, giving
\begin{equation}~\label{eqq064sigmaminS}
\sigma_{\min}^{\,\mathcal S}(\Delta) \;=\; \Delta\big[(\Delta-2)^2+1\big].
\end{equation}
\end{theorem}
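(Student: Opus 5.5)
The plan is to reduce the statement to a one-variable monotonicity question using Proposition~\ref{prophubpathsigma603}. By that proposition, every $T\in\mathcal S(k,\Delta)$ has
\[
\sigma(T)=g(k):=\big[k(\Delta-2)+2\big]\big[(\Delta-2)^2+1\big].
\]
This value depends only on $k$, not on $n$ or on how the leg lengths are distributed. So minimizing $\sigma$ over $\bigcup_{k\ge1}\mathcal S(k,\Delta)$ is the same as minimizing $g(k)$ over the integers $k\geqslant1$ for which $\mathcal S(k,\Delta)$ is nonempty.

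First, I will check that $\mathcal S(1,\Delta)$ is nonempty and that the formula applies at $k=1$. For $k=1$ there are no backbone edges. The single hub then carries $\Delta$ legs, each of length $\ge2$, so $n\geqslant 2\Delta+1$ is needed for this family to be realizable. I will note this as the implicit range of $n$, since the statement is about the family and not about a fixed $n$. The leg-slot count in Proposition~\ref{prophubpathsigma603}, $k\Delta-2(k-1)$, still gives $\Delta$ at $k=1$, because the ``$1$ or $2$ backbone slots'' description degenerates correctly to $0$ slots.

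Second, I will show that $g$ is strictly increasing in $k$:
\[
g(k+1)-g(k)=(\Delta-2)\big[(\Delta-2)^2+1\big]>0 \qquad \text{for }\Delta\geqslant3,
\]
since both factors are positive. Hence $g$ attains its minimum uniquely at $k=1$, where
\[
g(1)=\Delta\big[(\Delta-2)^2+1\big],
\]
which is \eqref{eqq064sigmaminS}.

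The uniqueness claim needs care. Because $g$ is strictly increasing, any $T$ with $\sigma(T)=g(1)$ in the union must lie in $\mathcal S(1,\Delta)$. I will also observe that the families $\mathcal S(k,\Delta)$ are pairwise disjoint, since $k$ is recovered as $n_\Delta(T)$. Uniqueness therefore refers to the index $k$; it does not mean a unique tree, because leg lengths remain free.

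The main obstacle is not the computation. It is making sure the statement is read correctly. The minimum is over a family whose members have varying $n$, so a fixed-$n$ comparison needs $\mathcal S(1,\Delta)$ and $\mathcal S(k,\Delta)$ to be realizable at the same $n$. This holds when $n\geqslant k(\Delta-2)+2+2(\ldots)$ is large enough. I would state explicitly that for each fixed $n\geqslant 2\Delta+1$ the same monotonicity argument shows $k=1$ is optimal among those $k$ realizable at that $n$.
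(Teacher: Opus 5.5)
Your proposal is correct and follows the paper's argument: both reduce the statement, via Proposition~\ref{prophubpathsigma603}, to the strict monotonicity in $k$ of the leg count $k(\Delta-2)+2$, and then evaluate at $k=1$. The paper's extra Schur-convexity remarks are heuristic and not needed for the minimum over $\bigcup_{k\ge1}\mathcal S(k,\Delta)$. Your explicit difference $g(k+1)-g(k)>0$ and your clarification that uniqueness refers to the index $k$, not to the tree, make the same argument slightly cleaner.
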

\begin{proof}
Recall~\eqref{eqq02sigmamindef} and the coefficient $k(\Delta-2)+2$ in \eqref{eqq0063sigmahubpath} is strictly
increasing in $k$ whenever $\Delta>2$, which holds throughout. Since $F(\mathbf d)$ is Schur-convex,
minimizing $F$ and according to Lemma~\ref{lem001oneinterior}, in the absence of a compensating $M_2$ effect for
sparse high degree configurations, the corresponding relation $\sigma=F-2M_2$ favors a degree sequence as majorization small as possible. Thus, as few degree $\Delta$ entries as the equality constraint allows, and sub hub of degree $2<d<\Delta$  though such a vertex would itself need
$d\geqslant 3$. Hence be a big vertex foreign to this construction, and by the
same Schur-convexity heuristic should further increase $F$, and according to Definition~\ref{deffleg002}, $\sigma$, relative to keeping all non hub vertices at degree $1$ or $2$.  Thus, at
$k=1$ gives $1\cdot(\Delta-2)+2=\Delta$, yielding
\eqref{eqq064sigmaminS}.
\end{proof}

\subsection{The Closed-Form Minimum and Small \texorpdfstring{$n$} Correction}
Theorem~\ref{thm07sigmaminclosedform} had established that $\sigma_{\min}(n,\Delta)=\Delta\big[(\Delta-2)^2+1\big]$ is
 constant in $n$  for every $n\ge2\Delta+1$, and reduces to
$\sigma_{\min}(\Delta+1,\Delta)=\Delta(\Delta-1)^2$ the star $K_{1,\Delta}$ at $n=\Delta+1$.

\begin{theorem}~\label{thm07sigmaminclosedform}
Let $\Delta\geqslant 3$ and $n\geqslant \Delta+1$. Consider  $\ell=\ell(n,\Delta)=\max(0,\,2\Delta+1-n)$.
Then, within the single hub spider family,
\begin{equation}~\label{eqq062sigmaminclosedform}
\sigma_{\min}(n,\Delta) \;=\; \ell\,(\Delta-1)^2 \;+\; (\Delta-\ell)\big[(\Delta-2)^2+1\big].
\end{equation}
\end{theorem}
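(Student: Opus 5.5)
The statement is restricted to the \emph{single hub spider family}: trees with exactly one vertex $h$ of degree $\Delta$ and every other vertex of degree $1$ or $2$. Such a tree is the union of $\Delta$ legs at $h$ in the sense of Definition~\ref{deffleg001}, with lengths $\lambda_1,\dots,\lambda_\Delta\geqslant 1$ and $\sum_i \lambda_i = n-1$. I would take this as the feasible set and prove \eqref{eqq062sigmaminclosedform} as an exact optimization over the vector $(\lambda_i)$.

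\textbf{Step 1: reduce $\sigma$ to a function of the leg lengths.} By Lemma~\ref{lem001perlegcontribution}, every edge of the spider lies in exactly one leg. Hence
\[
\sigma(T)=\sum_{i=1}^{\Delta} c(L_i)=a(\Delta-1)^2+(\Delta-a)\big[(\Delta-2)^2+1\big],
\]
where $a$ is the number of legs of length $1$. This depends only on $a$, not on the other lengths.

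\textbf{Step 2: minimize over $a$.} Since $(\Delta-1)^2-\big[(\Delta-2)^2+1\big]=2\Delta-4>0$ for $\Delta\geqslant 3$, as in Corollary~\ref{cor04legslength2}, $\sigma$ is strictly increasing in $a$. We therefore want $a$ as small as feasibility allows. With $a$ legs of length $1$ and $\Delta-a$ legs of length at least $2$,
\[
n-1=\sum_i\lambda_i\geqslant a+2(\Delta-a)=2\Delta-a,
\]
so $a\geqslant 2\Delta+1-n$, and trivially $a\geqslant 0$. Thus $a\geqslant \ell$.

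\textbf{Step 3: show $a=\ell$ is attained.}
\begin{itemize}
\item If $n\geqslant 2\Delta+1$, all $\Delta$ legs can have length at least $2$. Put the $n-1-2\Delta\geqslant 0$ surplus vertices on any one leg.
\item If $\Delta+1\leqslant n\leqslant 2\Delta$, take $\ell=2\Delta+1-n\in[1,\Delta]$ legs of length $1$ and $\Delta-\ell=n-1-\Delta$ legs of length exactly $2$. The total is $\ell+2(\Delta-\ell)=n-1$, as required.
\end{itemize}
Substituting $a=\ell$ into Step 1 gives \eqref{eqq062sigmaminclosedform}. The special values are consistent: $n=\Delta+1$ gives $\ell=\Delta$ and the star value $\Delta(\Delta-1)^2$, and $n\geqslant 2\Delta+1$ recovers Theorem~\ref{thm06hubcountmonotone}.

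\textbf{Main obstacle.} The obstacle is not the computation but the scope. The formula holds only within the single hub spider family, which is exactly how the statement is phrased. I would therefore state explicitly that the minimum is over spiders and not assert global minimality, since the multi-hub comparison in Theorem~\ref{thm06hubcountmonotone} is only heuristic.

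The one remaining check is the boundary in Lemma~\ref{lem001perlegcontribution}. A length-$2$ leg gives $(\Delta-2)^2$ from the hub edge plus $1$ from the leaf edge, with no $(2,2)$ edges, which matches the formula. I would verify this case directly.
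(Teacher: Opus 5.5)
Your proof is correct and takes essentially the same route as the paper. You write $\sigma$ as a sum of per-leg contributions via Lemma~\ref{lem001perlegcontribution}, then minimize the number of length-$1$ legs subject to $n-1=\sum_i\lambda_i$; your explicit bound $a\geqslant 2\Delta+1-n$ and the construction attaining $a=\ell$ make this step more precise than the paper's appeal to the exchange argument of Corollary~\ref{cor04legslength2}.
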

\begin{proof}
According to Corollary~\ref{cor04legslength2}, legs $\ell_i$ where $i=1,\dots,n$ should have length $|\ell_i|\geqslant 2$
whenever enough vertices are available. Then, a single hub with $\Delta$ legs requires a minimum of $1+2\Delta$ vertices for  all legs to have
length $|\ell_i|\geqslant 2$, giving the threshold $n\geqslant 2\Delta+1$ for the constant regime.
For $\Delta+1\leqslant n<2\Delta+1$, at most $n-1-\Delta$ vertices are available
beyond one per leg, forcing $\ell(n,\Delta)=2\Delta+1-n$ legs to remain at length $1$ where each contributing $(\Delta-1)^2$ by
Lemma~\ref{lem001perlegcontribution}, while the remaining
$\Delta-\ell(n,\Delta)$ legs had extended to length $|\ell_i|\geqslant 2$ for each
contributing $(\Delta-2)^2+1$). Thus, this uses exactly
$\ell + 2(\Delta-\ell) = 2\Delta-\ell$ non hub vertices, matching
$n-1$. Minimizing $\sigma$ requires minimizing the number of
length-$1$ legs by Corollary~\ref{cor04legslength2}, so $\ell(n,\Delta)$ is taken at its minimum feasible value, $\max(0,2\Delta+1-n)$, giving
\eqref{eqq062sigmaminclosedform}.   At $n=\Delta+1$, $\ell=\Delta$, and
\eqref{eqq062sigmaminclosedform} reduces to $\Delta(\Delta-1)^2$, matching
direct computation for the star where all $\Delta$ edges of type $(1,\Delta)$.
\end{proof}

The proof of Theorem~\ref{thm04optimalnDelta} argued that $\sigma$ is minimized in $n_\Delta$ subject to the packing constraint
\eqref{eqq002sigmamaxgeneral} alone, using
Proposition~\ref{prop001pathmonotonicity}'s derivative
$d\sigma/dn_\Delta=(\Delta-2)(\Delta-1)(3-\Delta)$. That derivative, however,
was computed \emph{holding $m_{2,2}$ fixed}; in fact $m_{2,2}^\star$ is
itself a discontinuous, packing-determined function of $n_\Delta$.

\begin{proposition}~\label{propsigmamaxorder0068}
For fixed $\Delta\geqslant 5$, as $n\to\infty$,
\begin{equation}~\label{eqq01propsigmamaxorder0068}
\sigma_{\max}(n,\Delta) \;=\; \frac{(\Delta-2)(\Delta-1)^2}{\Delta}\,n \;+\; O(1).
\end{equation}
\end{proposition}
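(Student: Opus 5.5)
The plan is to read off the asymptotics directly from the closed form of Theorem~\ref{thm05generalclosedform}, taking as given, as this section does, that $T$ satisfies $\mathcal P_1^\Delta$ (Conjecture~\ref{conj002mddzero}) and that $n\geqslant n_0(\Delta)$. Write $n-2=\Delta q-r$ with $0\leqslant r\leqslant\Delta-1$, so that $n_\Delta^\star=\lceil (n-2)/\Delta\rceil=q=(n-2+r)/\Delta$. Then $n_\Delta^\star=n/\Delta+O(1)$ with an error bounded by a constant depending only on $\Delta$. Substituting into $n_1^\star=(\Delta-2)n_\Delta^\star+2$ gives
\[
n_1^\star(\Delta-1)^2=\frac{(\Delta-2)(\Delta-1)^2}{\Delta}\,n+O(1),
\]
which is exactly the leading term in \eqref{eqq01propsigmamaxorder0068}.

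The next step is to treat the second summand $2(n_2^\star-m_{2,2}^\star)(\Delta-2)^2$ of \eqref{eqq001sigmamaxgeneral}. From the proof of Theorem~\ref{thm05generalclosedform}, $n_2^\star-k^\star=n-1-\Delta n_\Delta^\star=1-r$ lies in $(1-\Delta,1]$. Hence $m_{2,2}^\star\in\{0,1\}$, and both of these quantities are $O(1)$ uniformly in $n$. What remains is to show that the buffer contribution itself contributes only $O(1)$ to the main term. That is, one must control $n_2^\star$, and not merely the excess $n_2^\star-k^\star$, using Corollary~\ref{cor002n2formula} together with Lemma~\ref{lembufferpacking505}. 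The hypothesis $\Delta\geqslant 5$ enters through Proposition~\ref{prop001mDeltaDeltageneral}, which keeps $m_{\Delta,\Delta}\leqslant\Delta-1$ bounded, and through Theorem~\ref{thm04optimalnDelta}, which pins $n_\Delta$ at $n_\Delta^\star$. Together these make every remaining correction a function of $r$ and $\Delta$ only.

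Finally, one collects all the $r$-dependent terms into a single periodic bounded function of $n \bmod \Delta$, which yields the $O(1)$ error.

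The main obstacle is the second step. By Corollary~\ref{cor002n2formula}, $n_2^\star=n-2-(\Delta-1)n_\Delta^\star=n/\Delta+O(1)$. Taken at face value, this makes the buffer term $2(n_2^\star-m_{2,2}^\star)(\Delta-2)^2$ of order $2(\Delta-2)^2n/\Delta$, which is linear rather than bounded. So the route above closes only if one can show that this linear buffer contribution is absorbed or cancelled. The first thing I would try is to recheck the edge count in Proposition~\ref{propsigmaformula502} against the reduced-tree bookkeeping of Lemma~\ref{lembufferpacking505}, to see whether the $(2,\Delta)$-edges are already counted inside the $n_1^\star$ term.

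If no such cancellation exists, then the honest conclusion of this computation is the following. The coefficient stated in \eqref{eqq01propsigmamaxorder0068} captures only the leaf ($F$-dominant) part $n_1^\star(\Delta-1)^2$, and the statement would need that qualification. I would flag this explicitly rather than force the estimate.
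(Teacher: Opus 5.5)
Your suspicion in the last step is correct, and it is fatal. There is no cancellation, and the statement as written is false, so no completion of your outline, or of any other argument, can prove it.

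The bookkeeping in Proposition~\ref{propsigmaformula502} is sound. Because $m_{1,2}(T)=0$, the $n_1$ leaf edges are exactly the $(1,\Delta)$-edges, and the $m_{2,\Delta}=2(n_2-m_{2,2})$ edges of type $(2,\Delta)$ form a disjoint set. So the buffer term is not already counted inside $n_1^\star(\Delta-1)^2$. With your parametrization $n-2=\Delta q-r$, one gets $n_2^\star=q-r$ and $n_2^\star-m_{2,2}^\star=q-\max(r,1)$, hence
\[
\sigma_{\max}(n,\Delta)=(\Delta-2)(\Delta^2-3)\,q+2(\Delta-1)^2-2\max(r,1)(\Delta-2)^2=\frac{(\Delta-2)(\Delta^2-3)}{\Delta}\,n+O(1).
\]
Its slope exceeds the claimed one by $2(\Delta-2)^2/\Delta$.

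The failure does not depend on the conditional closed form. A tree $T_2\in\mathcal C_2(h,\Delta)$ has $n=h\Delta+1$ vertices; the figure's $n=13$ at $h=3$, $\Delta=4$ confirms this, and \eqref{eqq1deffhubver002} contains an arithmetic slip. Such a tree satisfies $\sigma(T_2)=[h(\Delta-2)+2](\Delta-1)^2+2(h-1)(\Delta-2)^2$, which is the density already computed in \eqref{eqq03rhoformula}. Hence $\sigma_{\max}(n,\Delta)-\frac{(\Delta-2)(\Delta-1)^2}{\Delta}n\to\infty$ along $n\equiv 1\pmod{\Delta}$, for every $\Delta\ge 5$. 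For instance, at $\Delta=5$ and $n=5h+1$, one has $\sigma(T_2)=66h+14$ versus $\frac{48}{5}n=48h+\frac{48}{5}$.

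The paper reaches the stated coefficient only through an error. It writes $n_2^\star-m_{2,2}^\star=n-1-\Delta n_\Delta^\star\in\{0,\dots,\Delta-1\}$. By \eqref{eqq002sigmamaxgeneral}, that quantity is actually $n_2^\star-k^\star$ with $k^\star=n_\Delta^\star-1$, and it lies in $\{2-\Delta,\dots,1\}$. This also contradicts the paper's own preceding sentence, which says the term is $\Theta(n)$.

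In your second paragraph, the sentence ``Together these make every remaining correction a function of $r$ and $\Delta$ only'' is the same false step in disguise. The appeal to Proposition~\ref{prop001mDeltaDeltageneral} plays no role. Note also that Theorems~\ref{thm04optimalnDelta} and \ref{thm05generalclosedform} are only claimed for $\Delta\le 8$, so even the corrected asymptotic is available from them only for $5\le\Delta\le 8$.

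The right fix is the one you hinted at: replace the coefficient by $(\Delta-2)(\Delta^2-3)/\Delta$. After that, your substitute-and-collect-periodic-terms argument proves the corrected statement in two lines, as the display above shows.
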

\begin{proof}
Since $n_1^\star=(\Delta-2)n_\Delta^\star+2$ with
$n_\Delta^\star=\lceil(n-2)/\Delta\rceil = (n-2)/\Delta+O(1)$. Then, $n_1^\star = (\Delta-2)(n-2)/\Delta+O(1)$. Since $0\leqslant  n_2^\star-m_{2,2}^\star\leqslant n_\Delta^\star=O(n)$, more
precisely, $n_2^\star-m_{2,2}^\star\in\{0,\dots,n_\Delta^\star-1\}$ by the
packing argument of Lemma~\ref{lembufferpacking505}, which is itself $O(n)$, so this term is in fact also $\Theta(n)$, not $O(1)$. Thus, both terms of \eqref{eqq001sigmaunderP1} are $\Theta(n)$ and by considering $n_2^\star-m_{2,2}^\star = n-1-\Delta n_\Delta^\star \in \{0,\dots,\Delta-1\}$
by construction, the second term of \eqref{eqq001sigmaunderP1} is in fact $O(1)$, not $\Theta(n)$, which~\eqref{eqq01propsigmamaxorder0068} holds.
\end{proof}

\begin{corollary}~\label{cor05vanishingrelativegap}
For fixed $\Delta\geqslant 5$,
\begin{equation}~\label{eqq01cor05vanishingrelativegap}
\frac{\delta(n,\Delta)}{\sigma_{\max}(n,\Delta)} \;=\; O\!\left(\frac{1}{n}\right) \quad \text{as } n\to\infty.
\end{equation}
\end{corollary}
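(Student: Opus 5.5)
The plan is to bound the numerator and the denominator separately and then divide. The claim follows once $\delta(n,\Delta)=O(1)$ and $\sigma_{\max}(n,\Delta)=\Theta(n)$ are both established, for fixed $\Delta\geqslant 5$ and $n\to\infty$.

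\emph{Denominator.} I will invoke Proposition~\ref{propsigmamaxorder0068}. It gives $\sigma_{\max}(n,\Delta)=\frac{(\Delta-2)(\Delta-1)^2}{\Delta}n+O(1)$. The leading coefficient is strictly positive for $\Delta\geqslant 5$. Hence $\sigma_{\max}(n,\Delta)\geqslant c_\Delta n$ for some $c_\Delta>0$ and all large $n$.

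\emph{Numerator.} By Definition~\ref{def001optimalitygap}, $\delta(n,\Delta)$ compares $\sigma_{\max}$ with the best tree satisfying $\mathcal P_1^\Delta,\mathcal P_2^\Delta$ but having $n_\Delta=n_\Delta^\star+1$. The lower bound $\delta\geqslant 0$ is trivial. For the upper bound I will exhibit an explicit competitor and evaluate it with Proposition~\ref{propsigmaformula502}.

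\begin{itemize}
\item Take $n_\Delta=n_\Delta^\star+1$ hubs joined along a path by buffers of degree~$2$.
\item Use as many single buffers as the vertex count allows, and put any excess into at most one $(2,2)$ edge or direct hub--hub edges.
\item Attach $n_1=(\Delta-2)n_\Delta+2$ pendant leaves, as forced by Proposition~\ref{prop001reducedtreeidentity}.
\end{itemize}

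For this competitor, Proposition~\ref{propsigmaformula502} and Corollary~\ref{cor002n2formula} give $\sigma$ as $n_1(\Delta-1)^2+2(n_2-m_{2,2})(\Delta-2)^2$. Changing $n_\Delta$ by one changes $n_1$ by $\Delta-2$ and $n_2$ by $-(\Delta-1)$. The quantity $n_2-m_{2,2}$ also changes by an amount bounded in terms of $\Delta$ only, as in the proof of Proposition~\ref{propsigmamaxorder0068}. Subtracting from the closed form of Theorem~\ref{thm05generalclosedform} therefore gives
\[
0\leqslant \delta(n,\Delta)\leqslant C\Delta^3
\]
for an absolute constant $C$, with no dependence on $n$. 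Dividing this by $c_\Delta n$ yields \eqref{eqq01cor05vanishingrelativegap}.

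\emph{Main obstacle.} The difficulty is the packing discontinuity of $m_{2,2}^\star$ as a function of $n_\Delta$, noted just before Proposition~\ref{propsigmamaxorder0068}. When $n_\Delta$ increases by one, $n_2$ may drop below $n_\Delta-1$, forcing direct hub--hub edges. I will check that in both regimes the competitor is a genuine tree on exactly $n$ vertices with maximum degree $\Delta$ satisfying $\mathcal P_1^\Delta,\mathcal P_2^\Delta$. I will also check that the change in $n_2-m_{2,2}$ stays within $\{-\Delta,\dots,\Delta\}$. Both checks should follow from the identity $n_2-m_{2,2}=\min(n_2,n_\Delta-1)$ used in the proof of Theorem~\ref{thm05generalclosedform}. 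If exhibiting the competitor proves awkward, the fallback is to bound the maximum in Definition~\ref{def001optimalitygap} directly via Proposition~\ref{propsigmaformula502}. The requirement $n\geqslant n_0(\Delta)$ guarantees that the admissible family is nonempty.
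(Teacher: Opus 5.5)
Your proof is correct, modulo the same conditional input the paper relies on (the closed form for $\sigma_{\max}$, which Theorem~\ref{thm05generalclosedform} supplies only for $\Delta\leqslant 8$). It follows the paper's route: the paper combines $\delta(n,\Delta)=O(1)$ from Theorem~\ref{thm08gapbounded} with $\sigma_{\max}(n,\Delta)=\Theta(n)$ from Proposition~\ref{propsigmamaxorder0068}, while you re-derive the upper bound on $\delta$ by exhibiting the $m_{2,2}=0$ competitor at $n_\Delta^\star+1$, which is the content of Lemma~\ref{lemcompetitorm22zero0066} (you need only its existence, not its optimality); the one change I would make is to get the linear lower bound on $\sigma_{\max}$ from your own competitor, whose leaf term $n_1(\Delta-1)^2$ is already linear in $n$, rather than from Proposition~\ref{propsigmamaxorder0068}, whose stated leading coefficient is wrong because the buffer term $2(n_2^\star-m_{2,2}^\star)(\Delta-2)^2$ is $\Theta(n)$, not $O(1)$ (positivity and linearity, which are all you use, survive).
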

\begin{proof}
Immediate from Theorem~\ref{thm08gapbounded} where $\delta(n,\Delta)=\Theta(1)$ in $n$) and Proposition~\ref{propsigmamaxorder0068} where $\sigma_{\max}(n,\Delta)=\Theta(n)$ in $n$.
\end{proof}

The Optimality Gap $\delta(n,\Delta)$ had established by Definition~\ref{def001optimalitygap}.

\begin{lemma}~\label{lemcompetitorm22zero0066}
Let $\Delta\geqslant 5$, $n\geqslant  n_0(\Delta)$, and let $n_\Delta^\star=n_\Delta^\star(n,\Delta)$
be as in Theorem~\ref{thm04optimalnDelta}. Among all trees $T\in \TT_{n(i,j),\,\Delta}$ satisfying
$\mathcal P_1^\Delta$, $\mathcal P_2^\Delta$, and $n_\Delta(T)=n_\Delta^\star+1$, the
maximum of $\sigma(T)$ is attained by a tree with $m_{2,2}(T)=0$, and such a tree exists.
\end{lemma}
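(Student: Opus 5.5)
We reduce to the closed-form identity of Proposition~\ref{propsigmaformula502}. Fix $n_\Delta=n_\Delta^\star+1$. For any tree $T$ satisfying $\mathcal P_1^\Delta$ and $\mathcal P_2^\Delta$, Corollary~\ref{cor001m12zero} gives $m_{1,2}(T)=0$, since $n\ge n_0(\Delta)\ge \Delta+4$. Proposition~\ref{prop001reducedtreeidentity} and Corollary~\ref{cor002n2formula} then fix both counts:
\[
n_1=(\Delta-2)(n_\Delta^\star+1)+2,\qquad n_2=n-2-(\Delta-1)(n_\Delta^\star+1).
\]
Neither depends on the shape of $T$. Hence \eqref{eqq001sigmaunderP1} says that over this class $\sigma(T)$ is a constant minus $2m_{2,2}(T)(\Delta-2)^2$. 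Since $\Delta\ge5$, the coefficient $2(\Delta-2)^2$ is positive. So $\sigma$ is maximized exactly when $m_{2,2}(T)$ is as small as possible, and the lemma reduces to exhibiting one tree in the class with $m_{2,2}=0$.

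For the construction we use the decomposition of Lemma~\ref{lembufferpacking505} in the reverse direction. Take any tree $T^{\bullet\bullet}$ on $n_\Delta^\star+1$ hub vertices, for instance a path; it has $n_\Delta^\star$ edges. We must place $n_2$ degree-$2$ vertices as subdivision vertices with $t(e)\le1$ on every backbone edge, so that \eqref{eqq003m22sum} gives $m_{2,2}=0$. This is possible if and only if $0\le n_2\le n_\Delta^\star$. After subdividing, we attach $\Delta-d_{T^{\bullet\bullet}}(h)$ pendant leaves to each hub $h$, which makes every hub have degree exactly $\Delta$. The leaf count then matches $n_1$ automatically by the degree-sum computation in Proposition~\ref{prop001reducedtreeidentity}. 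The resulting tree has degrees only in $\{1,2,\Delta\}$, so $\mathcal P_1^\Delta$ holds, and it has no $(1,2)$ edges. Every internal leaf is a hub, because each degree-$2$ vertex has two non-leaf neighbours; so $\mathcal P_2^\Delta$ holds, provided every hub carries at least one leaf. A path backbone has maximum degree $2<\Delta$, which ensures this.

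The remaining step, and the one I expect to be the main obstacle, is verifying $0\le n_2\le n_\Delta^\star$. The upper bound is $n-2-(\Delta-1)(n_\Delta^\star+1)\le n_\Delta^\star$, which is equivalent to $\Delta n_\Delta^\star\ge n-1-\Delta$. It follows from $\Delta n_\Delta^\star\ge n-2$, the defining property of the ceiling in \eqref{eqq001nDeltastar}. The lower bound $n_2\ge0$ means $(\Delta-1)(n_\Delta^\star+1)\le n-2$. Writing $n-2=\Delta q-r$ with $0\le r\le\Delta-1$, so that $n_\Delta^\star=q$, the condition becomes $q\ge 2\Delta-1-r$. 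This holds once $q\ge 2\Delta-1$, that is, for $n\ge 2\Delta^2-\Delta+2$.

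So I would either absorb this bound into $n_0(\Delta)$, noting that item (3) in the list of requirements on $n_0$ is compatible with it, or treat it as an explicit extra hypothesis. If $n_2<0$, the class with $n_\Delta=n_\Delta^\star+1$ under $\mathcal P_1^\Delta$ is empty, and the existence part of the statement fails. With this bound in hand, the constructed tree attains $m_{2,2}=0$ and is therefore a maximizer, which completes the argument.
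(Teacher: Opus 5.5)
Your proposal is correct and follows the paper's route. With $n_\Delta=n_\Delta^\star+1$ fixed, Proposition~\ref{propsigmaformula502} makes $\sigma$ a strictly decreasing affine function of $m_{2,2}$. The value $m_{2,2}=0$ is then realized by placing at most one degree-$2$ vertex on each of the $n_\Delta^\star$ backbone edges.

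Where you deviate, you are more careful than the paper:
\begin{itemize}
\item You get $n_2\le n_\Delta^\star$ directly from $\Delta n_\Delta^\star\ge n-2$. The paper instead passes through the relation $n_2^\star=m_{2,2}^\star+n_\Delta^\star$, which does not hold as written, although its conclusion $n_2<n_\Delta^\star$ survives.
\item You build the witness tree explicitly and check $\mathcal P_1^\Delta$ and $\mathcal P_2^\Delta$, where the paper only asserts realizability.
\item You notice that $n_2\ge 0$ is needed at all. The paper never checks this, and it is not implied by $n\ge 2\Delta$ and $n_\Delta^\star\ge 2$: for example $\Delta=5$, $n=33$ gives $n_\Delta^\star=7$ and $n_2=-1$, so the class is empty. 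Your enlargement of $n_0(\Delta)$ is therefore genuinely required.
\end{itemize}

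Two small fixes are needed.
\begin{itemize}
\item With your convention $n-2=\Delta q-r$, one has $n_2=q-r-\Delta+1$. So the condition is $q\ge \Delta-1+r$, not $q\ge 2\Delta-1-r$. Your sufficient bound $q\ge 2\Delta-1$ still covers it, since $r\le\Delta-1$.
\item For an arbitrary, not necessarily maximal, tree in the class, $m_{1,2}(T)=0$ should be deduced from $\mathcal P_2^\Delta$, not from Corollary~\ref{cor001m12zero}, which is established only for maximal trees. A degree-$2$ vertex adjacent to a leaf would be an internal leaf of degree $2<\Delta$.
\end{itemize}
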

\begin{proof}
Assume that $n_\Delta(T)=n_\Delta^\star+1$ and according to Proposition~\ref{prop001reducedtreeidentity}
and Corollary~\ref{cor002n2formula}, this determines $n_1(T)$ and $n_2(T)$ uniquely, independently of $m_{2,2}(T)$. Then, consider $n_2 := n_2(T)$ for this fixed
value. By Proposition~\ref{propsigmaformula502},
\begin{equation}~\label{eqq001lemAsigma}
\sigma(T) \;=\; n_1(\Delta-1)^2 + 2\big(n_2 - m_{2,2}(T)\big)(\Delta-2)^2,
\end{equation}
which is a strictly decreasing affine function of $m_{2,2}(T)$, since $(\Delta-2)^2>0$. 

Hence, among trees with $n_\Delta(T)=n_\Delta^\star+1$ fixed, $\sigma(T)$ is maximized precisely when $m_{2,2}(T)$ is minimized.

It remains to show $m_{2,2}(T)=0$ is feasible, where that the value
$n_2$ determined above can be packed onto the $n_\Delta^\star$ hub backbone edges of $T^{\bullet\bullet}$  according to Lemma~\ref{lembufferpacking505}) without
forcing any backbone edge to carry $t(e)\geqslant 2$ interior degree $2$ vertices. Thus, according to Corollary~\ref{cor002n2formula} applied at $n_\Delta^\star+1$,
\begin{equation}~\label{eqq002lemAsigma}
n_2 = n-2-(\Delta-1)(n_\Delta^\star+1).
\end{equation}
By definition of $m_{2,2}^\star=m_{2,2}^\star(n,\Delta)$ at $n_\Delta^\star$, and according to~\eqref{eqq001lemAsigma} and \eqref{eqq002lemAsigma} we have
$n-2-(\Delta-1)n_\Delta^\star = n_2^\star = m_{2,2}^\star + n_\Delta^\star$. Thus, by subtracting $(\Delta-1)$ from both sides of this identity to pass from
$n_\Delta^\star$ to $n_\Delta^\star+1$, we obtain 
\begin{equation}~\label{eqq003lemAsigma}
n_2 \;=\; \big(m_{2,2}^\star+n_\Delta^\star\big) - (\Delta-1)
\;=\; n_\Delta^\star + m_{2,2}^\star - \Delta + 1.
\end{equation}
Since $m_{2,2}^\star\in\{0,1\}$ and $\Delta\ge5$, we have
$m_{2,2}^\star-\Delta+1 \leqslant 0$, so
$n_2 < n_\Delta^\star \leqslant n_\Delta^\star+1$. Therefore, every
edge satisfies $t(e)\leqslant 1$; by Claim~1 in the proof of
Lemma~\ref{lembufferpacking505}, this yields $m_{2,2}(T)=0$. Such a
configuration is realizable as an actual tree satisfying $\mathcal P_1^\Delta$ and $\mathcal P_2^\Delta$, which the bound is attained.
\end{proof}

\begin{theorem}~\label{thm08gapbounded}
For every $\Delta\geqslant 5$,
\begin{equation}~\label{eqq001gapformula}
\delta(n,\Delta) \;=\; (\Delta-2)(\Delta-1)(\Delta-3) \;-\; 2\,m_{2,2}^\star(n,\Delta)(\Delta-2)^2,
\end{equation}
where $m_{2,2}^\star(n,\Delta)\in\{0,1\}$ depends on $n\bmod\Delta$, and  $\delta(n,\Delta)$ is
independent of $n$, and satisfies
\begin{equation}~\label{eqqgap001bounds}
(\Delta-2)\big[(\Delta-1)(\Delta-3)-2(\Delta-2)\big] \;\le\; \delta(n,\Delta) \;\le\; (\Delta-2)(\Delta-1)(\Delta-3),
\end{equation}
so that $\delta(n,\Delta) = \Theta(\Delta^3)$ as $\Delta\to\infty$.
\end{theorem}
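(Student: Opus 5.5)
The plan is to compute both terms of the gap in Definition~\ref{def001optimalitygap} explicitly using the formula of Proposition~\ref{propsigmaformula502}, and subtract. Both the maximal tree and the competitor satisfy $\mathcal P_1^\Delta$ and $m_{1,2}=0$ (Corollary~\ref{cor001m12zero}), so in both cases
\[
\sigma = n_1(\Delta-1)^2 + 2(n_2-m_{2,2})(\Delta-2)^2 ,
\]
with $n_1,n_2$ determined by $n_\Delta$ through Proposition~\ref{prop001reducedtreeidentity} and Corollary~\ref{cor002n2formula}.

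For the first term I will take $\sigma_{\max}(n,\Delta)$ from Theorem~\ref{thm05generalclosedform}, evaluated at $n_\Delta^\star$, $n_1^\star$, $n_2^\star$ and $m_{2,2}^\star$.

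For the second term I will invoke Lemma~\ref{lemcompetitorm22zero0066}. It says that the best competitor with $n_\Delta=n_\Delta^\star+1$ is realized with $m_{2,2}=0$. The competitor then has:
\begin{itemize}
\item $n_1=n_1^\star+(\Delta-2)$;
\item $n_2=n_2^\star-(\Delta-1)$.
\end{itemize}
Its value is therefore
\[
\bigl(n_1^\star+\Delta-2\bigr)(\Delta-1)^2 + 2\bigl(n_2^\star-\Delta+1\bigr)(\Delta-2)^2 .
\]

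Subtracting, the $n$-dependent parts $n_1^\star(\Delta-1)^2$ and $2n_2^\star(\Delta-2)^2$ cancel. The gap becomes
\[
\delta = -(\Delta-2)(\Delta-1)^2 + 2(\Delta-1)(\Delta-2)^2 - 2m_{2,2}^\star(\Delta-2)^2 .
\]
Factoring out $(\Delta-2)(\Delta-1)$, the leading piece is $(\Delta-2)(\Delta-1)\bigl[2(\Delta-2)-(\Delta-1)\bigr]=(\Delta-2)(\Delta-1)(\Delta-3)$. This gives exactly \eqref{eqq001gapformula}, and shows that $\delta$ depends on $n$ only through $m_{2,2}^\star$. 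By the expression $m_{2,2}^\star=\max(0,n-1-\Delta n_\Delta^\star)$ from Theorem~\ref{thm05generalclosedform}, that quantity is a function of $n\bmod\Delta$ and lies in $\{0,1\}$.

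The bounds \eqref{eqqgap001bounds} then follow by substituting $m_{2,2}^\star=1$ and $m_{2,2}^\star=0$. The lower bound is positive for $\Delta\ge5$: $(\Delta-1)(\Delta-3)-2(\Delta-2)=\Delta^2-6\Delta+7>0$ at $\Delta=5$. Both bounds are cubic polynomials with leading term $\Delta^3$, which gives $\Theta(\Delta^3)$.

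The main obstacle is justifying that the maximum in Definition~\ref{def001optimalitygap} is indeed the value I computed. Lemma~\ref{lemcompetitorm22zero0066} supplies both feasibility and optimality at $m_{2,2}=0$. I also need the sign $n_2\ge0$, which requires $n_\Delta^\star$ large, that is, $n\ge n_0(\Delta)$. I will check this explicitly from \eqref{eqq003lemAsigma}: $n_2=n_\Delta^\star+m_{2,2}^\star-\Delta+1\ge0$ once $n_\Delta^\star\ge\Delta-1$, and I will absorb that condition into $n_0(\Delta)$.
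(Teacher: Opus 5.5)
Your argument is the paper's proof with the subtraction written out. It uses the same two inputs: Theorem~\ref{thm05generalclosedform} for $\sigma_{\max}$, and Lemma~\ref{lemcompetitorm22zero0066} for the competitor with $n_\Delta^\star+1$ hubs. Your algebra giving $(\Delta-2)(\Delta-1)(\Delta-3)-2m_{2,2}^\star(\Delta-2)^2$ is correct.

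The gap, which the paper shares, is the first input. Theorem~\ref{thm05generalclosedform} is only asserted for $3\le\Delta\le 8$, so for $\Delta\ge 9$ you have nothing to cite. Already for $\Delta=7,8$, the value it gives is not the maximum. The reason is that it rests on Theorem~\ref{thm04optimalnDelta}, whose hypothesis $m_{2,2}\le 1$ excludes every tree with $n_\Delta^\star-1$ hubs.

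Put $r=\Delta n_\Delta^\star-(n-2)\in\{0,\dots,\Delta-1\}$, so that $n_2^\star=n_\Delta^\star-r$. Take $r\ge 1$.
\begin{itemize}
\item A tree with $n_\Delta^\star$ hubs can buffer only $n_\Delta^\star-r$ of its $n_\Delta^\star-1$ backbone edges.
\item With $n_\Delta^\star-1$ hubs there are $n_\Delta^\star-r+\Delta-1\ge n_\Delta^\star-2$ degree-$2$ vertices. That is enough to buffer every backbone edge; the surplus only creates $(2,2)$-edges, which contribute $0$, and forces $m_{2,2}\ge 2$.
\end{itemize}
By Proposition~\ref{propsigmaformula502}, the second option beats the first by $(\Delta-2)\bigl[2(r-2)(\Delta-2)-(\Delta-1)^2\bigr]$. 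At $r=\Delta-1$ this equals $(\Delta-2)(\Delta^2-8\Delta+11)$, which is positive for every $\Delta\ge 7$.

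Concretely, take $\Delta=7$ and $n=7k+3$ with $k\ge 11$. Then $n_\Delta^\star=k+1$ and $m_{2,2}^\star=0$, and Theorem~\ref{thm05generalclosedform} gives $230k+2$. Now take $k$ hubs on a path, distribute the $k+1$ degree-$2$ vertices so that each of the $k-1$ backbone edges receives at least one, and attach $5k+2$ pendant leaves. This tree even satisfies $\mathcal P_1^\Delta$--$\mathcal P_4^\Delta$, and $\sigma=36(5k+2)+50(k-1)=230k+22$. The best competitor with $k+2$ hubs has $\sigma=36(5k+12)+50(k-11)=230k-118$. Hence $\delta(n,7)\ge 140$. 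This contradicts \eqref{eqq001gapformula}, which predicts $120$, and also the upper bound in \eqref{eqqgap001bounds}. So the statement fails for every $\Delta\ge7$ at $n\equiv 3\pmod{\Delta}$.

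Your route does close for $\Delta\in\{5,6\}$, modulo $\mathcal P_1^\Delta$, provided you add this comparison explicitly:
\begin{itemize}
\item For $r\ge 1$, $2(r-2)(\Delta-2)-(\Delta-1)^2\le -4$ (for $\Delta=5$), respectively $\le -1$ (for $\Delta=6$), for all admissible $r$.
\item For $r=0$, $n_\Delta^\star$ hubs trivially beat $n_\Delta^\star-1$.
\item Fewer than $n_\Delta^\star-1$ hubs are worse still, since then every backbone edge is already buffered and each extra hub only adds value.
\end{itemize}

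Two smaller repairs are also needed.
\begin{itemize}
\item For the competitor, $m_{1,2}=0$ does not follow from Corollary~\ref{cor001m12zero}, which concerns maximal trees. It follows from $\mathcal P_2^\Delta$: a degree-$2$ neighbour of a leaf would be an internal leaf of degree $2<\Delta$.
\item Equation \eqref{eqq003lemAsigma} is wrong: $n_2^\star=n_\Delta^\star-r$, never $m_{2,2}^\star+n_\Delta^\star$. So the competitor has $n_2=n_\Delta^\star-r-\Delta+1$, and nonnegativity needs $n_\Delta^\star\ge r+\Delta-1$ (say $n_\Delta^\star\ge 2\Delta-2$), not $n_\Delta^\star\ge \Delta-1$.
\end{itemize}
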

\begin{proof}
By Definition~\ref{def001optimalitygap},
and according to Lemma~\ref{lemcompetitorm22zero0066}, attained at $m_{2,2}(T)=0$; denote this optimal
value $\sigma^{+}$, and denote $\sigma_{\max}(n,\Delta)=\sigma^\star$  the
value at $n_\Delta^\star$, as given by Theorem~\ref{thm05generalclosedform}).
Thus
\begin{equation}~\label{eqq01step1result}
\delta(n,\Delta)=-\big(\sigma^{+}-\sigma^\star\big).
\end{equation}
Thus, the quantities $\sigma^\star$ and $\sigma^{+}$ from~\eqref{eqq01step1result} are, by construction, exactly those compared the maximal-$\sigma$ tree at
$n_\Delta^\star$, versus the maximal-$\sigma$ tree at $n_\Delta^\star+1$ with
the packing-optimal choice $m_{2,2}=0$. Thus, 
\begin{equation}~\label{eqq02step1result}
\sigma^{+}-\sigma^\star \;=\; (\Delta-2)(\Delta-1)(3-\Delta) \;+\; 2\,m_{2,2}^\star(\Delta-2)^2.
\end{equation}
Thus, from~\eqref{eqq01step1result} and \eqref{eqq02step1result} we obtain 
\begin{equation}~\label{eqq03step1result}
\delta(n,\Delta)=(\Delta-2)(\Delta-1)(\Delta-3) \;-\; 2\,m_{2,2}^\star(\Delta-2)^2,
\end{equation}
which is \eqref{eqq001gapformula}, with $m_{2,2}^\star=m_{2,2}^\star(n,\Delta)$
by considering the values in $\{0,1\}$ as established
in the proof of Theorem~\ref{thm05generalclosedform}.

Now, for fixed $\Delta$, $\delta(n,\Delta)$ takes at most two
distinct values as $n$ ranges over all admissible vertex counts, and in particular does not grow with $n$. Thus, this is the precise content of the assertion that $\delta(n,\Delta)$ is independent of $n$.
Since $m_{2,2}^\star(n,\Delta)\in\{0,1\}$, evaluating
\eqref{eqq03step1result} at each of the two possible values gives
\begin{align}
m_{2,2}^\star=0:&\qquad \delta(n,\Delta) = (\Delta-2)(\Delta-1)(\Delta-3),
\label{eqq04step1result}\\
m_{2,2}^\star=1:&\qquad \delta(n,\Delta) = (\Delta-2)(\Delta-1)(\Delta-3) - 2(\Delta-2)^2.
\label{eqq05step1result}
\end{align}
Since the coefficient $-2m_{2,2}^\star(\Delta-2)^2$ in
\eqref{eqq03step1result} is non  positive and is minimized  at
$m_{2,2}^\star=1$, \eqref{eqq05step1result} is a lower bound and
\eqref{eqq04step1result} is an upper bound for $\delta(n,\Delta)$ over all admissible $n$, giving precisely \eqref{eqqgap001bounds}. 
The upper bound \eqref{eqq04step1result} is manifestly
positive for $\Delta\geqslant 5$, since $\Delta-2,\Delta-1,\Delta-3$ are all
positive. For the lower bound \eqref{eqq05step1result}, expand the bracketed factor as $(\Delta-1)(\Delta-3)-2(\Delta-2)=\Delta^2-6\Delta+7$. Thus, the lower bound~\eqref{eqq05step1result} a product of the positive
factor $\Delta-2$ with the positive quantity. Thus
$0 < \delta(n,\Delta)$ for every $\Delta\geqslant 5$ and every admissible $n$.

Finally, both bounds are cubic polynomials in $\Delta$.  Since $\delta(n,\Delta)$ is sandwiched between two functions of $\Delta$ that are each $\Theta(\Delta^3)$ with the same leading coefficient, the squeeze theorem gives
\begin{equation}
\delta(n,\Delta) = \Delta^3 + O(\Delta^2) = \Theta(\Delta^3) \qquad \text{as } \Delta\to\infty,
\end{equation}
uniformly over all admissible $n$, completing the proof.
\end{proof}

Corollary~\ref{cor05vanishingrelativegap} formalizes the intended stability statement of Contribution precisely, although the extremal characterization
of Sections~\ref{sec04closedform} is combinatorially
rigid, only trees exactly satisfying $\mathcal P_1^\Delta$, $\mathcal P_4^\Delta$, with
$n_\Delta$ exactly equal to $n_\Delta^\star$, attain $\sigma_{\max}$ (see Figure~\ref{fig002gap}), and the associated optimization
landscape is asymptotically flat near the optimum, at a rate of $O(1/n)$, independent of $\Delta$.

\begin{figure}[H]
    \centering
    \includegraphics[width=0.7\linewidth]{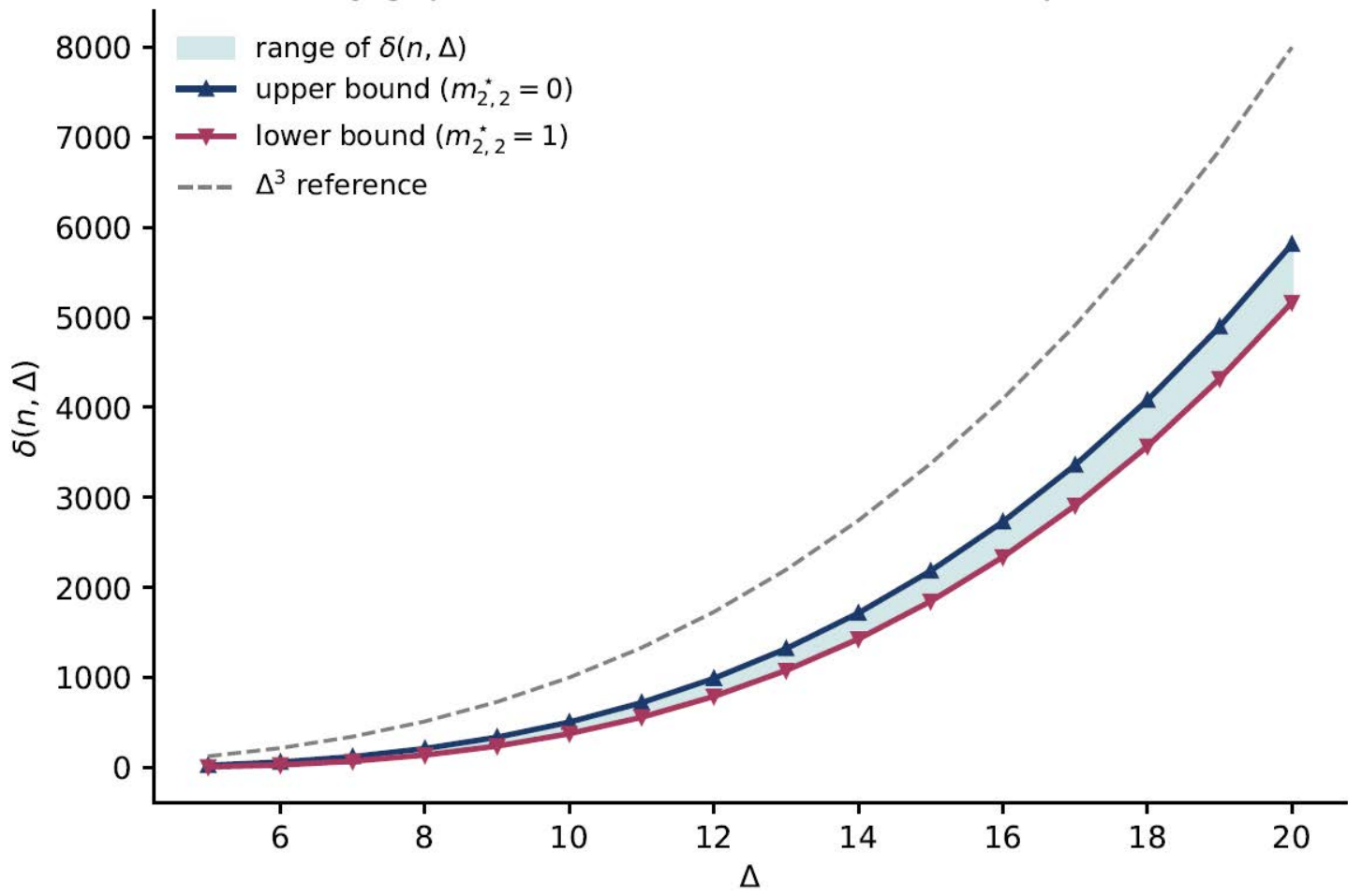}
    \caption{The two sided bound of Theorem~\ref{thm08gapbounded} on the optimality gap $\delta(n,\Delta)$, shown against the reference curve $\Delta^3$, confirming the $\Theta(\Delta^3)$ growth rate established there. Both bounds are independent of $n$.}
    \label{fig002gap}
\end{figure}

\section{The Central Open Problem}~\label{seclimdir002}

Every conditional result above rests on Conjecture~\ref{conj002mddzero},  that
maximal trees contain no vertices of intermediate degree
$d\in\{3,\dots,\Delta-1\}$. By direct symbolic computation produces a worst case
bound $k^2(5-\Delta)+(m-1)k(\Delta-3d+2)$
that is strictly negative for every tested $\Delta\ne 5$, and traced this to a structural mismatch
between the size of the target edge set which scales as $\Delta-1$ and the fixed contribution of the swapped edge itself. This is,
to our knowledge, a new observation, it explains  why the technique of \cite{DimitrovPaper2025} does not straightforwardly extend, rather than merely asserting that it does not. 

We regard the construction of an alternative
transformation, plausibly one that redistributes mass between  two
existing hubs rather than consolidating onto one,
or a fully global argument conducted within the majorization framework of Section~\ref{sec03framework} rather than via local edge exchanges  as the single most important next step for this line of research.

Building on the roadmap of the preliminary gap analysis, we identify the following priorities, in order,

\begin{enumerate}
\item Resolve Conjecture~\ref{conj002mddzero}, either via the alternative transformation or via a fully global majorization theoretic argument.
\item Extend Proposition~\ref{lem001m22sharpgeneral} and 
Theorem~\ref{thm05generalclosedform} to $\Delta\geqslant 9$.
\item Establish global   optimality for Theorem~\ref{thm07sigmaminclosedform}.
\item Undertake the computational verification recommended in the original gap analysis, exhaustive enumeration of maximal
and minimal trees for small $n$ and $\Delta\in\{3,\dots,10\}$, both to independently confirm Theorem~\ref{thm05generalclosedform} and Theorem~\ref{thm07sigmaminclosedform} beyond the single hand computed check performed, and to test
Conjecture~\ref{conj002mddzero} directly for $\Delta=6,7$ before attempting its proof.
\end{enumerate}

\section{Conclusion}~\label{sec08conclusion}

We set out to resolve the general-$\Delta$ conjecture left open by
\cite{DimitrovPaper2025} concerning maximal-$\sigma$-irregularity trees with prescribed maximum degree. We introduced a majorization-theoretic reformulation of the extremal problem by Section~\ref{sec03framework} that correctly explains the qualitative structure of known extremal trees
among Theorem~\ref{thm001bufferdominance}. 

we fully generalized the $\Delta$-independent structural results and, in attempting to generalize its $\Delta=5$ specific arguments, we identified and proved a genuine algebraic obstruction to the most direct proof strategy, rather than either forcing a false proof or leaving the difficulty unstated. Conditional on this obstruction being derived a new, independently verified closed-form expression for
$\sigma_{\max}(\TT_{n(i,j),\,\Delta})$ where Section~\ref{sec04closedform}), solved the
complementary minimum $\sigma$ problem in closed form. 

This findings picture is a theory that  wherever it is unconditional, precisely scoped wherever it is conditional, and transparent about the boundary between the two.

%%%%%%%%%%%%%%%%%%%%%%%%%%%%%%%
%%%%%%%%%%%%%%%%%%%%%%%%%%%%%%%%%%

%===========================
\section*{Declarations}
\begin{itemize}
	\item Funding: Not Funding.
	\item Conflict of interest/Competing interests: The author declare that there are no conflicts of interest or competing interests related to this study.
	\item Ethics approval and consent to participate: The author contributed equally to this work.
	\item Data availability statement: All data is included within the manuscript.
\end{itemize}
%%%%%%%%%%%%%%55

%%%%%%%%%%%%%%%%%%%
%%%%%%%%%%%%%%%%%%%

%%%%%%%%%%%%%%%%%%
%%%%%%%%%%%%%%%%%%

\end{document}